%
%
%
%
%
%
\RequirePackage{fix-cm}
\documentclass[smallextended]{svjour3}       
\smartqed  
\usepackage{graphicx}
\usepackage{amsfonts}
\usepackage{amsmath}
\usepackage{mathrsfs}
\usepackage{cite}
\newcommand{\ucite}[1]{{\cite{#1}}}
\newtheorem{thm}{Theorem}
  \newtheorem{defn}[thm]{Definition}
  \newtheorem{lem}[thm]{Lemma}
  
  \newtheorem{prop}[thm]{Proposition}
  \newtheorem{rem}[thm]{Remark}


%
%
%
%
\begin{document}

\title{Regularity of solutions to time-harmonic Maxwell's system with various lower than Lipschitz coefficients \thanks{The research of Basang Tsering-xiao was supported in part by the National Natural Science Foundation of China(NSFC)grant (11261054), and partly by the Fok Ying Tung Education Foundation grant (151102). The research of Wei Xiang was supported in part by the Research Grants Council of the HKSAR, China
(Project No. CityU 21305215, Project No. CityU 11332916, Project No. CityU
11304817 and Project No. CityU 11303518).}
}

\titlerunning{Regularity of solutions with low regularity coefficients}        

\author{Basang Tsering-xiao     \and
        Wei Xiang 
}


\institute{B. Tsering-xiao \at
              Mathematical Department of Tibet University, Lhasa, Tibet, P.R.China. 850000 \\
              Tel: +86-0891-6405210\\
              \email{basangtu@qq.com}           
           \and
           W. Xiang \at
              Department of Mathematics, City University of Hong Kong, Kowloon, Hong Kong, P.R. China.\\
             \email{weixiang@cityu.edu.hk}  
}

\date{Received: date / Accepted: date}

\maketitle

\begin{abstract}
In this paper, we study the regularity of the solutions of Maxwell's equations in a bounded domain. We consider several different types of low regularity assumptions to the coefficients which are all less than Lipschitz. We first develop a new approach by giving $\mathcal{H}^1$ estimate when the coefficients are $\mathcal{L}^{\infty}$ bounded; and then we derive $\mathcal{W}^{1,p}$ estimates for every $p > 2$ when one of the leading coefficients is simply continuous; Finally, we extend the result to $\mathcal{C}^{1,\alpha}$ almost everywhere for the solution of the homogeneous Maxwell's equations when the coefficients are $\mathcal{W}^{1,p}, \, p>3$ and close to the identity matrix in the sense of $\mathcal{L}^{\infty}$ norm.
 The last two estimates are new, and the techniques and methods developed here can also be applied to other problems with similar difficulties.
\keywords{Maxwell's equations \and Regularity theory \and Anisotropic }
\end{abstract}

\section{Introduction}
\label{intro}

Let $\Omega \subset \mathbb{R}^3$ be an open bounded domain with $C^{1,1}$ boundary.
The time-harmonic electromagnetic field $(E,H)\in \mathcal{H}_{\textrm{loc}}\left(\textrm{curl};\Omega\right)$,
where for any domain $\Omega$ we define
$$
\mathcal{H}_{\textrm{loc}}\left(\textrm{curl};\Omega \right):=
\left\{
u\in \mathcal{L}_{\textrm{loc}}^{2}\left(\Omega\right)
\textrm{ such that }
\nabla\wedge u\in \mathcal{L}_{\textrm{loc}}^{2}\left(\Omega\right)
\right\},
$$
satisfies Maxwell's equations

\begin{equation} \label{eq0}
\begin{cases}
\nabla\wedge{E}  -i\,\omega\mu_{0}\mu(x)\,{H}
                         & = J_m \textrm{ in }\Omega\,,\\
\nabla\wedge{H}  +i\,\omega\varepsilon_{0}\varepsilon(x)\,{E}
                         & = J_e \textrm{ in }\Omega\,, \\
E\wedge\boldsymbol{\nu}  &= G \wedge \boldsymbol{\nu}  \textrm{ on }\partial\Omega\,,
\end{cases}
\end{equation}
where $\varepsilon$ and $\mu$ are real matrix-valued functions in
$\mathcal{L}^{\infty}\left(\mathbb{R}^{3}\right)^{3\times3}$, the terms $J_m,J_e \in \mathcal{L}^2(\Omega)$ are current sources and the boundary condition $G$ is in $\mathcal{H}_{\textrm{loc}}(\textrm{curl};\Omega)$.
We assume that $\varepsilon^{-1}(x)$ and $\mu^{-1}(x)$ are real, uniformly positive definite and bounded, that is , there exist $0<\lambda_1\le\lambda_2<\infty$ and $0<\lambda_3\le\lambda_4<\infty$ such that for all $\xi\in \mathbb{R}^3$ and almost every $x \in \mathbb{R}^3$,
\begin{equation}
\begin{aligned}
\lambda_1|\xi|^2\le\varepsilon^{-1}(x)\xi \cdot \xi  & \le \lambda_2 |\xi|^2,  \label{cond1}\\
\lambda_3|\xi|^2\le\mu^{-1}(x)\xi \cdot \xi & \le \lambda_4 |\xi|^2.
\end{aligned}
\end{equation}

The goal of this paper is to study the regularity of the electromagnetic fields with low regularity assumptions on the material parameters. The material parameters $(\varepsilon(x), \mu(x))$ which are the matrix-valued coefficients of Maxwell's equations, and the importance of the low regularity assumptions on the coefficients lies in consideration of the corresponding medium with complicated structure, such as liquid crystals.  This research topic is originally motivated by the study of the electromagnetic inverse problems for liquid crystals, see \ucite{basang} in which we study the molecular structure of liquid crystals using the optical measurement which is represented by the solutions of Maxwell's equations. The question of whether the data measured is well understood in the sense of the object observed is of paramount importance in this context.

There is some work have been done recently in this consideration;  Capdeboscq and Tsering-xiao\ucite{ct} studied the reconstruction of the coefficients of anisotropic Maxwell's equations using the optical measurement of a laser beam.  Okaji \ucite{ot} and Colton \ucite{cd} studied the uniqueness of the solutions of Maxwell's equations when the coefficients are smooth, and Nguyen and Wang \ucite{ntwj}, and Ball, Capdeboscq and Tsering-xiao{\ucite{bct}} studied the uniqueness when the coefficients hold low regularity assumptions.

Many people, see{\ucite{vgpr, pm, gw,y,mssv}}, have studied the regularity of the solutions of Maxwell's equations. In the case of the high regularity assumptions on the coefficients, Leis\ucite{lei} established well-posedness in $\mathcal{H}^1(\Omega)$ when the coefficients are smooth matrices where $\mathcal{H}^1(\Omega)$ is the Sobolev space and later Costabel \ucite{cost} and Fernandes \ucite{pfmo} showed the $\mathcal{C}^1$ regularity for $\mathcal{C}^{1,1}$ domain. However, Alberti and Capdeboscq\ucite{gayc} is the only work so far that studied the regularity of the solutions with low regularity assumptions on the coefficients. In this paper, we extend the regularity results of \ucite{gayc} in the case of real coefficients. Specifically, we work on the system of Maxwell's equations instead of studying the deduced scalar elliptic equations of second order. We first develop the approach through giving the $\mathcal{H}^1$ estimates for the solutions when the coefficients are bounded; Based on the $\mathcal{H}^1$ estimates, we derive $\mathcal{W}^{1,p}$ estimates of $H$ for every $p > 2$  when the coefficients $\varepsilon$ is simply continuous and $\mu \in \mathcal{W}^{1,p}$ for some $ p > 3$; Finally, we obtain that the solution $H$ of the homogeneous Maxwell's equations is $\mathcal{C}^{1+\alpha}$ almost everywhere when $\varepsilon \in C^{\beta}$, $\mu \in \mathcal{W}^{1,p}, \, p>3$ , and the difference between $\mu$ and the identity matrix is small in the sense of $\mathcal{L}^{\infty}$ norm. 

Based on the regularity results of $H$ and taking into account the symmetric structure of the equations, we can easily derive the interior estimates of the solution $E$. However, the boundary conditions of $E$ and $H$ are different, so it is necessary to use a different approach to derive the boundary estimates for $E$. By developing a different approach for the boundary estimates of $E$, we obtain the same regularity for $E$ and $H$ if we give the same assumptions on the coefficients $\varepsilon$ and $\mu$ respectively.

Section 3 gives the proofs of the results given in Section 2. Each of the proofs includes of two main parts, of which the first part consists of the proofs corresponding to the homogeneous case, and the second part is the proof of each theorem given in Section 2.

\section{Main results}

We investigate the regularity of the solutions of the equation (\ref{eq0}) with inhomogeneous boundary conditions. Without loss of the generality, we will first study the regularity of the solution $H$, since the regularity of $E$ inside the domain can be studied similarly.
By \eqref{eq0}, $H$ satisfies the following equation  

\begin{equation}\label{equ:3.2}
\nabla\wedge(\varepsilon^{-1}(x)\nabla\wedge H)=k_1^2\mu(x)H-k_2J_m+\nabla\wedge(\varepsilon^{-1}(x)J_e)\,,
\end{equation}
and
\begin{equation}\label{equ:3.3}
{\rm div}(\mu(x)H)=k_3{\rm div} J_m\,,
\end{equation}
where the constants are given by $k_1^2=\omega^2\varepsilon_0\mu_0$, $k_2=i\omega\varepsilon_0$, and $k_3=i/(\omega\mu_0)$.  The boundary conditions of $H$ can be written as follows
\begin{equation}\label{equ:2.3x}
(\varepsilon^{-1}\nabla\wedge H)\wedge \boldsymbol{\nu}=(\varepsilon^{-1}J_e)\wedge\boldsymbol{\nu}-k_2 G\wedge\boldsymbol{\nu}\qquad\mbox{on }\partial\Omega.
\end{equation}
Then by applying (3.52) in \cite{pm}, {\it i.e.}, 
\begin{equation}
{\rm div}_{\partial\Omega}(E\wedge\boldsymbol{\nu})=(\nabla\wedge E)\cdot\boldsymbol{\nu}, 
\end{equation}
then we can get the second boundary condition for $ H $,
\begin{equation}\label{equ:2.4x}
(\mu H)\cdot\boldsymbol{\nu}=k_3J_m\cdot\boldsymbol{\nu}-k_3(\nabla\wedge G)\cdot\boldsymbol{\nu} \qquad\mbox{on }\partial\Omega.
\end{equation}


\begin{thm}[$\mathcal{H}^1$ regularity]\label{thm:jejm}
Let $\mu(x)\in \mathcal{W}^{1,3+\delta}(\Omega)^{3\times 3}$, where $\delta>0$. Suppose $\varepsilon^{-1}(x) \in \mathcal{L}^{\infty}(\Omega)^{3\times 3}$ and satisfies \eqref{cond1},$J_m \in {\mathcal{W}^{1,2}(\Omega)}$ $J_e \in \mathcal{L}^2(\Omega)$ and $G \in \mathcal{H}_{\textrm{loc}}(\textrm{curl};\Omega)$. If $H$ is a weak solution of equations \eqref{equ:3.2} and \eqref{equ:3.3}, with the boundary conditions \eqref{equ:2.3x} and \eqref{equ:2.4x}, then $H\in \mathcal{H}^1(\Omega)$ and
$$
\begin{aligned}
||H||_{\mathcal{H}^{1}(\Omega)}
\leq& C(||\nabla\wedge G||_{\mathcal{L}^2(\Omega)}+||J_m||_{\mathcal{W}^{1,2}(\Omega)}\\
 &~~~+||H||_{\mathcal{L}^2(\Omega)}+||J_e||_{\mathcal{L}^2(\Omega)}+||(J_m-\nabla\wedge G)\cdot\boldsymbol{\nu}||_{\mathcal{H}^{\frac{1}{2}}(\partial\Omega)}).
\end{aligned}
$$
\end{thm}

\begin{rem}
The result of Theorem \ref{thm:jejm} is same with the result addressed in Theorem 1 in \ucite{gayc}; however, we derive the result by developing a new approach. We obtain the subsequent new regularity results given in the theorems below based on the approach developed in this theorem.
\end{rem}
Based on the $\mathcal{H}^{1}$-regularity, by applying the Stampacchia interpolation theorem, we derive the $\mathcal{W}^{1,p}$ estimate of $H$ for every $p \geq 2$ when the coefficient $\varepsilon^{-1}(x)$ is simply continuous.

\begin{thm}[$\mathcal{W}^{1,p}$ regularity]\label{thm:inw1p}
Let $\mu(x)\in \mathcal{W}^{1,3+\delta}(\Omega)^{3\times 3}$, where $\delta>0$. Suppose $\varepsilon^{-1}(x)\in \mathcal{C}^0(\Omega)^{3\times3}$ is a positive definite matrix,  $J_m \in {\mathcal{W}^{1,p}(\Omega)}$ $J_e \in \mathcal{L}^p(\Omega)$ and $G \in \mathcal{H}_{\textrm{loc}}(\textrm{curl};\Omega)$. If $H$ is a weak solution of equations \eqref{equ:3.2} and \eqref{equ:3.3}, with the boundary conditions \eqref{equ:2.3x} and \eqref{equ:2.4x}, then $H\in \mathcal{W}^{1,p}(\Omega)$ for every $p \geq 2$ and
$$
\begin{aligned}
||H||_{\mathcal{W}^{1,p}(\Omega)}
\leq& C(||\nabla\wedge G||_{\mathcal{L}^p(\Omega)}+||J_m||_{\mathcal{W}^{1,p}(\Omega)}\\
 &~~~+||H||_{\mathcal{L}^2(\Omega)}+||J_e||_{\mathcal{L}^p(\Omega)}+||(J_m-\nabla\wedge G)\cdot\boldsymbol{\nu}||_{\mathcal{W}^{1-\frac{1}{p},q}(\partial\Omega)}),
\end{aligned}
$$
where $\frac{1}{p}+\frac{1}{q}=1$.
\end{thm}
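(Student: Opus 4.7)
The plan is to promote the $\mathcal{H}^1$ bound of Theorem~\ref{thm:jejm} to a $\mathcal{W}^{1,p}$ bound by exploiting the continuity of $\varepsilon^{-1}$ through a freezing-of-coefficients argument combined with the Stampacchia interpolation theorem. Using the identity $\nabla\wedge\nabla\wedge H = -\Delta H + \nabla(\mathrm{div}\,H)$ together with \eqref{equ:3.3}, the system \eqref{equ:3.2}--\eqref{equ:3.3} rewrites as a second-order elliptic system for $H$ whose principal part has the continuous leading coefficient $\varepsilon^{-1}$, and whose lower-order terms involve $\nabla\mu$; the latter are controlled at any $\mathcal{L}^p$ level thanks to $\mu\in\mathcal{W}^{1,3+\delta}$ and Sobolev embedding into $\mathcal{L}^\infty$.

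First I would cover $\overline{\Omega}$ by finitely many balls $B(x_j,r)$ of radius $r$ small enough that, by uniform continuity of $\varepsilon^{-1}$, the oscillation of $\varepsilon^{-1}$ on each ball is less than a prescribed $\eta>0$. With a subordinate partition of unity $\{\chi_j\}$, the localized field $\chi_j H$ satisfies on each ball a constant-coefficient Maxwell-type system with source $F_j$ made of the right-hand sides of \eqref{equ:3.2}--\eqref{equ:3.3} and of commutators with $\chi_j$, plus a perturbation term $\nabla\wedge\bigl((\varepsilon^{-1}(x_j)-\varepsilon^{-1})\nabla\wedge(\chi_j H)\bigr)$ whose $\mathcal{L}^p$-norm is bounded by $C\eta\,\|\nabla H\|_{\mathcal{L}^p}$. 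Applying the Calder\'on--Zygmund / $\mathcal{W}^{1,p}$ theory for the constant-coefficient curl-curl system, valid at every $p\in(1,\infty)$, and choosing $\eta$ small enough to absorb the perturbation into the left-hand side, I obtain a local $\mathcal{W}^{1,p}$ estimate.

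Next I would patch the local estimates and extend to arbitrary $p\ge 2$ via the Stampacchia interpolation theorem. The solution map sending the data tuple $(J_m,J_e,G\wedge\boldsymbol{\nu})$ to $\nabla H$ is bounded at $p=2$ by Theorem~\ref{thm:jejm}, and the freezing step makes it bounded in a neighborhood of $p=2$; iterating the freezing/absorption procedure and interpolating then yields the estimate at every $p\in[2,\infty)$. For the boundary estimate I would flatten $\partial\Omega$ using its $C^{1,1}$ regularity and repeat the freezing procedure on the half-space: the tangential condition \eqref{equ:2.3x} and the normal condition \eqref{equ:2.4x} translate into mixed boundary conditions for the constant-coefficient half-space problem, whose $\mathcal{W}^{1,p}$ theory is classical. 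The boundary datum $(J_m-\nabla\wedge G)\cdot\boldsymbol{\nu}$ then naturally enters in $\mathcal{W}^{1-\frac{1}{p},q}(\partial\Omega)$ through the standard trace estimates.

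The principal obstacle is the mixed curl/divergence structure of the Maxwell system: it is not strictly elliptic in the classical sense, and the freezing perturbation must be made to respect the divergence constraint \eqref{equ:3.3} when the error is absorbed, which couples the two scalar-looking pieces of the estimate. In parallel, the flattening near $\partial\Omega$ has to preserve both the tangential and the normal boundary conditions simultaneously, so that the constant-coefficient reference problem admits a $\mathcal{W}^{1,p}$ theory with the correct data spaces; ensuring that the oscillation parameter $\eta$ can be chosen uniformly across interior and boundary balls is the most delicate point.
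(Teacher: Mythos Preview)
Your high-level strategy---freeze $\varepsilon^{-1}$ on small balls, apply constant-coefficient $\mathcal{W}^{1,p}$ theory via Stampacchia interpolation, absorb the oscillation term, bootstrap from $p=2$ upward, and flatten the boundary---is exactly the backbone of the paper's proof. The paper first establishes the homogeneous case (Proposition~\ref{thm:2}) and then adapts to inhomogeneous data by tracking the extra source terms, precisely as you outline.

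However, the obstacle you flag in your last paragraph is real and you do not resolve it. The frozen operator $\nabla\wedge(\varepsilon^{-1}(x_0)\nabla\wedge\,\cdot\,)$ is \emph{not} an elliptic system on its own (its kernel contains all gradients), so there is no off-the-shelf ``Calder\'on--Zygmund theory for the constant-coefficient curl-curl system'' you can invoke without explaining how the divergence constraint is enforced. Your proposed route through $\nabla\wedge\nabla\wedge H=-\Delta H+\nabla(\mathrm{div}\,H)$ does not close cleanly either: after freezing $\varepsilon^{-1}$ the cross terms do not reduce to this identity, and the divergence relation you would need to substitute is $\mathrm{div}(\mu H)=k_3\,\mathrm{div}\,J_m$, which brings in $\nabla\mu\cdot H$ and spoils the clean Laplacian form for the localized field $\chi_jH$.

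The paper's remedy is the Helmholtz decomposition $H=\nabla\varphi+\nabla\wedge A$, applied \emph{after} the freezing step for the curl. The curl equation yields an $\mathcal{L}^q$ bound on $\nabla\wedge(\eta H)$, hence a $\mathcal{W}^{1,q}$ bound on $\nabla\wedge A$ (since $\nabla\wedge(\nabla\wedge A)=\nabla\wedge H$ and $\mathrm{div}(\nabla\wedge A)=0$). Separately, $\varphi$ solves the scalar elliptic equation $\mathrm{div}(\mu\nabla\varphi)=-\mathrm{div}(\mu\nabla\wedge A)+k_3\,\mathrm{div}\,J_m$, to which one applies a second freezing (now of $\mu$, which is continuous since $\mu\in\mathcal{W}^{1,3+\delta}$) to get the $\mathcal{W}^{1,q}$ bound on $\nabla\varphi$. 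The two pieces together give $\nabla H\in\mathcal{L}^q$, and one bootstraps $2\to 2^*=6\to\infty$. This decoupling is the missing ingredient in your plan; once you insert it, the rest of your outline (partition of unity, boundary flattening, trace control of $(J_m-\nabla\wedge G)\cdot\boldsymbol{\nu}$ in $\mathcal{W}^{1-1/p,q}$) matches the paper.
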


\begin{rem}
To produce the same regularity result, we assume that $\varepsilon^{-1}(x)$ is simply continuous and $\mu(x)\in \mathcal{W}^{1,3+\delta}(\Omega)^{3\times 3}$ while Theorem 3 in \ucite{gayc} assumes that both $\mu(x)$ and $\varepsilon(x)$ are in $\mathcal{W}^{1,3+\delta}(\Omega)^{3\times 3}$.
\end{rem}

Now we extend the results and give $\mathcal{C}^{1,\alpha}$-regularity of $H$ almost everywhere provided that $\varepsilon^{-1}(x)$ is H\"{o}lder continuous.
\begin{thm}[$\mathcal{C}^{1,\alpha}$ regularity]\label{thm:15}
Let $\mu(x)\in \mathcal{W}^{1,3+\delta}(\Omega)^{3 \times 3}$, where $\delta>0$. Suppose there exits $\delta_0>0$ sufficiently small such that $||\mu(x)-I||_{\mathcal{L}^{\infty}(\Omega)^{3 \times 3}} < \delta_0$ where $I$ is the $3\times3$ identity matrix, and $k_1$ is not a Maxwell eigenvalue. Suppose $\varepsilon^{-1}(x)\in \mathcal{C}^{\beta}(\Omega)^{3\times3}$, $0< \beta < 1$, and $H$ is a weak solution of equations \eqref{equ:3.2} and \eqref{equ:3.3} with $J_m=J_e=0$, then there exists an open set $\Omega_h\subset\Omega$ and $\alpha \in (0,1)$, such that $meas(\Omega\backslash\Omega_h)=0$,     and for any $x_0\in\Omega_h$, there exists $r>0$ such that
$$
\begin{aligned}
||H||_{\mathcal{C}^{1,\alpha}(B_r(x_0))}
\leq& C(1
 +||H||_{\mathcal{L}^2(\Omega)}),
\end{aligned}
$$
where $B_r(x_0)$ is the ball centering at $x_0$ with radius $r>0$.
\end{thm}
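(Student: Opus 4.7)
The plan is to build on the $\mathcal{W}^{1,p}$ regularity of Theorem \ref{thm:inw1p} and upgrade it to pointwise $\mathcal{C}^{1,\alpha}$ regularity by a freeze-and-compare argument of Campanato type, carried out at Lebesgue points of $\nabla\mu$. First, applying Theorem \ref{thm:inw1p} with some $p>3$ yields $H\in\mathcal{W}^{1,p}(\Omega)$; by Sobolev embedding in three dimensions both $H$ and $\mu$ are then H\"older continuous, while $\nabla\mu$ only belongs to $\mathcal{L}^{3+\delta}$. Take $\Omega_h$ to be the open union of balls around interior Lebesgue points of $\nabla\mu$ on which the comparison estimate described below is effective; since almost every point of $\Omega$ is a Lebesgue point, $\mathrm{meas}(\Omega\setminus\Omega_h)=0$.

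Fix $x_0\in\Omega_h$ and a small ball $B_r(x_0)\subset\Omega$, and set $\mu_0:=\mu(x_0)$, $A_0:=\varepsilon^{-1}(x_0)$. Define $\tilde H$ on $B_r(x_0)$ as the solution of the frozen constant-coefficient Maxwell system
\begin{equation*}
\nabla\wedge(A_0\,\nabla\wedge\tilde H)-k_1^2\mu_0\tilde H=0,\qquad \mathrm{div}(\mu_0\tilde H)=0,
\end{equation*}
with Maxwell-trace data matched to $H$ on $\partial B_r(x_0)$. Since $k_1$ is not a Maxwell eigenvalue and $\mu_0$ lies $\mathcal{L}^{\infty}$-close to the identity, this system is uniformly elliptic and well-posed for small $r$, and $\tilde H$ enjoys classical interior smoothness, so it satisfies the standard decay
\begin{equation*}
\int_{B_{\tau r}(x_0)}\bigl|\nabla\tilde H-(\nabla\tilde H)_{B_{\tau r}}\bigr|^2\,dx\leq C\tau^{5}\int_{B_r(x_0)}|\nabla\tilde H|^2\,dx,\qquad \tau\in(0,1).
\end{equation*}

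The difference $w:=H-\tilde H$ solves the analogous curl-div system with source terms built from $(\varepsilon^{-1}-A_0)\nabla\wedge H$, $(\mu-\mu_0)H$ and $\mathrm{div}((\mu-\mu_0)H)$. The $\mathcal{C}^{\beta}$-regularity of $\varepsilon^{-1}$ yields $\|\varepsilon^{-1}-A_0\|_{\mathcal{L}^{\infty}(B_r)}\leq Cr^{\beta}$, and the Lebesgue-point property at $x_0$ gives quantitative control of $\mu-\mu_0$ and $\nabla\mu-\nabla\mu(x_0)$ on $B_r(x_0)$. An energy estimate for $w$, in which the hypothesis $\|\mu-I\|_{\mathcal{L}^{\infty}}<\delta_0$ is used to absorb the troublesome divergence coupling into the left-hand side, then yields $\|w\|_{\mathcal{H}^1(B_r)}=O(r^{\eta})$ for some $\eta>0$. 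Combined with the decay of $\tilde H$, this produces a Campanato-type inequality
\begin{equation*}
\int_{B_{\tau r}(x_0)}|\nabla H-(\nabla H)_{B_{\tau r}}|^2\,dx\leq C\bigl(\tau^{5}+\tau^{-3}r^{2\eta}\bigr)\int_{B_r(x_0)}|\nabla H|^2\,dx,
\end{equation*}
and a standard iteration (choose $\tau$ small, then $r$ small) followed by the Campanato characterization of H\"older spaces yields $\nabla H\in\mathcal{C}^{0,\alpha}(B_{r/2}(x_0))$ for some $\alpha\in(0,1)$, which is the claimed bound.

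The main obstacle is that the divergence constraint $\mathrm{div}(\mu H)=0$ is not the standard $\mathrm{div}\,H=0$: freezing $\mu$ introduces an error $\mathrm{div}((\mu-\mu_0)H)$ that couples nontrivially with the curl equation, and $\nabla\mu$ is only in $\mathcal{L}^{3+\delta}$, so H\"older continuity of $\mu$ alone does not control this term. The smallness assumption $\|\mu-I\|_{\mathcal{L}^{\infty}}<\delta_0$ is therefore not merely qualitative: it is needed both to guarantee uniform ellipticity of the frozen Maxwell system with $\mu_0\approx I$, and to absorb the divergence-coupling error into the energy estimate for $w$. The restriction to Lebesgue points of $\nabla\mu$ is what forces the conclusion to hold only on a set of full measure rather than everywhere.
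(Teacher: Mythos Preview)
Your proposal follows essentially the same freeze-and-compare Campanato strategy as the paper: compare $H$ on $B_r(x_0)$ with the solution of the frozen constant-coefficient Maxwell system, use the smallness $\|\mu-I\|_{\mathcal{L}^\infty}<\delta_0$ to absorb the divergence-coupling error in the energy estimate for the difference, then iterate to a Campanato bound on $\nabla H$. The only notable differences are technical: the paper characterizes $\Omega_h$ not via Lebesgue points of $\nabla\mu$ but via the pointwise decay conditions $\|\nabla H\|_{\mathcal{L}^2(B_R(x_0))}^2<CR^{\nu}$ and $\|\nabla W\|_{\mathcal{L}^2(B_R(x_0))}^2<CR^{1+\alpha}$ (handled by Lebesgue differentiation applied to $|\nabla H|^2$), and it runs a two-stage iteration---first a Morrey-type bound $\int_{B_r}|\nabla H|^2\leq C(r/R)^{\nu}\int_{B_R}|\nabla H|^2+Cr^{3+\alpha}$, then the Campanato estimate---whereas you sketch a single direct Campanato step.
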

Here we say $k_1$ is a Maxwell eigenvalue if and only if there exist an integer $j$ and a point $x_0\in\Omega$, such that $k_1=e_j$, where $e_j$ is the j$^{\text{th}}$ Maxwell eigenvalue guaranteed by Theorem 4.18 of the book \cite{pm} (which is written as $k_j$ in \cite{pm}), with the coefficients $\varepsilon\equiv\varepsilon(x_0)$ and $\mu\equiv\mu(x_0)$.

\begin{rem}
 The results obtained in Theorem \ref{thm:inw1p} and Theorem \ref{thm:15} are new regularity results for the solution, and the low regularity assumption on the coefficients are important for many application problems, i.e. electromagnetic inverse problems for liquid crystals in which the permeability $\mu$ is the identity matrix and permittivity $\varepsilon$ is necessary to have a low regularity property, see \cite{basang} for more detail.
\end{rem}

\begin{rem}
Theorem \ref{thm:15} can not be extended to the inhomogeneous equations, since we found a counter example for the inhomogeneous equations. One of the simple counterexample can be constructed when $H=(1+\sigma V(x_1),0,0), \mu=\frac{1}{(1+\sigma V(x_1))}I, \varepsilon=I, J_e=(0,0,0), J_m =-i\omega(1,0,0)$, where $V(x_1)$ is the Volterra's function, $\sigma$ is any small positive constant, and $I$ is the $3\times3$ identity matrix.
\end{rem}

Similar to the regularity properties of $H$ given above, we also derive the regularity properties for the electrical field $E$ when the coefficients are given similar assumptions. 

\begin{thm}[Regularity for $E$]\label{thm:E4i}Let $\varepsilon(x)\in \mathcal{W}^{1,3+\delta}(\Omega)^{3 \times 3}$, where $\delta>0$. Suppose the condition \eqref{cond1} holds, and if $E$ is a weak solution of the equations
\eqref{eq0}, then
$E\in \mathcal{H}^1(\Omega)$ and
$$||E||_{\mathcal{H}^1(\Omega)}\le C(||E||_{\mathcal{L}^2(\Omega)}+||\nabla\wedge G||_{\mathcal{L}^2(\Omega)}+||J_e||_{\mathcal{W}^{1,2}(\Omega)}+||J_m||_{\mathcal{L}^2(\Omega)}).$$
Moreover, if $\mu^{-1}(x)\in \mathcal{C}^0(\Omega)^{3\times3}$ and satisfies the condition \eqref{cond1},
then the following inequality holds
\begin{equation}
||E||_{\mathcal{W}^{1,p}(\Omega)}\leq C(||E||_{\mathcal{L}^2(\Omega)}+||\nabla\wedge G||_{\mathcal{L}^p(\Omega)}+||J_e||_{\mathcal{W}^{1,p}(\Omega)}+||J_m||_{\mathcal{L}^{p}(\Omega)}),
\end{equation}
where $p \geq 2$ and the constant $C$ does not depend on the solution $E$;
Suppose $\mu^{-1}(x)\in \mathcal{C}^{\beta}(\Omega)^{3 \times 3}$, and $k_1$ is not a Maxwell eigenvalue; If there exits $\delta_0>0$ small such that $||\varepsilon(x)-I||_{\mathcal{L}^{\infty}(\Omega)^{3 \times 3}} < \delta_0$, and exist an open set $\Omega_h\subset\Omega$ and $\alpha \in (0,1)$, such that $meas(\Omega\backslash\Omega_h)=0$, then for any $x_0\in\Omega_h$ there exists $r>0$ such that the solution $E$ of the homogeneous Maxwell's equations $(J_m=J_e=0)$  satisfies that
\begin{equation}
||E||_{\mathcal{C}^{1,\alpha}(B_r(x_0))}\le C(1+||E||_{\mathcal{L}^2(\Omega)}) 
\end{equation}
where $B_r(x_0)$ is the ball centering at $x_0$ with radius $r>0$.
\end{thm}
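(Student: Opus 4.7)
The plan is to exploit the structural symmetry between the $E$ and $H$ equations in \eqref{eq0}. Eliminating $H$ from the second equation of \eqref{eq0} via $H=(i\omega\mu_0)^{-1}\mu^{-1}(\nabla\wedge E - J_m)$ yields
\begin{equation*}
\nabla\wedge(\mu^{-1}(x)\nabla\wedge E) = k_1^2\,\varepsilon(x)\, E + \nabla\wedge(\mu^{-1}(x)J_m) + i\omega\mu_0 J_e,
\end{equation*}
together with the divergence relation $\mathrm{div}(\varepsilon E) = (i\omega\varepsilon_0)^{-1}\mathrm{div}\,J_e$. This system is formally identical to \eqref{equ:3.2}--\eqref{equ:3.3} for $H$, with the coefficients $(\varepsilon,\mu)$ and the sources $(J_m,J_e)$ swapped into $(\mu^{-1},\varepsilon)$ and $(-J_e,J_m)$ up to multiplicative constants. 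Consequently, every \emph{interior} step in the proofs of Theorems \ref{thm:jejm}, \ref{thm:inw1p}, and \ref{thm:15} transfers verbatim to $E$. In particular, the $\mathcal{C}^{1,\alpha}$ bound on $B_r(x_0)$ for $x_0\in\Omega_h$ is purely an interior statement and follows directly by rerunning the frozen-coefficient Schauder argument of Theorem \ref{thm:15} on this reformulated $E$-system, using the smallness of $\|\varepsilon-I\|_{\mathcal{L}^\infty}$ in place of $\|\mu-I\|_{\mathcal{L}^\infty}$.

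The nontrivial work is in the global $\mathcal{H}^1$ and $\mathcal{W}^{1,p}$ estimates, since the boundary datum $E\wedge\boldsymbol{\nu}=G\wedge\boldsymbol{\nu}$ is of Dirichlet type on the tangential trace, whereas the analysis of $H$ relied on the Neumann-type conditions \eqref{equ:2.3x}--\eqref{equ:2.4x}. The key device I would use is a lifting of the tangential boundary data: construct $\widetilde{G}\in\mathcal{H}(\mathrm{curl};\Omega)$ with tangential trace equal to $G\wedge\boldsymbol{\nu}$ on $\partial\Omega$ and with $\|\widetilde{G}\|_{\mathcal{L}^p}$, $\|\nabla\wedge\widetilde{G}\|_{\mathcal{L}^p}$ controlled by $\|\nabla\wedge G\|_{\mathcal{L}^p}$ (and similarly for $p=2$). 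Setting $\widehat{E}:=E-\widetilde{G}$ reduces the problem to one with vanishing tangential trace and source terms of the same regularity class, placing $\widehat{E}$ in the natural Maxwell space $\mathcal{H}_0(\mathrm{curl};\Omega)\cap \mathcal{H}(\mathrm{div}_\varepsilon;\Omega)$.

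Once the lifting is in place, the Gaffney-type inequality on a $\mathcal{C}^{1,1}$ domain controls $\|\widehat{E}\|_{\mathcal{H}^1}$ by $\|\nabla\wedge \widehat{E}\|_{\mathcal{L}^2}+\|\mathrm{div}(\varepsilon \widehat{E})\|_{\mathcal{L}^2}+\|\widehat{E}\|_{\mathcal{L}^2}$, the right side of which is directly bounded by the data through the reformulated system. This gives the $\mathcal{H}^1$ estimate. The $\mathcal{W}^{1,p}$ estimate then follows by the same Stampacchia interpolation argument used in the proof of Theorem \ref{thm:inw1p} applied to the lifted unknown $\widehat{E}$, with $\varepsilon(x)$ playing the role of $\mu(x)$ in the earlier analysis; the hypothesis $\varepsilon\in\mathcal{W}^{1,3+\delta}$ (for the $\mathcal{H}^1$ bound) and $\mu^{-1}\in\mathcal{C}^0$ (for the $\mathcal{W}^{1,p}$ bound) are precisely the swapped counterparts of the hypotheses in the corresponding $H$-theorems.

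The main obstacle I anticipate is engineering the lifting $\widetilde{G}$ with simultaneously good control of $\widetilde{G}$, $\nabla\wedge\widetilde{G}$, and (when needed) $\mathrm{div}(\varepsilon\widetilde{G})$ at the correct scaling in $p$, and then bookkeeping the extra terms produced on the right-hand side so that the final estimate involves only the norms listed in the statement and not, say, $\|G\|_{\mathcal{H}^1}$. After this is handled, the remainder is a mechanical translation of the arguments of Theorems \ref{thm:jejm}--\ref{thm:15} under the interchange $\varepsilon\leftrightarrow\mu^{-1}$, $J_m\leftrightarrow -J_e$.
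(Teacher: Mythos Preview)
Your identification of the $\varepsilon\leftrightarrow\mu$ symmetry for the interior estimates and for the $\mathcal{C}^{1,\alpha}$ part is exactly what the paper does, and your remark that the only new work is at the boundary is correct. However, your boundary mechanism is genuinely different from the paper's.

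You propose to lift the tangential datum by some $\widetilde{G}$ and then invoke a weighted Gaffney-type inequality for $\widehat{E}\in\mathcal{H}_0(\mathrm{curl})\cap\mathcal{H}(\mathrm{div}_\varepsilon)$ as a black box. The paper does \emph{not} appeal to a Gaffney inequality for $E$. Instead, after reducing to the homogeneous boundary condition $E\wedge\boldsymbol{\nu}=0$, it flattens the boundary so that $\tilde{E}_1=\tilde{E}_2=0$ on $\{y_3=0\}$, and then derives for each tangential component a \emph{scalar} second-order elliptic equation with Dirichlet data: differentiating $\mathrm{div}(\tilde{\varepsilon}\tilde{E})=0$ in $y_1$ and using the curl relations $\partial_{y_1}\tilde{E}_2=\partial_{y_2}\tilde{E}_1+(\nabla\wedge\tilde{E})_3$, $\partial_{y_1}\tilde{E}_3=\partial_{y_3}\tilde{E}_1-(\nabla\wedge\tilde{E})_2$ gives $\mathrm{div}(\tilde{\varepsilon}\nabla\tilde{E}_1)=-\mathrm{div}(\partial_{y_1}\tilde{\varepsilon}\,\tilde{E}+\tilde{\varepsilon}b)$ with $b=(0,(\nabla\wedge\tilde{E})_3,-(\nabla\wedge\tilde{E})_2)$. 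Standard $\mathcal{H}^1$ (and then $\mathcal{W}^{1,p}$) estimates for this scalar Dirichlet problem control $\nabla\tilde{E}_1$, $\nabla\tilde{E}_2$; the normal component $\tilde{E}_3$ is recovered algebraically from the divergence equation. This is a component-by-component argument that manufactures the needed boundary regularity from the curl and div information directly, without citing an external Gaffney result.

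Your route is cleaner if one is willing to import a weighted Gaffney inequality with $\varepsilon\in\mathcal{W}^{1,3+\delta}$ from the literature; the paper's route is longer but self-contained and makes transparent exactly where the $\mathcal{W}^{1,3+\delta}$ regularity of $\varepsilon$ is consumed (in the $|\nabla\tilde{\varepsilon}|^2|\tilde{E}|^2$ terms, handled by H\"older and Sobolev with the small factor $R^{2\delta/(3+\delta)}$). One caution about your lifting: if you take $\widetilde{G}=G$, the right-hand side of the $\widehat{E}$-system acquires $\varepsilon G$ and $\mathrm{div}(\varepsilon G)$, which would insert $\|G\|$-type norms not present in the stated estimate; you would need to argue, as the paper implicitly does, that these can be absorbed or that an extension with the right control exists.
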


\begin{rem}
Clearly, if we exchange the role of $\varepsilon(x)$ and $\mu(x)$, then the arguments given in the theorems above can be easily applied to derive the regularity of $E$. However $E$ and $H$ hold different boundary conditions in Theorem \ref{thm:jejm} and Theorem \ref{thm:inw1p}, hence we use different methods to prove the boundary estimate of $E$ .
\end{rem}

We combine all the previous results together such that if $\mu(x)$ and $\varepsilon(x)$ are regular enough, then we can have the following general results.
\begin{thm}[Regularity for $(E,H)$]\label{them:10a}
Let $\mu(x), \varepsilon(x)\in \mathcal{W}^{1,3+\delta}(\Omega)^{3\times 3}$, where $\delta>0$. Let $\varepsilon(x)$ and $\mu(x)$ satisfy \eqref{cond1}.
Then the solution $(E,H)$ satisfies the estimate that
$$
\begin{aligned}
||H||_{\mathcal{W}^{1,p}(\Omega)}+&||E||_{\mathcal{W}^{1,p}(\Omega)}
\leq C(||\nabla\wedge G||_{\mathcal{L}^p(\Omega)}+||J_m||_{\mathcal{W}^{1,p}(\Omega)}+||J_e||_{\mathcal{W}^{1,p}(\Omega)}\\
 &+||E||_{\mathcal{L}^2(\Omega)}+||H||_{\mathcal{L}^2(\Omega)}+||(J_m-\nabla\wedge G)\cdot\boldsymbol{\nu}||_{\mathcal{W}^{1-\frac{1}{p},q}(\partial\Omega)}),
\end{aligned}
$$
where $\frac{1}{p}+\frac{1}{q}=1$.
Moreover, we assume $k_1$ is not Maxwell eigenvalue, and there exits $\delta_0>0$ small such that $||\mu(x)-I||_{\mathcal{L}^{\infty}(\Omega)^{3 \times 3}}+||\varepsilon(x)-I||_{\mathcal{L}^{\infty}(\Omega)^{3 \times 3}} < \delta_0$. Then there exists an open set $\Omega_h\subset\Omega$ and $\alpha \in (0,1)$, such that $meas(\Omega\backslash\Omega_h)=0$, and for any $x_0\in\Omega_h$, there exists $r>0$ such that, the solutions $E$ and $H$ of the homogeneous Maxwell's equations  $(J_m=J_e=0)$  satisfy the following estimates
\begin{equation}
||E||_{\mathcal{C}^{1,\alpha}(B_r(x_0))}+||H||_{\mathcal{C}^{1,\alpha}(B_r(x_0))}
\le C(1+||E||_{\mathcal{L}^2(\Omega)}+||H||_{\mathcal{L}^2(\Omega)}
),\nonumber
\end{equation}
where $B_r(x_0)$ is the ball centering at $x_0$ with radius $r>0$.
\end{thm}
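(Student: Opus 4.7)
The plan is to derive Theorem~\ref{them:10a} by bundling together the estimates already established for $H$ in Theorems~\ref{thm:inw1p} and~\ref{thm:15} with the parallel estimates for $E$ furnished by Theorem~\ref{thm:E4i}. The starting observation is that in dimension three the Sobolev embedding $\mathcal{W}^{1,3+\delta}(\Omega)\hookrightarrow\mathcal{C}^{0,\beta}(\overline{\Omega})$ with $\beta=\delta/(3+\delta)>0$ applies to both $\mu$ and $\varepsilon$. Combined with the uniform positive definiteness \eqref{cond1}, this guarantees $\mu^{-1},\varepsilon^{-1}\in\mathcal{C}^{\beta}(\Omega)^{3\times 3}$, and in particular $\mu^{-1},\varepsilon^{-1}\in\mathcal{C}^{0}(\Omega)^{3\times 3}$. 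Thus the regularity hypotheses of all the previous theorems are automatically satisfied.

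For the $\mathcal{W}^{1,p}$ part of the statement, I would apply Theorem~\ref{thm:inw1p} directly to $H$, since its hypotheses (namely $\varepsilon^{-1}\in\mathcal{C}^{0}$ and $\mu\in\mathcal{W}^{1,3+\delta}$) are in force, to obtain the stated bound on $\|H\|_{\mathcal{W}^{1,p}(\Omega)}$. The symmetric version of the same reasoning, furnished by the middle assertion of Theorem~\ref{thm:E4i}, provides the bound on $\|E\|_{\mathcal{W}^{1,p}(\Omega)}$. Adding the two inequalities and using $\|J_e\|_{\mathcal{L}^p}\le \|J_e\|_{\mathcal{W}^{1,p}}$ to fold the right-hand sides into a single expression yields the claimed combined estimate.

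For the $\mathcal{C}^{1,\alpha}$ part, I would first fix $\delta_0$ to be the smaller of the two thresholds required by Theorem~\ref{thm:15} (applied to $H$) and the last part of Theorem~\ref{thm:E4i} (applied to $E$); then the hypothesis $\|\mu-I\|_{\mathcal{L}^{\infty}}+\|\varepsilon-I\|_{\mathcal{L}^{\infty}}<\delta_0$ in particular forces each of $\|\mu-I\|_{\mathcal{L}^{\infty}}$ and $\|\varepsilon-I\|_{\mathcal{L}^{\infty}}$ to be below the respective threshold. Theorem~\ref{thm:15} then produces an open set $\Omega_h^H$ with $\mathrm{meas}(\Omega\setminus\Omega_h^H)=0$ and an exponent $\alpha_1\in(0,1)$ on which the stated pointwise $\mathcal{C}^{1,\alpha_1}$ bound for $H$ holds; Theorem~\ref{thm:E4i} produces the analogous pair $(\Omega_h^E,\alpha_2)$ for $E$. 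Setting $\Omega_h:=\Omega_h^H\cap\Omega_h^E$ and $\alpha:=\min\{\alpha_1,\alpha_2\}$ gives an open set whose complement in $\Omega$ still has zero measure, and at each $x_0\in\Omega_h$ I would take $r$ to be the minimum of the two admissible radii and add the two pointwise estimates.

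The argument is essentially a consolidation: the analytic content sits entirely in the earlier theorems, and the only place where some care is required is the matching of the smallness thresholds and the exceptional sets, both of which are handled by taking minima and intersections as indicated. The most delicate bookkeeping point will be to check that the constants $C$ produced by each application depend only on the structural data ($\Omega$, the ellipticity constants, the $\mathcal{W}^{1,3+\delta}$ norms of $\mu,\varepsilon$, and $k_1$) and not on the specific solution, so that the final combined $C$ in the statement of Theorem~\ref{them:10a} is legitimate; this is clear from an inspection of the earlier proofs but should be mentioned explicitly.
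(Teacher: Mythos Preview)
Your proposal is correct and follows essentially the same route as the paper: invoke the Sobolev embedding $\mathcal{W}^{1,3+\delta}\hookrightarrow\mathcal{C}^{\beta}$ to place $\mu,\varepsilon$ (and their inverses) in H\"older classes, then combine Theorem~\ref{thm:inw1p}/\ref{thm:15} for $H$ with Theorem~\ref{thm:E4i} (the paper cites the homogeneous Proposition~\ref{thm:E4}) for $E$. The only cosmetic difference is that the paper constructs $\Omega_h$ by imposing the Lebesgue-point conditions for $W_H:=H-V_H$ and $W_E:=E-V_E$ simultaneously, while you take the intersection $\Omega_h^H\cap\Omega_h^E$; the two constructions are equivalent since each complement has measure zero.
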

The proofs of the theorems are given in section \ref{sec:4a}.


\section{Proof of the theorems}\label{sec:4a}
\subsection{Proof of Theorem \ref{thm:jejm}}
 In this part, we first study the simpler version of Theorem \ref{thm:jejm}, namely the regularity property of the solution of the homogeneous equations with homogeneous boundary conditions and then we extend the result to the case of Theorem \ref{thm:jejm}. In the homogeneous case, as shown in Proposition \ref{thm:1}, the coefficients are prescribed the same conditions as Theorem \ref{thm:jejm} and we study the derivation of the result in detail. We then give the shorter version of the proof when we apply the approach to the case of Theorem \ref{thm:jejm} and closely study the different part of the argument caused by the inhomogeneous terms and inhomogeneous boundary conditions. 

\begin{prop}\label{thm:1}
Let $\mu(x)\in \mathcal{W}^{1,3+\delta}(\Omega)^{3 \times 3}$, where $\delta>0$.
Suppose $\varepsilon^{-1}(x) \in \mathcal{L}^{\infty}(\Omega)^{3\times 3}$ and that the condition \eqref{cond1} holds. If $H$ is a weak solution of the following equations
\begin{equation}\label{eq1}
\nabla\wedge(\varepsilon^{-1}(x)\nabla\wedge H)=k^2\mu(x)H, \qquad\mbox{in }\Omega
\end{equation}
with the boundary condition $(\varepsilon^{-1}\nabla\wedge H)\wedge \boldsymbol{\nu}=0$ on $\partial\Omega$,
then $H\in \mathcal{H}^1(\Omega)$ and
$$||H||_{\mathcal{H}^1(\Omega)}\le C||H||_{\mathcal{L}^2(\Omega)}.$$
\end{prop}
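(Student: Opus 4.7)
The approach is to apply a Gaffney-type $\mathcal{H}^1$ inequality on the $\mathcal{C}^{1,1}$ domain $\Omega$ to the auxiliary field $B := \mu H$ rather than to $H$ itself. The reason is that $B$ carries exactly the boundary and divergence data such an inequality requires: taking divergence of \eqref{eq1} gives $\mathrm{div}\, B = 0$ in $\Omega$, and combining $(\varepsilon^{-1}\nabla\wedge H)\wedge\boldsymbol{\nu} = 0$ with the surface identity $\mathrm{div}_{\partial\Omega}(u\wedge\boldsymbol{\nu}) = (\nabla\wedge u)\cdot\boldsymbol{\nu}$ applied to $u=\varepsilon^{-1}\nabla\wedge H$ yields $B\cdot\boldsymbol{\nu} = 0$ on $\partial\Omega$. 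Under these conditions the Gaffney estimate reads
$$
\|B\|_{\mathcal{H}^1(\Omega)} \le C\bigl(\|B\|_{\mathcal{L}^2(\Omega)} + \|\nabla \wedge B\|_{\mathcal{L}^2(\Omega)}\bigr),
$$
so the task reduces to controlling $\nabla \wedge B$ in $\mathcal{L}^2$.

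First I would record the energy bound $\|\nabla \wedge H\|_{\mathcal{L}^2(\Omega)} \le C\|H\|_{\mathcal{L}^2(\Omega)}$, obtained by testing \eqref{eq1} against $H$: the boundary integral drops by the tangential condition, and \eqref{cond1} supplies coercivity. Next, three–dimensional Sobolev embedding $\mathcal{W}^{1,3+\delta}(\Omega)\hookrightarrow\mathcal{C}^{\alpha}(\Omega)$ with $\alpha=\delta/(3+\delta)$ gives $\mu\in \mathcal{L}^\infty$. Expanding schematically
$$
\nabla \wedge B = \mu(\nabla \wedge H) + (\nabla \mu)\wedge H,
$$
the first term sits in $\mathcal{L}^2$ by the energy bound, while the second a priori lies only in $\mathcal{L}^{r}$ with $1/r = 1/(3+\delta) + 1/2$, using $\nabla\mu\in\mathcal{L}^{3+\delta}$ and $H\in\mathcal{L}^2$.

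The main obstacle is this apparent circularity: the Gaffney inequality at exponent $2$ wants $\nabla\wedge B \in \mathcal{L}^2$, but the product-rule contribution is subcritical. I would resolve this by a short bootstrap. The $\mathcal{W}^{1,r}$ version of the Gaffney inequality, valid on $\mathcal{C}^{1,1}$ domains for every $r\in(1,\infty)$, gives $B\in\mathcal{W}^{1,r}(\Omega)$, and Sobolev embedding upgrades $H=\mu^{-1}B$ to $\mathcal{L}^{r^*}$ with $1/r^*=1/r-1/3$. This sets up the recursion $1/q_{n+1}=1/q_n-\delta/(3(3+\delta))$ starting from $q_0=2$; since $\delta>0$, $1/q_n$ strictly decreases by a fixed positive amount per step, so after at most $\lceil 3/\delta\rceil$ iterations one reaches $q_n \ge (6+2\delta)/(1+\delta)$, at which point $(\nabla\mu)\wedge H\in\mathcal{L}^2$ and the ordinary Gaffney inequality yields $B\in\mathcal{H}^1(\Omega)$. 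Making this iteration rigorous (independent a priori bounds along a mollification of $\mu$, then passing to the limit) is the technical core of the argument.

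Finally, I would transfer the regularity back to $H=\mu^{-1}B$ through the product rule $\nabla H=(\nabla\mu^{-1})B+\mu^{-1}\nabla B$. Here $\mu^{-1}\in\mathcal{W}^{1,3+\delta}\cap\mathcal{L}^\infty$ because $\mu$ is uniformly positive definite. The second summand is in $\mathcal{L}^2$ directly; for the first, $B\in\mathcal{H}^1\hookrightarrow\mathcal{L}^6$ combined with $\nabla\mu^{-1}\in\mathcal{L}^{3+\delta}$ and the arithmetic inequality $1/(3+\delta)+1/6\le 1/2$ gives $\mathcal{L}^2$ control by H\"older. Tracking the constants through the finite iteration produces the claimed bound $\|H\|_{\mathcal{H}^1(\Omega)}\le C\|H\|_{\mathcal{L}^2(\Omega)}$, with $C$ depending only on $\Omega$, $k$, $\lambda_1,\ldots,\lambda_4$, and $\|\mu\|_{\mathcal{W}^{1,3+\delta}(\Omega)^{3\times 3}}$.
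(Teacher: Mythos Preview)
Your strategy via the Gaffney inequality applied to $B=\mu H$ has a gap that is fatal in the matrix-valued setting: the identity
\[
\nabla\wedge(\mu H)=\mu(\nabla\wedge H)+(\nabla\mu)\wedge H
\]
holds only when $\mu$ is a scalar function. For a matrix-valued $\mu$, the component $(\nabla\wedge(\mu H))_i=\epsilon_{ijk}\partial_j(\mu_{kl}H_l)$ contains, in its leading-order part $\epsilon_{ijk}\mu_{kl}\partial_j H_l$, \emph{all} first partial derivatives of $H$, and this expression cannot be rewritten as a bounded matrix acting on $\nabla\wedge H$. Consequently your energy bound $\|\nabla\wedge H\|_{\mathcal{L}^2}\le C\|H\|_{\mathcal{L}^2}$ does not place $\nabla\wedge B$ in any Lebesgue space, and the bootstrap never gets off the ground: already the first application of the $\mathcal{W}^{1,r}$ Gaffney inequality requires $\nabla\wedge B\in\mathcal{L}^r$, which is exactly the full-gradient information you are trying to produce.

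The paper avoids this obstruction by a Helmholtz decomposition $H=\nabla\varphi+\nabla\wedge A$. The curl part $\nabla\wedge A$ lands in $\mathcal{H}^1$ directly from $\nabla\wedge H\in\mathcal{L}^2$ and ${\rm div}(\nabla\wedge A)=0$. The gradient part is governed by the \emph{scalar} equation ${\rm div}(\mu\nabla\varphi)=-{\rm div}(\mu\nabla\wedge A)$ with conormal boundary data $(\mu\nabla\varphi)\cdot\boldsymbol{\nu}=-(\mu\nabla\wedge A)\cdot\boldsymbol{\nu}$; here $\mu$ appears as the coefficient of a divergence-form elliptic operator rather than inside a curl, and the $\mathcal{W}^{1,3+\delta}$ hypothesis is precisely what standard $\mathcal{H}^2$ theory for such scalar equations needs. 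Your route could in principle be salvaged by quoting a weighted Gaffney inequality $\|u\|_{\mathcal{H}^1}\le C(\|\nabla\wedge u\|_{\mathcal{L}^2}+\|{\rm div}(\mu u)\|_{\mathcal{L}^2}+\|u\|_{\mathcal{L}^2})$ under $(\mu u)\cdot\boldsymbol{\nu}=0$, but proving that inequality for matrix $\mu\in\mathcal{W}^{1,3+\delta}$ is essentially the proposition itself.
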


\begin{proof}[Proof of Proposition \ref{thm:1}] We first show the estimate of $\nabla\wedge H$ which can be proved by the simple energy estimate as follows.

\medskip
{\bf Step 1.} \emph{Estimates of $\nabla\wedge H$.}
Multiply both sides of the equation \eqref{eq1} by $\overline{H}$, and integrate it by parts, we get that
$$\int_{\Omega}\varepsilon^{-1}(x)(\nabla\wedge H)\cdot(\overline{\nabla\wedge H})dx+\int_{\partial\Omega}((\varepsilon^{-1}\nabla\wedge H)\wedge\boldsymbol{\nu})\cdot \overline{H} d\sigma=k^2\int_{\Omega}(\mu H)\cdot \overline{H}dx.$$
Notice that $(\varepsilon^{-1}\nabla\wedge H)\wedge\boldsymbol{\nu}=E\wedge\boldsymbol{\nu}=0$ on $\partial\Omega$, and by applying the condition \eqref{cond1}, we obtain that
\begin{equation}\label{curlH}
\int_{\Omega}|\nabla\wedge H|^2dx \le C\int_{\Omega}|H|^2dx.
\end{equation}

\medskip
{\bf Step 2.} \emph{Helmholtz decomposition}.
From Lemma \ref{lem16} which listed in the Appendix ( see for more detail of the lemma in Amrouche, Seloula \cite{acsn} and Amrouche, Bernardi, Dauge \cite{accdm}), we know that $H$ can be decomposed as $H=\nabla\varphi+\nabla\wedge A$. Notice that $\nabla\wedge(\nabla\wedge A)=\nabla\wedge H$, and ${\rm div}(\nabla\wedge A)=0$, so
$$
\nabla\wedge A\in \mathcal{W}^{1,2}(\Omega)\, ,
$$
provided if $\nabla\wedge H\in \mathcal{L}^2(\Omega)$. More precisely, we have the estimates
\begin{equation}\label{curlA}
||\nabla\wedge A||_{\mathcal{W}^{1,2}(\Omega)}\leq C(||\nabla\wedge H||_{\mathcal{L}^2(\Omega)}+||H||_{\mathcal{L}^2(\Omega)})\, .
\end{equation}

Next from the homogeneous case of the equation \eqref{eq0},  we get that
$$
i \omega\mu_0{\rm div}(\mu\nabla\varphi+\mu\nabla\wedge A)=-{\rm div}(\nabla\wedge E)=0 \,,
$$
hence $\varphi$ satisfies the following elliptic equation
\begin{equation}\label{equ:varphi3.1}
{\rm div}(\mu\nabla\varphi)=-{\rm div}(\mu\nabla\wedge A)\qquad\mbox{in }\Omega \, .
\end{equation}
The boundary condition \eqref{equ:2.4x} can be deduced as the following,
$$
0=(\mu H)\cdot\boldsymbol{\nu}=(\mu\nabla\varphi+\mu\nabla\wedge A)\cdot\boldsymbol{\nu} \,.
$$
Then $\varphi$ satisfies
\begin{equation}\label{bou:varphi3.2}
(\mu\nabla\varphi)\cdot\boldsymbol{\nu}=-(\mu\nabla\wedge A)\cdot\boldsymbol{\nu}\qquad\mbox{on }\partial\Omega \,.
\end{equation}
Multiply $\overline\varphi$ on both sides of the equation \eqref{equ:varphi3.1} and integrate it by part,
$$
\int_{\partial\Omega}\overline{\varphi}(\mu\nabla\varphi)\cdot\boldsymbol{\nu}{\rm d}\sigma-\int_{\Omega}\mu\nabla\varphi\cdot\overline{\nabla\varphi}{\rm d}x=-\int_{\partial\Omega}\overline{\varphi}(\mu\nabla\wedge A)\cdot\boldsymbol{\nu}{\rm d}\sigma+\int_{\Omega}\mu\nabla\wedge A\cdot\overline{\nabla\varphi}{\rm d}x \,.
$$
By employing the boundary condition \eqref{bou:varphi3.2}, and the H\"{o}lder inequality, we have that
\begin{equation}\label{gradphi}
||\nabla\varphi||_{\mathcal{L}^2(\Omega)}\leq C||\mu\nabla\wedge A||_{\mathcal{L}^2(\Omega)}\leq C||\nabla\wedge H||_{\mathcal{L}^2(\Omega)}\,.
\end{equation}

\medskip

{\bf Step 3}. \emph{Interior estimates}.
Let $\eta$ be a smooth positive cut-off function such that $\eta\equiv1$ in $B_r$ and $\eta=0$ on $\partial B_R$, where $B_r=B_r(x_0)$ and $B_R=B_R(x_0)$ with $r<R$  and $B_R(x_0)\subset\Omega$. Then take derivatives $\partial_{k}$ on both sides of the equation \eqref{equ:varphi3.1} and multiply $\eta^2\partial_k\overline{\varphi}$, where $k=1,2,3$. After an integration by part, it gives that
\begin{align*}
\lambda_1\int_{B_R}& \eta^2|\partial_k\nabla\varphi|^2{\rm d}x\leq \int_{B_R}\eta^2\mu\partial_k\nabla\varphi\cdot\overline{\partial_k\nabla\varphi}{\rm d}x\\
=&-\int_{B_R}2\eta\overline{\partial_k\varphi}\mu\nabla\eta\cdot\partial_k\nabla\varphi{\rm d}x-\int_{B_R}\eta^2\partial_k\mu\nabla\varphi\cdot\overline{\partial_k\nabla\varphi}{\rm d}x-2\int_{B_R}\eta\overline{\partial_k\varphi}\partial_k\mu\nabla\eta\cdot\nabla\varphi{\rm d}x\\
&+2\int_{B_R}\eta\overline{\partial_k\varphi}\partial_k(\mu\nabla\wedge A)\cdot\nabla\eta{\rm d}x+\int_{B_R}\eta^2\partial_k(\mu\nabla\wedge A)\cdot\overline{\partial_k\nabla\varphi}{\rm d}x.
\end{align*}
Let us set that
\begin{align*}
I_1 = &-\int_{B_R}2\eta\overline{\partial_k\varphi}\mu\nabla\eta\cdot\partial_k\nabla\varphi{\rm d}x,\quad I_2=-\int_{B_R}\eta^2\partial_k\mu\nabla\varphi\cdot\overline{\partial_k\nabla\varphi}{\rm d}x, \\
I_3= & -2\int_{B_R}\eta\overline{\partial_k\varphi}\partial_k\mu\nabla\eta\cdot\nabla\varphi{\rm d}x, \quad I_4=2\int_{B_R}\eta\overline{\partial_k\varphi}\partial_k(\mu\nabla\wedge A)\cdot\nabla\eta{\rm d}x,\\
I_5= & \int_{B_R}\eta^2\partial_k(\mu\nabla\wedge A)\cdot\overline{\partial_k\nabla\varphi}{\rm d}x,
\end{align*}
then we can get that
\begin{align*}
I_1\leq&\epsilon\int_{B_R}\eta^2|\partial_k\nabla\varphi|^2{\rm d}x+
C_{\epsilon} ||\mu||_{\mathcal{L}^{\infty}(B_R)}^2\int_{\Omega}|\nabla\eta|^2|\partial_k\varphi|^2{\rm d}x,\\
I_2 \leq & C(\int_{B_R}\eta^2|\partial_k\nabla\varphi|^2{\rm d}x)^{\frac{1}{2}}(\int_{B_R}|\nabla\varphi|^6{\rm d}x)^{\frac{1}{6}}(\int_{B_R}\eta^3|\partial_k\mu|^3{\rm d}x)^{\frac{1}{3}},\\
I_3 \leq & C(\int_{B_R}\eta^3|\nabla\eta|^3|\partial_k\mu|^3{\rm d}x)^{\frac{1}{3}}(\int_{B_R}|\nabla\varphi|^3{\rm d}x)^{\frac{2}{3}},\\
I_4 \leq & \epsilon\int_{B_R}\eta^2|\partial_k(\mu\nabla\wedge A)|^2{\rm d}x+C_{\epsilon} \int_{B_R}|\nabla\eta|^2|\partial_k\varphi|^2{\rm d}x,\\
I_5 \leq &\epsilon\int_{B_R}\eta^2|\partial_k\nabla\varphi|^2{\rm d}x+C_{\epsilon} \int_{B_R}\eta^2|\partial_k(\mu\nabla\wedge A)|^2{\rm d}x,
\end{align*}
where $\epsilon$ is a small constant. Notice that
$$
\begin{aligned}
||\partial_k(\mu\nabla\wedge A)||_{\mathcal{L}^2(B_R)}\leq & ||\mu||_{\mathcal{L}^{\infty}(B_R)}||\partial_k(\nabla\wedge A)||_{\mathcal{L}^2(B_R)}+||\mu||_{\mathcal{W}^{1,3}(B_R)}||\nabla\wedge A||_{\mathcal{L}^6(B_R)}\\
\leq & C (||\mu||_{\mathcal{W}^{1,3}(B_R)}+||\mu||_{\mathcal{L}^{\infty}(B_R)})||\nabla\wedge A||_{\mathcal{W}^{1,2}(B_R)}.
\end{aligned}
$$
Then when $\epsilon$ is small enough, we have $I_1 \leq C||\nabla\varphi||^2_{\mathcal{L}^2(\Omega)}$ and then by applying the embedding theorem we can get that
\begin{align*}
I_2 & \leq C\sum_{k=1}(\int_{B_R}\eta^2|\partial_k\nabla\varphi|^2{\rm d}x)^{\frac{1}{2}}(\int_{B_R}|\nabla^2\varphi|^2{\rm d}x)^{\frac{1}{2}}(\int_{B_R}|\partial_k\mu|^{3+\delta}{\rm d}x)^{\frac{1}{3+\delta}}R^{\frac{\delta}{3+\delta}},\\
I_3 & \leq C\sum_{k}(\int_{B_R}|\partial_k\mu|^3{\rm d}x)^{\frac{1}{3}}(\epsilon(\int_{B_R}|\nabla^2\varphi|^2{\rm d}x)^{\frac{1}{2}}+C(\int_{B_R}|\nabla\varphi|^2{\rm d}x)^{\frac{1}{2}}),\\
I_4+I_5 & \leq C(||\mu||^2_{\mathcal{W}^{1,3}(B_R)}+||\mu||^2_{\mathcal{L}^{\infty}(B_R)})||\nabla\wedge A||^2_{\mathcal{W}^{1,2}(B_R)}+C||\nabla\varphi||^2_{\mathcal{L}^2(B_R)}.
\end{align*}
Since $\sum_{k}\int_{B_R}\eta^2|\partial_k\nabla\varphi|^2{\rm d}x\leq \sum_{i=1}^5 I_i$, and it leads to
\begin{align*}
 \sum_{k}\int_{B_R} & \eta^2|\partial_k\nabla\varphi|^2{\rm d}x\\
 & \leq C\sum_{k=1}(\int_{B_R}\eta^2|\partial_k\nabla\varphi|^2{\rm d}x)^{\frac{1}{2}}(\int_{B_R}|\nabla^2\varphi|^2{\rm d}x)^{\frac{1}{2}}(\int_{B_R}|\partial_k\mu|^{3+\delta}{\rm d}x)^{\frac{1}{3+\delta}}R^{\frac{\delta}{3+\delta}}\\
 + & C\sum_{k}(\int_{B_R}|\partial_k\mu|^3{\rm d}x)^{\frac{1}{3}}(\epsilon(\int_{B_R}|\nabla^2\varphi|^2{\rm d}x)^{\frac{1}{2}}+C(\int_{B_R}|\nabla\varphi|^2{\rm d}x)^{\frac{1}{2}})\\
 +&  C(||\mu||^2_{\mathcal{W}^{1,3}(B_R)}+||\mu||^2_{\mathcal{L}^{\infty}(B_R)})||\nabla\wedge A||^2_{\mathcal{W}^{1,2}(B_R)}+C||\nabla\varphi||^2_{\mathcal{L}^2(B_R)}.
\end{align*}
Again let $\epsilon$ and $R$ be small enough, by using the embedding theorem $||\mu||_{\mathcal{W}^{1,3}(B_R)}+||\mu||_{\mathcal{L}^{\infty}(B_R)}\leq C||\mu||_{\mathcal{W}^{1,3+\delta}(B_R)}$ for any $\delta>0$,
\begin{equation}\label{gradphiw1}
(\int_{B_R}|\nabla^2\varphi|^2{\rm d}x)^{\frac{1}{2}}\leq C(||\nabla\wedge A||_{\mathcal{W}^{1,2}(B_R)}+||\nabla\varphi||_{\mathcal{L}^2(B_R)})\leq C(||\nabla\wedge H||_{\mathcal{L}^2(B_R)}+||H||_{\mathcal{L}^2(B_R)}),
\end{equation}
where the constant $C$ depends on the norm $||\mu||_{\mathcal{W}^{1,3+\delta}}$.
Combining the equations (\ref{gradphiw1}) with (\ref{curlH}),(\ref{curlA})and (\ref{gradphi}), then we obtain the $\mathcal{H}^1$ interior estimate of $H$ as claimed in the proposition.

\medskip

{\bf Step 4}. \emph{Boundary estimates}.
Notice that $\varphi$ is a solution of the conormal derivatives problem of a scalar elliptic equation of second order, so one can use the standard argument to derive the boundary estimates of the solutions. More precisely, for any point $x_0\in\partial\Omega$, we can introduce an orthogonal transformation of the coordinates,
$$
y=\Phi(x),\qquad x=\Psi(y).
$$
In the new coordinates $y=(y_1,y_2,y_3)$, and we define $B_{R+}(y_0):=B_R(y_0)\cap\{y_3>0\}$, where $R>0$
and $y_0=\Phi(x_0)$. Let $\tilde{\varphi}(y)=\varphi(\Psi(y))$, and for the simplicity, we write $\varphi$ instead of $\tilde{\varphi}$. Then  $\varphi$ satisfies the equation
\begin{equation}\label{eqcono}
{\rm div}(\tilde{\mu}\nabla\varphi)-\tilde{b}\cdot\nabla\varphi=-{\rm div}(\bar{\mu}\nabla_{x}\wedge A)+\bar{b}\cdot\nabla_x\wedge A,
\end{equation}
where $\nabla_x=(\partial_{x_1},\partial_{x_2},\partial_{x_3})$, $\tilde{\mu}_{kl}=\sum_{i,j=1}^3\frac{\partial y_k}{\partial x_i}\frac{\partial y_l}{\partial x_j}\mu_{ij}$, $\bar{\mu}_{kj}=\sum_{i=1}^3\frac{\partial y_k}{\partial x_i}\mu_{ij}$, $\tilde{b}_m=\sum_{i,j,k,l=1}^3\frac{\partial^2y_k}{\partial x_i\partial x_l}\frac{\partial y_m}{\partial x_j}\frac{\partial y_k}{\partial x_l}\mu_{ij}$, and $\bar{b}_j=\sum_{i,k,l=1}^3\frac{\partial^2y_k}{\partial x_i\partial x_l}\frac{\partial y_k}{\partial x_l}\mu_{ij}$.
Moreover, $\varphi$ satisfies the following boundary condition on $\{y_3=0\}$
\begin{equation}\label{bdycond}
(\tilde{\mu}\nabla\varphi)\cdot e_3=-(\bar{\mu}\nabla_x\wedge A)\cdot e_3,
\end{equation}
where $e_3$ is the unit direction of $y_3$-axis.
Since the boundary $\partial\Omega$ is of $ C^{1,1}$,  the coefficient $\tilde{\mu}$ and $\bar{\mu}$ have the same regularity as $\mu$, while $\tilde{b}$ and $\bar{b}$ are $\mathcal{L}^{\infty}$ functions.

Again let $\eta$ be a smooth positive cut-off function such that $\eta\equiv1$ in $B_r(y_0)$ and $\eta=0$ on $\partial B_R(y_0)$. Then let us take the tangential derivatives $\partial_{k}$, where $\partial_k=\partial_{y_k}$ with $k=1$ or $2$ on both sides of the equation \eqref{eqcono} and multiply $\eta^2\partial_k\overline{\varphi}$. After an integration by part, it gives that
\begin{align*}
&-\int_{B_{R+}(y_0)}\partial_k(\tilde{\mu}\nabla\varphi)\cdot \overline{\nabla(\eta^2\partial_k\varphi)}{\rm d}y+\int_{B_{R+}(y_0)}\tilde{b}\cdot\nabla\varphi\overline{\partial_k(\eta^2\partial_k\varphi)}{\rm d}y\\
&+\int_{(y_3=0)\cap B_{R+}(y_0)}\eta^2\overline{\partial_k\varphi}\partial_k(\tilde{\mu}\nabla\varphi)\cdot e_3{\rm d}\sigma\\
        =&\int_{B_{R+}(y_0)}\partial_k(\bar{\mu}\nabla\wedge A)\cdot \overline{\nabla(\eta^2\partial_k\varphi)}{\rm d}y-\int_{B_{R+}(y_0)}\bar{b}\cdot\nabla_x\wedge A\overline{\partial_k(\eta^2\partial_k\varphi)}{\rm d}y\\
&-\int_{(y_3=0)\cap B_{R+}(y_0)}\eta^2\overline{\partial_k\varphi}\partial_k(\bar{\mu}\nabla\wedge A)\cdot e_3{\rm d}\sigma .
\end{align*}
Applying the boundary condition \eqref{bdycond}, we have that
$$
\partial_k(\tilde{\mu}\nabla\varphi)\cdot e_3=-\partial_k(\bar{\mu}\nabla\wedge A)\cdot e_3,\qquad\mbox{on } y_3=0,
$$
for $k=1$ or $2$.
Therefore, the boundary terms vanish and we have
\begin{align*}
&-\int_{B_{R+}(y_0)}\partial_k(\tilde{\mu}\nabla\varphi)\cdot \overline{\nabla(\eta^2\partial_k\varphi)}{\rm d}y+\int_{B_{R+}(y_0)}\tilde{b}\cdot\nabla\varphi\overline{\partial_k(\eta^2\partial_k\varphi)}{\rm d}y\\
        =&\int_{B_{R+}(y_0)}\partial_k(\bar{\mu}\nabla\wedge A)\cdot \overline{\nabla(\eta^2\partial_k\varphi)}{\rm d}y-\int_{B_{R+}(y_0)}\bar{b}\cdot\nabla_x\wedge A\overline{\partial_k(\eta^2\partial_k\varphi)}{\rm d}y.
\end{align*}
Then following {\bf Step 3}, we can derive the estimates of $\partial_k\nabla\varphi$ for $k=1$ or $2$.

By applying the equation \eqref{equ:varphi3.1}, it is easy to see that $\partial_{y_3}^2\varphi$ is bounded by $\partial_{k}\nabla\varphi$ for $k=1$ or $2$. Then  we have the boundary estimates in the $y$-coordinates which satisfies that

\begin{align*}
\int_{B_{r+}(y_0)}|\nabla^2\varphi|^2{\rm d}x & \leq C(||\nabla\wedge A||_{\mathcal{W}^{1,2}({ B_{R+}(y_0)})}+||\nabla\varphi||_{\mathcal{L}^2({ B_{R+}(y_0)})}) \\ & \leq C(||\nabla\wedge H||_{\mathcal{L}^2(B_{R+}(y_0))}+||H||_{\mathcal{L}^2( B_{R+}(y_0))}).
\end{align*}

\medskip
{\bf Step 5}. \emph{Global estimates}.
Finally, let $\eta_i$ be cut-off functions which satisfies $\sum_i \eta_i\equiv 1$ and such that the set of all the subregions 
$$
\Omega_i:=\{x\in\Omega\,;\,\eta_i(x)>0\}
$$ 
together is a finite cover of $\Omega$ with the property that $\rm{diam}\{\Omega_i\}\leq R$.
Based on the interior and boundary estimate, we can prove the $\mathcal{H}^1$-regularity of $H$ as follows
\begin{align*}
||H||_{\mathcal{H}^{1}(\Omega)}
\leq & ||\nabla(\nabla\varphi)||_{\mathcal{L}^2(\Omega)}+||\nabla(\nabla\wedge A)||_{\mathcal{L}^2(\Omega)}+\|H\|_{\mathcal{L}^2(\Omega)}\\
\leq & C( \sum_{i}||\eta_i\nabla(\nabla\varphi)||_{\mathcal{L}^2(\Omega_i)}+||\nabla\wedge H||_{\mathcal{L}^2(\Omega)}+||H||_{\mathcal{L}^2(\Omega)})\\
\leq &C(||\nabla\wedge H||_{\mathcal{L}^2(\Omega)}+||H||_{\mathcal{L}^2(\Omega)})\\
\leq &C||H||_{\mathcal{L}^2(\Omega)}.
\end{align*}
\end{proof}

Based on the new approach, we are now going to prove theorem \ref{thm:jejm} for the non-homogeneous case.
\begin{proof}[Proof of theorem \ref{thm:jejm}]
Let
\begin{equation}
\tilde{E}=E-G,\qquad\tilde{H}=H.
\end{equation}
Obviously, $\tilde{E}$ and $\tilde{H}$ satisfy the system
\begin{equation}
\begin{array}{cc}
{\begin{array}{c}
{\nabla\wedge{\tilde{E}}  -i\,\omega\mu_{0}\mu(x)\,{\tilde{H}}=\tilde{J}_m}\\
{\nabla\wedge{\tilde{H}}  +i\,\omega\varepsilon_{0}\varepsilon(x)\,{\tilde{E}}=\tilde{J}_e} \end{array} }&{\textrm{ in }\Omega,}\\
{\tilde{E}\wedge\boldsymbol{\nu}  = 0}&{\textrm{ on }\partial\Omega.}
\end{array}
\end{equation}
where
\begin{equation*}
\tilde{J}_m:= J_m-\nabla\wedge G , \quad  \tilde{J}_e:= J_e-i\,\omega\varepsilon_{0}\varepsilon(x)\,{G} \quad \textrm{ in }\Omega.
\end{equation*}
Then $\tilde{H}$ satisfies the equations
\begin{equation}\label{equ:4.3a}
\nabla\wedge(\varepsilon^{-1}(x)\nabla\wedge \tilde{H})=k_1^2\mu(x)\tilde{H}-k_2\tilde{J}_m+\nabla\wedge(\varepsilon^{-1}(x)\tilde{J}_e),
\end{equation}
and
\begin{equation}\label{equ:4.4a}
{\rm div}(\mu(x)\tilde{H})=k_3{\rm div} \tilde{J}_m=k_3{\rm div} J_m.
\end{equation}
On the boundary $\partial\Omega$, $\tilde{H}$ satisfies that
\begin{equation}\label{identity1}
(\varepsilon^{-1}\nabla\wedge\tilde{H})\wedge\boldsymbol{\nu}=-iw\varepsilon_0\tilde{E}\wedge\boldsymbol{\nu}+\varepsilon^{-1}\tilde{J}_e\wedge\boldsymbol{\nu}=\varepsilon^{-1}\tilde{J}_e\wedge\boldsymbol{\nu}.
\end{equation}

{\bf Step 1}. \emph{Interior Estimates}

Multiply $\overline{\tilde{H}}$ on both sides of equation \eqref{equ:4.3a} and integrate it by part, then
\begin{equation}\label{interorHtild}
\int_{\Omega}\varepsilon^{-1}(x)(\nabla\wedge \tilde{H})\cdot\overline{(\nabla\wedge \tilde{H})}dx+\int_{\partial\Omega}((\varepsilon^{-1}\nabla\wedge \tilde{H})\wedge\boldsymbol{\nu})\cdot \overline{\tilde{H}}{\rm d}\sigma\\
\end{equation}
\begin{equation*}
=k_1^2\int_{\Omega}(\mu\tilde{H})\cdot\overline{\tilde{H}}dx-k_2\int_{\Omega}\tilde{J}_m\cdot\overline{\tilde{H}}{\rm d}x+\int_{\Omega}(\varepsilon^{-1}\tilde{J}_e)\cdot\overline{(\nabla\wedge \tilde{H})}{\rm d}x+\int_{\partial\Omega}(\varepsilon^{-1}\tilde{J}_e\wedge\boldsymbol{\nu})\cdot\overline{\tilde{H}}{\rm d}\sigma.
\end{equation*}
By applying the identity \eqref{identity1} to the equation \eqref{interorHtild}, it is easy to see that the following inequality holds,

\begin{equation}\label{3.11xwx}
||\nabla\wedge \tilde{H}||_{\mathcal{L}^2(\Omega)}\le C(||\tilde{H}||_{\mathcal{L}^2(\Omega)}+||\tilde{J}_e||_{\mathcal{L}^2(\Omega)}+||\tilde{J}_m||_{\mathcal{L}^2(\Omega)}).
\end{equation}

Next, let $\tilde{H}=\nabla\varphi+\nabla\wedge A$, and we have that 
$$
||\nabla\wedge A||_{\mathcal{H}^1(\Omega)}\le \|\nabla\wedge\tilde{H}\|_{\mathcal{L}^2(\Omega)}\le C(||\tilde{H}||_{\mathcal{L}^2(\Omega)}+||\tilde{J}_e||_{\mathcal{L}^2(\Omega)}+||\tilde{J}_m||_{\mathcal{L}^2(\Omega)}),
$$
and $\varphi$ satisfies the following scalar elliptic equation of second order
\begin{equation}\label{inhomodiveq1}
{\rm div}(\mu\nabla\varphi)=-{\rm div}(\mu\nabla\wedge A)+k_3{\rm div}J_m.
\end{equation}

Then take derivatives $\partial_{k}$ on both sides of the equation \eqref{inhomodiveq1}and multiply $\eta^2\partial_k\overline{\varphi}$. After an integration by part, it gives that
\begin{align*}
\lambda_1\int_{\Omega}& \eta^2|\partial_k\nabla\varphi|^2{\rm d}x\leq \int_{\Omega}\eta^2\mu\partial_k\nabla\varphi\cdot\overline{\partial_k\nabla\varphi}{\rm d}x\\
=&-\int_{\Omega}2\eta\partial_k\varphi\mu\nabla\eta\cdot\overline{\partial_k\nabla\varphi}{\rm d}x-\int_{\Omega}\eta^2\partial_k\mu\nabla\varphi\cdot\overline{\partial_k\nabla\varphi}{\rm d}x-2\int_{\Omega}\eta\partial_k\varphi\partial_k\mu\nabla\eta\cdot\overline{\nabla\varphi}{\rm d}x\\
&+2\int_{\Omega}\eta\overline{\partial_k\varphi}\partial_k(\mu\nabla\wedge A)\cdot\nabla\eta{\rm d}x+\int_{\Omega}\eta^2\partial_k(\mu\nabla\wedge A)\cdot\overline{\partial_k\nabla\varphi}{\rm d}x \\
&+2\int_{\Omega}\eta\overline{\partial_k\varphi}\partial_k(k_3J_m)\cdot\nabla\eta{\rm d}x+\int_{\Omega}\eta^2\partial_k(k_3J_m)\cdot\overline{\partial_k\nabla\varphi}{\rm d}x.
\end{align*}

Similar to the approach that we used in the argument for the homogeneous equations, we use the following notation,
\begin{align*}
\mathscr{I}_1 = &-\int_{B_R}2\eta\overline{\partial_k\varphi}\mu\nabla\eta\cdot\partial_k\nabla\varphi{\rm d}x,\quad \mathscr{I}_2=-\int_{B_R}\eta^2\partial_k\mu\nabla\varphi\cdot\overline{\partial_k\nabla\varphi}{\rm d}x, \\
\mathscr{I}_3= & -2\int_{B_R}\eta\overline{\partial_k\varphi}\partial_k\mu\nabla\eta\cdot\nabla\varphi{\rm d}x, \quad \mathscr{I}_4=2\int_{B_R}\eta\overline{\partial_k\varphi}\partial_k(\mu\nabla\wedge A)\cdot\nabla\eta{\rm d}x,\\
\mathscr{I}_5= & \int_{B_R}\eta^2\partial_k(\mu\nabla\wedge A)\cdot\overline{\partial_k\nabla\varphi}{\rm d}x,\quad \mathscr{I}_6=  2\int_{\Omega}\eta\overline{\partial_k\varphi}\partial_k(k_3J_m)\cdot\nabla\eta{\rm d}x, \\
\mathscr{I}_7= & \int_{\Omega}\eta^2\partial_k(k_3J_m)\cdot\overline{\partial_k\nabla\varphi}{\rm d}x,
\end{align*}
then we can get that
\begin{align*}
\mathscr{I}_1\leq&\epsilon\int_{B_R}\eta^2|\partial_k\nabla\varphi|^2{\rm d}x+
C_{\epsilon} ||\mu||_{\mathcal{L}^{\infty}(B_R)}^2\int_{\Omega}|\nabla\eta|^2|\partial_k\varphi|^2{\rm d}x,\\
\mathscr{I}_2 \leq & C(\int_{B_R}\eta^2|\partial_k\nabla\varphi|^2{\rm d}x)^{\frac{1}{2}}(\int_{B_R}|\nabla\varphi|^6{\rm d}x)^{\frac{1}{6}}(\int_{B_R}\eta^3|\partial_k\mu|^3{\rm d}x)^{\frac{1}{3}},\\
\mathscr{I}_3 \leq & C(\int_{B_R}\eta^3|\nabla\eta|^3|\partial_k\mu|^3{\rm d}x)^{\frac{1}{3}}(\int_{B_R}|\nabla\varphi|^3{\rm d}x)^{\frac{2}{3}},\\
\mathscr{I}_4 \leq & \epsilon\int_{B_R}\eta^2|\partial_k(\mu\nabla\wedge A)|^2{\rm d}x+C_{\epsilon} \int_{B_R}|\nabla\eta|^2|\partial_k\varphi|^2{\rm d}x,\\
\mathscr{I}_5 \leq &\epsilon\int_{B_R}\eta^2|\partial_k\nabla\varphi|^2{\rm d}x+C_{\epsilon} \int_{B_R}\eta^2|\partial_k(\mu\nabla\wedge A)|^2{\rm d}x,\\
\mathscr{I}_6 \leq & \epsilon \int_{B_R}\eta^2|\partial_k(J_m)|^2{\rm d}x+C_{\epsilon} \int_{B_R}|\nabla\eta|^2|\partial_k\varphi|^2{\rm d}x, \\
\mathscr{I}_7 \leq &  \epsilon \int_{B_R}\eta^2|\partial_k\nabla \varphi|^2{\rm d}x+C_{\epsilon} \int_{B_R}\eta^2|\partial_k(J_m)|^2{\rm d}x.
\end{align*}
Therefore, when $\epsilon$ in above inequalities is small enough, we have the interior estimates for any $x_0\in\Omega$,
$$
\begin{aligned}
&\big(\int_{B_{r}(x_0)}|\nabla^2\varphi|^2{\rm d}x\big)^{\frac{1}{2}}\\
\leq & C(||\nabla\wedge A||_{\mathcal{W}^{1,2}(B_R)}+||J_m||_{\mathcal{W}^{1,2}(B_R)}+||\nabla\varphi||_{\mathcal{L}^2(B_R)})\\
\leq & C(||\nabla\wedge \tilde{H}||_{\mathcal{L}^2(B_R)}+||J_m||_{\mathcal{W}^{1,2}(B_R)}+||\tilde{J}_m||_{\mathcal{L}^{2}(B_R)}+||\tilde{J}_e||_{\mathcal{L}^{2}(B_R)}+||\tilde{H}||_{\mathcal{L}^2(B_R)}).
\end{aligned}
$$
{\bf Step 2}. \emph{Boundary Estimates.}
For the boundary estimates, by \eqref{3.11xwx}, it is easy to see that for any $x_0\in\partial\Omega$,

\begin{equation}
\int_{B_{r+}\cap\Omega}|\nabla\wedge \tilde{H}|^2dx \le C(R)\int_{\Omega}(|\tilde{H}|^2+|\tilde{J}_e|^2+|\tilde{J}_m|^2)dx.
\end{equation}

Moreover, in the new $y$-coordinates as introduced in {\bf Step 4} of the proof of Proposition \ref{thm:1}, $\varphi := \tilde{\varphi}(y)$  satisfies the equation
\begin{equation}\label{diveqbound1}
{\rm div}(\tilde{\mu}\nabla\varphi)-\tilde{b}\cdot\nabla\varphi=-{\rm div}(\bar{\mu}\nabla_{x}\wedge A)+\bar{b}\cdot\nabla_x\wedge A+k_3{\rm div}_xJ_m,
\end{equation}
with the boundary condition on $\{y_3=0\}$
\begin{equation}\label{3.19a}
(\tilde{\mu}\nabla\varphi)\cdot e_3=-(\bar{\mu}\nabla_x\wedge A)\cdot e_3+k_3\tilde{J}_m\cdot e_3.
\end{equation}
Let $\eta$ be a smooth positive cut-off function such that $\eta\equiv1$ in $B_r(y_0)$ and $\eta=0$ on $\partial B_R(y_0)$. Then take the tangential derivatives $\partial_{k}$ on equation (\ref{diveqbound1}), where $\partial_{k}=\partial_{y_k}$ ($k=1$ or $2$) and multiply $\eta^2\partial_k\overline{\varphi}$. After an integration by part, it gives that
\begin{align*}
&-\int_{B_{R+}(y_0)}\partial_k(\tilde{\mu}\nabla\varphi)\cdot \overline{\nabla(\eta^2\partial_k\varphi)}{\rm d}y+\int_{B_{R+}(y_0)}\tilde{b}\cdot\nabla\varphi\overline{\partial_k(\eta^2\partial_k\varphi)}{\rm d}y\\
&+\int_{\{y_3=0\}\cap B_{R+}(y_0)}\eta^2\overline{\partial_k\varphi}\partial_k(\tilde{\mu}\nabla\varphi)\cdot e_3{\rm d}\sigma\\
=&\int_{B_{R+}(y_0)}\partial_k(\tilde{\mu}\nabla_x\wedge A)\cdot \overline{\nabla(\eta^2\partial_k\varphi)}{\rm d}y-\int_{B_{R+}(y_0)}\bar{b}\cdot\nabla_x\wedge A\overline{\partial_k(\eta^2\partial_k\varphi)}{\rm d}y\\
&-\int_{B_{R+}(y_0)}k_3{\rm div}_{x}J_m\overline{\partial_k(\eta^2\partial_k\varphi)}{\rm d}y-\int_{\{y_3=0\}\cap B_{R+}(y_0)}\eta^2\overline{\partial_k\varphi}\partial_k(\tilde{\mu}\nabla_x\wedge A)\cdot e_3{\rm d}\sigma .
\end{align*}
Here we use the fact that all the integrals on the boundary $\partial B_{R+}(y_0)\cap\{y_0>0\}$ is $0$, due to the cut-off function $\eta$.

Then by the boundary condition \eqref{3.19a} and the trace theorem,
\begin{align*}
&\int_{\{y_3=0\}\cap B_{R+}(y_0)}\eta^2\overline{\partial_k\varphi}\partial_k(\tilde{\mu}\nabla\varphi)\cdot e_3{\rm d}\sigma+\int_{\{y_3=0\}\cap B_{R+}(y_0)}\eta^2\overline{\partial_k\varphi}\partial_k(\tilde{\mu}\nabla_x\wedge A)\cdot e_3{\rm d}\sigma\\
= &k_3\int_{\{y_3=0\}\cap B_{R+}(y_0)}\eta^2\overline{\partial_k\varphi}\partial_k\tilde{J}_m\cdot e_3{\rm d}\sigma\\
\leq& C||\eta^2\partial_k\varphi||_{\mathcal{H}^{\frac{1}{2}}(\{y_3=0\}\cap B_{R+}(y_0))}||\partial_k(\tilde{J}_m\cdot\boldsymbol{\nu})||_{\mathcal{H}^{-\frac{1}{2}}(\{y_3=0\}\cap B_{R+}(y_0))}\\
\leq& C||\varphi||_{\mathcal{H}^{2}(B_{R+}(y_0))}||\tilde{J}_m\cdot\boldsymbol{\nu}||_{\mathcal{H}^{\frac{1}{2}}(\{y_3=0\}\cap B_{R+}(y_0))}.
\end{align*}
Therefore, by the H\"{o}lder inequality, if the radius $R$ is small enough, then
\begin{align*}
( &\int_{B_{r+}(y_0)}|\nabla^2\varphi|^2{\rm d}x)^{\frac{1}{2}}\\
  \leq& C(||\nabla\wedge A||_{\mathcal{W}^{1,2}({B_{R+}(y_0)})}+||\nabla\varphi||_{\mathcal{L}^2({ B_{R+}(y_0)})}+||J_m||_{\mathcal{W}^{1,2}(B_{R+})}+||\tilde{J}_m\cdot\boldsymbol{\nu}||_{\mathcal{H}^{\frac{1}{2}}(\{y_3=0\}\cap B_{R+}(y_0))}) \\
\leq &C(||\tilde{H}||_{\mathcal{L}^2(B_{R+})}+||\tilde{J}_e||_{\mathcal{L}^2(B_{R+})}+||\tilde{J}_m||_{\mathcal{L}^2(B_{R+})}+||J_m||_{\mathcal{W}^{1,2}(B_{R+})}+||\tilde{J}_m\cdot\boldsymbol{\nu}||_{\mathcal{H}^{\frac{1}{2}}(\{y_3=0\}\cap B_{R+}(y_0))})\\
\leq& C(||\nabla\wedge G||_{\mathcal{L}^2(B_{R+})}+||J_m||_{\mathcal{W}^{1,2}(B_{R+})}\\
 &~~~+||H||_{\mathcal{L}^2(B_{R+})}+||J_e||_{\mathcal{L}^2(B_{R+})}+||(J_m-\nabla\wedge G)\cdot\boldsymbol{\nu}||_{\mathcal{H}^{\frac{1}{2}}(\{y_3=0\}\cap B_{R+}(y_0))}).
 \end{align*}

Hence by summarizing all the interior and boundary estimates together, we have that
\begin{align*}
||H||_{\mathcal{H}^{1}(\Omega)}=&||\tilde{H}||_{\mathcal{H}^{1}(\Omega)}\\
\leq& ||\nabla\wedge A||_{\mathcal{L}^2(\Omega)}+||\varphi||_{\mathcal{H}^2(\Omega)}\\
\leq &C(||\tilde{H}||_{\mathcal{L}^2(\Omega)}+||\tilde{J}_e||_{\mathcal{L}^2(\Omega)}+||\tilde{J}_m||_{\mathcal{L}^2(\Omega)}+||J_m||_{\mathcal{W}^{1,2}(\Omega)}+||\tilde{J}_m\cdot\boldsymbol{\nu}||_{\mathcal{H}^{\frac{1}{2}}(\partial\Omega)})\\
\leq& C(||\nabla\wedge G||_{\mathcal{L}^2(\Omega)}+||J_m||_{\mathcal{W}^{1,2}(\Omega)}\\
 &~~~+||H||_{\mathcal{L}^2(\Omega)}+||J_e||_{\mathcal{L}^2(\Omega)}+||(J_m-\nabla\wedge G)\cdot\boldsymbol{\nu}||_{\mathcal{H}^{\frac{1}{2}}(\partial\Omega)}).
\end{align*}

It completes the proof of this theorem.
\end{proof}



\subsection{Proof of Theorem \ref{thm:inw1p}}
As before, we develop the exact regularity results for the homogeneous equation before giving the proof of the theorem (\ref{thm:inw1p}).
Notice that in the bounded domain $\Omega$, the Rellich-Kondrachov Theorem provides that , $||H||_{\mathcal{L}^{p}(\Omega)}\leq\epsilon||\nabla H||_{\mathcal{L}^p(\Omega)}+C||H||_{\mathcal{L}^2(\Omega)}$ for $p \geq 2$, so we only need to estimate $||\nabla H||_{\mathcal{L}^p(\Omega)}$. We first give the following Lemma to estimate the solutions of elliptic equations with constant coefficients. For any $x_0\in\Omega$, let $0<R\leq\rm{dist}(x_0,\,\partial\Omega)$, and let $B_R=B_R(x_0):=\{x\,;\,|x-x_0|< R\}$.

\begin{lem}\label{lem:3}
If $f^{m}_{i} \in \mathcal{L}^{p}(B_R)$, suppose $u \in \mathcal{H}_0^1(B_R)$ satisfies the following equation
$$
\sum_{m,\beta,i,j=1}^3\int_{B_R}A_{ij}^{m\beta} \nabla_{m}u^i \nabla_{\beta}\varphi^jdx=\sum_{i,m=1}^3\int_{B_R}f^{m}_i\nabla_{m}\varphi^i dx, \quad \forall \varphi\in H_0^1(B_R),$$
where $\nabla=(\nabla_1,\nabla_2,\nabla_3)$, $A_{ij}^{m\beta}$ are constants and satisfy that $|A_{ij}^{m\beta}| \le M $.Further there is a positive constant $\lambda>0$ such that $\sum_{i,j,m,\beta=1}^3A_{ij}^{m\beta}\xi^i_{m}\xi^j_{\beta}\ge \lambda |\xi|^2$ for any $\xi_{m}^i,\xi_{\beta}^j\in\mathbb{R}$, then for any $2\leq p<\infty$,  $$ ||\nabla u||_{\mathcal{L}^p(B_R)}\le C||f||_{\mathcal{L}^p(B_R)}.$$
where $C$ depends on $p$.\end{lem}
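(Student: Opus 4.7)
The plan is to combine the standard $\mathcal{L}^2$ energy estimate with Calder\'on--Zygmund singular integral theory for the constant-coefficient elliptic system $Lu := -\nabla_\beta(A^{m\beta}_{ij}\nabla_m u^i)$. The case $p=2$ is dispatched directly: setting $\varphi = u$ in the weak formulation and invoking the ellipticity hypothesis $\sum A^{m\beta}_{ij}\xi^i_m\xi^j_\beta \ge \lambda|\xi|^2$ yields $\lambda\|\nabla u\|_{\mathcal{L}^2(B_R)}^2 \le \|f\|_{\mathcal{L}^2(B_R)}\|\nabla u\|_{\mathcal{L}^2(B_R)}$, giving the conclusion with $C = 1/\lambda$.

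For $p > 2$, the idea is to reduce to $R = 1$ by the scaling $x\mapsto Rx$, which leaves the desired estimate invariant since each side has one derivative of $u$ paired with an $\mathcal{L}^p$-norm of $f$, and then to exploit the Dirichlet Green's matrix $G(x,y)$ of $L$ on the smooth domain $B_1$. Writing $G(x,y) = \Gamma(x-y) + h(x,y)$, where $\Gamma$ is the whole-space fundamental matrix of $L$ (homogeneous of degree $-1$ in three dimensions and smooth away from the origin) and $h$ is the regular part enforcing the vanishing boundary condition, an integration by parts gives $u(x) = -\int_{B_1}\nabla_y G(x,y)\cdot f(y)\,dy$. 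Differentiating once more in $x$, the gradient $\nabla u$ decomposes as the sum of a convolution-type singular integral with kernel $\nabla_x\nabla_y\Gamma(x-y)$---homogeneous of degree $-3$, smooth and mean-zero on the unit sphere, hence a bona fide Calder\'on--Zygmund kernel---plus an operator with the smoother kernel $\nabla_x\nabla_y h(x,y)$. The Calder\'on--Zygmund theorem then supplies $\mathcal{L}^p$ continuity of the singular part for every $1<p<\infty$, while the regular part is estimated by Schur's test.

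The main obstacle is verifying that the regular part $h$ inherits enough smoothness for the above to go through and that the resulting constant depends only on $p$, $\lambda$ and $M$. Since for each fixed $y$ the function $h(\cdot,y)$ solves a homogeneous constant-coefficient system on $B_1$ with boundary data $-\Gamma(\cdot - y)$, classical elliptic regularity yields uniform smoothness of $h$ up to $\partial B_1$ with bounds depending only on $\lambda$ and $M$; this is the one place where the smoothness of the boundary $\partial B_R$ is essential. A technically lighter alternative is to simply invoke the Agmon--Douglis--Nirenberg $\mathcal{L}^p$ theory for constant-coefficient linear elliptic systems on smooth bounded domains, which directly gives $\|u\|_{\mathcal{W}^{1,p}(B_1)} \le C\|f\|_{\mathcal{L}^p(B_1)}$ with $C = C(p,\lambda,M)$; combined with the Poincar\'e inequality (since $u\in\mathcal{H}_0^1(B_1)$) and undoing the rescaling, this produces the stated bound.
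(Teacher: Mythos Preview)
Your proof is correct---particularly the Agmon--Douglis--Nirenberg fallback, which is the cleanest way to close the argument---but it proceeds by a genuinely different route from the paper. The paper handles $p=2$ exactly as you do, but for $p>2$ it does \emph{not} invoke Calder\'on--Zygmund or ADN theory. Instead it establishes the endpoint mapping $f\in\mathcal{L}^\infty \mapsto \nabla u\in\operatorname{BMO}$ via the Campanato-space chain $\|\nabla u\|_{\operatorname{BMO}(B_R)}\le \|\nabla u\|_{\mathfrak{L}^{2,n}(B_R)}\le C\|f\|_{\mathfrak{L}^{2,n}(B_R)}\le C\|f\|_{\mathcal{L}^\infty(B_R)}$ (the middle inequality being a standard local energy/comparison estimate for constant-coefficient systems), and then applies the Stampacchia interpolation theorem between the $\mathcal{L}^2\to\mathcal{L}^2$ and $\mathcal{L}^\infty\to\operatorname{BMO}$ bounds to obtain $\mathcal{L}^p\to\mathcal{L}^p$ for all $2\le p<\infty$. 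Your approach has the advantage of yielding the full range $1<p<\infty$ and being entirely standard once one accepts the black box; the paper's approach is more self-contained and, importantly, is consistent with the Campanato-space machinery (Definitions and Lemmas in the Appendix) that the paper relies on again for the $\mathcal{C}^{1,\alpha}$ estimates later. One small caution on your Green's-function variant: the claim that the regular part $h(\cdot,y)$ is \emph{uniformly} smooth up to $\partial B_1$ is not quite right as $y\to\partial B_1$, so the Schur-test step would need more care; your ADN alternative avoids this entirely.
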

\begin{proof}
By choosing $\varphi=\overline{u}$, it is easy to see that $||\nabla u||_{\mathcal{L}^2(B_R)}\le C||f||_{\mathcal{L}^2(B_R)}$. Again it is not hard to obtain the estimate $||\nabla u||_{\operatorname{BMO}(B_R)}\le||\nabla u||_{\mathfrak{L}^{2,n}(B_R)}\le C||f||_{\mathfrak{L}^{2,n}(B_R)}\leq C||f||_{\mathcal{L}^{\infty}(B_R)}$ for the elliptic equations with constant coefficients where the $\operatorname{BMO}$ space and $\mathfrak{L}^{2,n}$ space are introduced in the Appendix. Now the Lemma is proved by the Stampacchia interpolation theorem. See the details of the Stampacchia interpolation theorem in the Appendix.
\end{proof}

\begin{prop}\label{thm:2}Let $\mu(x)\in \mathcal{W}^{1,3+\delta}(\Omega)^{3 \times 3}$, where $\delta>0$.
Suppose $H\in \mathcal{H}^1(\Omega)$ is a weak solution of the equations
\eqref{eq1}, with the boundary condition $(\varepsilon^{-1}\nabla\wedge H)\wedge \boldsymbol{\nu}=0$ on $\partial\Omega$,
where $\varepsilon^{-1}(x)\in \mathcal{C}^0(\Omega)^{3\times3}$ and satisfies the condition \eqref{cond1},
then the following inequality holds
\begin{equation}
||H||_{\mathcal{W}^{1,p}(\Omega)}\leq C||H||_{\mathcal{L}^2(\Omega)},\qquad p \geq 2,
\end{equation}
where the constant $C$ does not depend on the solutions $H$.
\end{prop}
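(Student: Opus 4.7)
The plan is to upgrade the $\mathcal{H}^1$ estimate from Proposition \ref{thm:1} to the $\mathcal{W}^{1,p}$ scale by reusing the Helmholtz decomposition of that proof and invoking Lemma \ref{lem:3} inside a freezing/perturbation scheme that accommodates the merely continuous coefficient $\varepsilon^{-1}$. Write $H=\nabla\varphi+\nabla\wedge A$ with ${\rm div}(\nabla\wedge A)=0$. Proposition \ref{thm:1} already gives $\|H\|_{\mathcal{H}^1(\Omega)}\le C\|H\|_{\mathcal{L}^2(\Omega)}$, and since $\nabla\wedge(\nabla\wedge A)=\nabla\wedge H$, the classical div--curl regularity bound $\|\nabla\wedge A\|_{\mathcal{W}^{1,p}}\le C\|\nabla\wedge H\|_{\mathcal{L}^p}$ reduces the proof to two separate $\mathcal{L}^p$-estimates: one for $\nabla\wedge H$ and one for $\nabla^2\varphi$.

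For $\nabla\wedge H$, I would freeze $\varepsilon^{-1}$ and $\mu$ at an arbitrary point $x_0\in\Omega$, add the null term $c\,\nabla({\rm div}\,H)$ --- which vanishes modulo commutator errors produced by ${\rm div}(\mu H)=0$ --- to both sides of \eqref{eq1}, and obtain on a small ball $B_R(x_0)$ a constant-coefficient divergence-form elliptic system for $H$ of the type covered by Lemma \ref{lem:3}, with right-hand side
$$
k^{2}\mu_{0} H + \nabla\wedge\bigl((\varepsilon^{-1}(x_0)-\varepsilon^{-1}(x))\,\nabla\wedge H\bigr) + c\,\nabla\bigl(\mu_{0}^{-1}{\rm div}((\mu-\mu_{0})H)\bigr).
$$
Localising via a cutoff $\eta$ supported in $B_R(x_0)$ and applying Lemma \ref{lem:3} to $\eta H$ then produces
$$
\|\eta\,\nabla H\|_{\mathcal{L}^p(B_R)}\le C(p)\Bigl(\|\eta\mu_{0} H\|_{\mathcal{L}^p}+\|\varepsilon^{-1}(x_0)-\varepsilon^{-1}\|_{\mathcal{L}^\infty(B_R)}\,\|\eta\,\nabla\wedge H\|_{\mathcal{L}^p}+(\text{lower order})\Bigr).
$$
By uniform continuity of $\varepsilon^{-1}$ on $\overline{\Omega}$ the radius $R$ can be chosen small enough, depending only on $p$ and the modulus of continuity, that the perturbation term is absorbed on the left-hand side; a finite covering of $\Omega$ then yields $\|\nabla\wedge H\|_{\mathcal{L}^p(\Omega)}\le C\|H\|_{\mathcal{L}^2(\Omega)}$.

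For $\nabla^2\varphi$, the scalar divergence-form equation ${\rm div}(\mu\nabla\varphi)=-{\rm div}(\mu\nabla\wedge A)$ has continuous leading coefficient by the Morrey embedding $\mathcal{W}^{1,3+\delta}\hookrightarrow\mathcal{C}^0$ in three dimensions, and its right-hand side lies in $\mathcal{W}^{-1,p}$ by the previous step. Classical Calder\'on--Zygmund $\mathcal{W}^{2,p}$ theory for divergence-form equations with continuous coefficients, or equivalently the same freeze-and-absorb strategy combined with Lemma \ref{lem:3}, delivers the interior $\mathcal{L}^p$-bound on $\nabla^2\varphi$. The Neumann-type boundary condition $(\mu\nabla\varphi)\cdot\boldsymbol{\nu}=-(\mu\nabla\wedge A)\cdot\boldsymbol{\nu}$, inherited from the homogeneous version of \eqref{equ:2.4x}, is handled as in Step 4 of the proof of Proposition \ref{thm:1}: flatten the boundary, estimate tangential derivatives first, and recover the normal second derivative algebraically from the equation.

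The main obstacle will be the quantitative absorption step: the constant $C(p)$ in Lemma \ref{lem:3} grows with $p$, so the admissible freezing radius $R=R(p)$ shrinks, and the cutoff commutator terms must now be bounded in $\mathcal{L}^p$ rather than in $\mathcal{L}^2$. The Rellich--Kondrachov inequality $\|H\|_{\mathcal{L}^p}\le\epsilon\|\nabla H\|_{\mathcal{L}^p}+C\|H\|_{\mathcal{L}^2}$ quoted at the start of this subsection is precisely what allows one to bootstrap the local $\mathcal{L}^p$-estimates on $\nabla H$ back to a single $\mathcal{L}^2$-norm of $H$ on the right-hand side of the final inequality.
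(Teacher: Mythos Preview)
Your plan is correct and mirrors the paper's proof: freeze $\varepsilon^{-1}$ (and $\mu$) at a point, invoke Lemma~\ref{lem:3} on the localized equation, absorb the small-oscillation term by continuity of the coefficient, use the Helmholtz decomposition and the scalar $\varphi$-equation for the divergence part, flatten the boundary as in Step~4 of Proposition~\ref{thm:1}, and bootstrap through Sobolev exponents. The only variation is that you augment the curl--curl system with a $c\,\nabla({\rm div}\,\cdot)$ term to make the frozen operator genuinely elliptic before applying Lemma~\ref{lem:3}; the paper instead applies the lemma to the bare curl--curl weak form and extracts only $\|\nabla\wedge(\eta H)\|_{\mathcal{L}^q}$ at that stage (iterating $q=2\to 6\to\infty$ via nested balls), deferring the full gradient entirely to the $\varphi$-equation --- so in your scheme the separate $\nabla^2\varphi$ estimate is largely redundant once the augmented system already delivers $\|\eta\,\nabla H\|_{\mathcal{L}^p}$.
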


\begin{proof}[Proof of Proposition \ref{thm:2}]


We divide the proof into three steps.

\smallskip
{\bf Step 1.} \emph{Interior regularity.}
For any $x_0\in\Omega$, let $B_R:=B_R(x_0) \subset \Omega$.
Suppose $\eta$ is a cut-off function so that $0\leq\eta\leq1$ in $B_R$, $\eta=1$ in $B_r$ and $\eta=0$ on $\partial B_{R}$, where $0<r<R$. Moreover, $|\nabla\eta|\leq\frac{C}{R-r}$. 

For any $\psi(x) \in \mathcal{H}^1_0(B_R(x_0))$, we have that

\begin{equation}
\begin{aligned}
&\lefteqn{\int_{B_R} \varepsilon^{-1}(x_0)\nabla\wedge(\eta H)\cdot\nabla\wedge\psi(x) dx{}}\\
=&\int_{B_R} \lbrack \varepsilon^{-1}(x_0)-\varepsilon^{-1}(x)\rbrack \nabla\wedge(\eta H)\cdot\nabla\wedge\psi(x) dx
+\int_{B_R} \varepsilon^{-1}(x)\nabla\wedge(\eta H)\cdot\nabla\wedge\psi(x) dx\\
=&\int_{B_R} \lbrack \varepsilon^{-1}(x_0)-\varepsilon^{-1}(x)\rbrack \nabla\wedge(\eta H)\cdot\nabla\wedge\psi(x) dx+
\int_{B_R} \varepsilon^{-1}(x)\nabla\eta\wedge H\cdot\nabla\wedge\psi(x) dx{}\\
&+\int_{B_R} \eta\varepsilon^{-1}(x)\nabla\wedge H\cdot\nabla\wedge\psi(x) dx\\
=&\int_{B_R} \lbrack \varepsilon^{-1}(x_0)-\varepsilon^{-1}(x)\rbrack \nabla\wedge(\eta H)\cdot\nabla\wedge\psi(x) dx+
\int_{B_R} \varepsilon^{-1}(x)\nabla\wedge H\cdot\nabla\wedge(\eta\psi(x)) dx{}\\
&-\int_{B_R} \varepsilon^{-1}(x)\nabla\wedge H\cdot \nabla\eta\wedge\psi(x) dx+\int_{B_R}\varepsilon^{-1}(x)\nabla\eta\wedge H\cdot\nabla\wedge\psi(x) dx\\
=&\int_{B_R} \lbrack \varepsilon^{-1}(x_0)-\varepsilon^{-1}(x)\rbrack \nabla\wedge(\eta H)\cdot\nabla\wedge\psi(x) dx+\int_{B_R}\mathbb{G}_i\psi^idx+\int_{B_R}\mathbb{F}^{m}_i\nabla_{m}\psi^i dx,
\end{aligned}
\end{equation}
where, by equation \eqref{eq1}, we assume that
\begin{equation*}
\begin{aligned}
&\int_{B_R}\mathbb{G}_i\psi^idx=\int_{B_R}k^2 \mu(x)H\cdot\eta\psi dx-\int_{B_R} \varepsilon^{-1}(x)\nabla\wedge H\cdot \nabla\eta\wedge\psi(x)dx,\\
&\int_{B_R}\mathbb{F}^{m}_i\nabla_{m}\psi^i dx=\int_{B_R}\varepsilon^{-1}(x)\nabla\eta\wedge H\cdot\nabla\wedge\psi(x) dx.
\end{aligned}
\end{equation*}
Let $\omega\in \mathcal{H}^1_0(B_R)$ satisfy
\begin{equation}\label{omega1}
-\int_{B_R}\delta^{m\beta}\delta_{ij}\nabla_{\beta}\omega^j_s\nabla_{m}\phi^idx=\int_{B_R}\delta^{m s}\mathbb{G}_i\nabla_{m}\phi^i d x, \quad \forall \phi \in \mathcal{C}^{1,1}(B_R),
\end{equation}
then we can write that
\begin{align}\label{etaHestimate}
\int_{B_R} \varepsilon^{-1}(x_0)\nabla\wedge(\eta H)\cdot\nabla\wedge\psi(x) dx=&
\int_{B_R} \lbrack \varepsilon^{-1}(x_0)-\varepsilon^{-1}(x)\rbrack \nabla\wedge(\eta H)\cdot\nabla\wedge\psi(x) dx\\+&\int_{B_R}\tilde{\mathbb{F}}^{m}_i\nabla_{m}\psi^i dx, \nonumber
\end{align}
where $\tilde{\mathbb{F}}^{m}_i=\mathbb{F}^{m}_i+\omega^i_{m}$.\\

By applying Lemma \ref{lem:3} to equation \eqref{omega1}, for any $p\geq2$, we have that

\begin{equation}
||\omega||_{\mathcal{L}^{p^*}(B_R)}\leq C||\nabla \omega||_{\mathcal{L}^p(B_R)}\leq||\mathbb{G}||_{\mathcal{L}^p(B_R)},
\end{equation}
where $p^*$ is the Sobolev conjugate of $p$, so $\frac{1}{p^*}=\frac{1}{p}-\frac{1}{n}$ .
By applying the H\"older inequality to equation \eqref{etaHestimate}, we obtain that
\begin{equation}
||\nabla\wedge(\eta H)||_{\mathcal{L}^q(B_R)}\leq C||\lbrack \varepsilon^{-1}(x_0)-\varepsilon^{-1}(x)\rbrack \nabla\wedge(\eta H)||_{\mathcal{L}^q(B_R)}+C||\tilde{\mathbb{F}}||_{\mathcal{L}^q(B_R)},
\end{equation}
where $q=p^*$ and we take $p=2$ here, hence $q=2^*=6>2$ since $2^*=\frac{2n}{n-2}$ and $n=3$ in this paper.

Let $R$ be small enough so that $C||\varepsilon^{-1}(x_0)-\varepsilon^{-1}(x)||_{\mathcal{L}^{\infty}(B_R)}<<1$, then
\begin{equation}
\begin{aligned}\label{equ:2.7:curl}
&||\nabla\wedge(\eta H)||_{\mathcal{L}^q(B_R)}\leq  C||\tilde{\mathbb{F}}||_{\mathcal{L}^q(B_R)}\leq C(||\omega||_{\mathcal{L}^q(B_R)}+||\mathbb{F}||_{\mathcal{L}^q(B_R)})\\
\leq & C(||\mathbb{G}||_{\mathcal{L}^2(B_R)}+||\mathbb{F}||_{\mathcal{L}^q(B_R)})\leq \frac{C}{R-r}(||\nabla H||_{\mathcal{L}^2(B_R)}+||H||_{\mathcal{L}^q(B_R)})\leq \frac{C}{R-r}||H||_{\mathcal{H}^1(B_R)}.
\end{aligned}
\end{equation}

Let $H=\nabla\wedge A+\nabla\varphi$, and then we have that 
$$
||\nabla\wedge  A||_{\mathcal{W}^{1,q}(B_r)}\leq C(||\nabla\wedge (\eta H)||_{\mathcal{L}^q(B_R)}+||H||_{\mathcal{L}^q(B_R)})\leq \frac{C}{R-r}||H||_{\mathcal{H}^1(B_R)}.
$$
Let $\tilde{\psi}$ be the test function in $B_r$, such that $\tilde{\psi}\equiv1$ in $B_{r_1}$, where $0<r_1<r$ and $\tilde{\psi}=0$ on $\partial B_r$, then for any $k=1,2,3$, the equation (\ref{equ:varphi3.1}) leads to
\begin{align*}
 &\int_{B_r(x_0)}\mu(x_0)\nabla(\eta\partial_k\varphi)\cdot\nabla\tilde{\psi} dx\\
=&\int_{B_r(x_0)}[\mu(x_0)-\mu(x)]\nabla(\eta\partial_k\varphi)\cdot\nabla\tilde{\psi}{\rm d}x+\int_{B_r(x_0)}\mathbb{G}\tilde{\psi}{\rm d}x+\sum_{i=1}^3\int_{B_r(x_0)}\mathbb{F}_i\partial_i\tilde{\psi} {\rm d}x,
\end{align*}
where
$$
\mathbb{G}=-\mu\partial_k(\nabla\wedge A)\cdot\nabla\eta-\partial_k(\mu)\nabla\wedge A\cdot\nabla\eta-\partial_k\mu\nabla\varphi\cdot\nabla\eta-\mu\nabla(\partial_k\varphi)\cdot\nabla\eta
$$
and
$$
\sum_{i=1}^3\mathbb{F}_i\partial_i\tilde{\psi}=-\mu\eta\partial_k(\nabla\wedge A)\cdot\nabla\tilde{\psi}-\eta\partial_k\mu\nabla\wedge A\cdot\nabla\tilde{\psi}-\eta\partial_k\mu\nabla\varphi\cdot\nabla\tilde{\psi}+\mu\partial_k\varphi\nabla\eta\cdot\nabla\tilde{\psi}.
$$
Similarly, for any $k=1$, $2$, or $3$, when $r$ is sufficiently small, we have
\begin{align*}
||\nabla(\eta\partial_k\varphi)||_{\mathcal{L}^q(B_r)}\leq& C(||\mathbb{G}||_{\mathcal{L}^2(B_r)}+||\mathbb{F}||_{\mathcal{L}^q(B_r)})\\
 \leq& C(||\partial_k(\nabla\wedge A)||_{\mathcal{L}^q(B_r)}+||\partial_k(\nabla\varphi)||_{\mathcal{L}^2(B_r)}+||\nabla\varphi||_{\mathcal{L}^q(B_r)})\\
 \leq&\frac{C}{(R-r)(r-r_1)}||H||_{\mathcal{H}^1(B_R)}.
\end{align*}
and so we obtain that
\begin{equation}\label{lqbdd}
||\nabla H||_{\mathcal{L}^q(B_r)}\leq \frac{C}{R-r}||H||_{\mathcal{H}^1(B_R)}.
\end{equation}

  The following argument improves the inequality \eqref{lqbdd} and gives that $\nabla H\in \mathcal{L}^p(B_{\frac{R}{2}})$, for every $p \geq 2$.  Suppose we replace $2$ and $2^*$ by $l$ and $l^*$ subsequently in the above argument, and redefine $l^*:=\infty$ if $l^*<0$, then we can get that
 \begin{equation}\label{2.9}
 ||\nabla H||_{\mathcal{L}^{l^*}(B_r)}\leq \frac{C}{R-r}||H||_{\mathcal{W}^{1,l}(B_R)}.
 \end{equation}
 Now let us introduce a new Ball with the radius between $\frac{R}{2}$ and $R$ as follows 
 \[\frac{R}{2}<R_1<R. \]
 For any $2\leq q\leq 6=2^*$,
 $$
 ||\nabla H||_{\mathcal{L}^q(B_{\frac{R}{2}})}\leq  C||\nabla H||_{\mathcal{L}^6(B_{\frac{R}{2}})} \leq \frac{C}{R}||H||_{\mathcal{H}^1(B_R)}.
 $$
 For any $p\geq 6$, by the estimate \eqref{2.9}, we get that
%
\begin{equation}
||\nabla H||_{\mathcal{L}^p(B_{\frac{R}{2}})}\leq C||\nabla H||_{\mathcal{L}^{\infty}(B_{\frac{R}{2}})}\leq  \frac{C||\nabla H||_{\mathcal{L}^6(B_{R_1})}}{R_1-\frac{R}{2}} \leq \frac{C||H||_{\mathcal{H}^1}}{(R-R_1)(R_{1}-\frac{R}{2})}.
\end{equation} 
Hence $$ ||\nabla H||_{\mathcal{L}^p(B_{\frac{R}{2}})}\leq C||H||_{\mathcal{H}^1(B_R)}.$$

\medskip
{\bf Step 2.} \emph{Boundary regularity.} Let $x_0\in\partial\Omega$, and by taking the assumption $\partial\Omega$ is $\mathcal{C}^{1,1}$, we can always find a ball $B$ centred at $x_0$ and an orthogonal coordinates transformation $\Phi\in \mathcal{C}^{1,1}(B;\mathrm{R}^3)$ such that  $(y_1,y_2,y_3)=(\Phi_1(x),\Phi_2(x),\Phi_3(x))$, and that $\Phi(B\cap\Omega)=\{y_3>0\}\cap B_R(0)$ in the new coordinates. Let upper and lower indices denote contravariant and covariant components respectively, and
let the components of tensors be marked by "$ \tilde \quad$"  if they are expressed
in the $y_i$-coordinate system, whereas unmarked components refer to the cartesian
coordinates. Then the following transformation rules are valid, see \cite{gw} for more detail.\\
Let
$$\mathscr{E}_i=\sum_kE_k\frac{\partial x_k}{\partial \Phi_i},\quad \mathscr{H}_i=\sum_k H_k\frac{\partial x_k}{\partial \Phi_i}, \quad \tilde{\varepsilon}^{ij}=\sum_{k,l}\varepsilon^{kl}\frac{\partial \Phi_i}{\partial x_k}\frac{\partial \Phi_j}{\partial x_l},\quad \tilde{\mu}^{ij}=\sum_{k,l}\mu^{kl}\frac{\partial \Phi_i}{\partial x_k}\frac{\partial \Phi_j}{\partial x_l} $$
with $i,j,k,l \in \{1,2,3\}$.\\
Let us set that $\mathscr{F}:=\nabla \wedge E$, and $\mathscr{G}:=\nabla \wedge H$ then we have that

\begin{align}
\sum_k \mathscr{F}_k\frac{\partial x_k}{\partial \Phi_i}&=D \cdot 
(\tilde{\nabla}\wedge \mathscr{E})_i,\\
\sum_k \mathscr{G}_k\frac{\partial x_k}{\partial \Phi_i}&=D \cdot (\tilde{\nabla}\wedge \mathscr{H})_i,
\end{align}
where $D={\rm det} (\frac{\partial \Phi_i(x)}{\partial x_j})$ for $i,j=1,2,3$ and $\tilde{\nabla}=(\partial_{y_1},\partial_{y_2},\partial_{y_3})$.


Then we can see that $\mathscr{E}$ and $\mathscr{H}$ satisfy the following equations
\begin{equation}\label{equ:tilde E and tilde H}
\begin{cases}
\tilde{\nabla}\wedge\mathscr{E}=iw\mu_0\tilde{\mu}\mathscr{H}\\
\tilde{\nabla}\wedge\mathscr{H}=-iw\varepsilon_0\tilde{\varepsilon}\mathscr{E}
\end{cases}
\end{equation}
with the boundary conditions on $\{y_3=0\}$ that $\mathscr{E}_1=\mathscr{E}_2=\mathscr{H}_3=0$. Moreover,  we notice that $\nabla\Phi\in \mathcal{W}^{1,\infty}$ which assures that $\tilde{\varepsilon}$ and $\tilde{\mu}$ satisfy all the assumptions which $\varepsilon$ and $\mu$ satisfy.

Then using the half balls in place of the balls, we can follow the exact procedure of {\bf Step 1} to derive the estimates near the boundary. One can treat the boundary integration terms exactly same as the terms we have analyzed in {\bf Step 1} (for $\tilde{\nabla}\wedge\mathscr{H}$) and {\bf Step 4} (for $\tilde{\nabla}\cdot H$) in the proof of Proposition \ref{thm:1}. Hence we omit the details here for the shortness, and we also refer\ucite{ChenWu} and\ucite{Giaquinta} for more detail.
\medskip

{\bf Step 3.} \emph{Global regularity}.
After applying the interior estimates ({\bf Step 1} of this proof), boundary estimates ({\bf Step 2} of this proof) and the standard finite covering technique ({\bf Step 5} of this proof of Proposition \ref{thm:1}) as well as the result of Proposition \ref{thm:1}, one can easily complete the proof of this theorem.

\end{proof}

Next we give the proof of Theorem \ref{thm:inw1p}, in which we obtain the $\mathcal{W}^{1,p}$ estimate for every $p \geq 2$.
\begin{proof}[Proof of the theorem \ref{thm:inw1p}]
Using the interpolation theorem, then one only needs to prove that
$$
||\nabla H||_{\mathcal{L}^{p}(\Omega)}\le C(||H||_{\mathcal{L}^2(\Omega)}+||J_e||_{\mathcal{L}^p(\Omega)}+||\nabla\wedge G||_{\mathcal{L}^p(\Omega)}+||J_m||_{\mathcal{L}^{p}(\Omega)}+||{\rm div}J_m||_{\mathcal{L}^{p}(\Omega)}).
$$
Let us define the terms $\tilde{H}$, $\tilde{E}$, $\tilde{J}_e$ and $\tilde{J}_m$ as in the proof of Theorem \ref{thm:jejm}. Then we only need to prove that
$$
||\nabla \tilde{H}||_{\mathcal{L}^{p}(\Omega)}\le C(||\tilde{H}||_{\mathcal{L}^2(\Omega)}+||\tilde{J}_e||_{\mathcal{L}^p(\Omega)}+||\tilde{J}_m||_{\mathcal{L}^{p}(\Omega)}+||{\rm div}\tilde{J}_m||_{\mathcal{L}^{p}(\Omega)}).
$$
We follow the similar approach as given in the proof of Proposition \ref{thm:2}. Because of the source terms,  $G_i$ and $F_i^{\alpha}$ are now replaced by
\begin{align*}
&\int_{B_R}\tilde{\mathbb{G}}_i\varphi^idx=\int_{B_R}k_1^2\mu \tilde{H}\cdot\eta\varphi dx-\int_{B_R} \varepsilon^{-1}(x)(\nabla\wedge \tilde{H}-\tilde{J}_e)\cdot D\eta\wedge\varphi(x)dx-\int_{B_R}k_2\tilde{J}_m\cdot\eta\varphi dx\\
&\int_{B_R}\tilde{\mathbb{F}}^{\alpha}_iD_{\alpha}\varphi^i dx=\int_{B_R}\varepsilon^{-1}(x)(D\eta\wedge \tilde{H}-\tilde{J}_e)\cdot\nabla\wedge\varphi(x) dx.
\end{align*}

Following the same argument as the proof to the estimate \eqref{equ:2.7:curl}, we can have the estimate of $\nabla\wedge (\eta H)$. At the same time, ${\rm div}(\eta \tilde{H})$ is estimated through equation \eqref{equ:4.4a} by applying the technics developed in {\bf Step 4} of the proof of Proposition \ref{thm:2}, and the inhomogeneous terms are treated using the exact same method used in the proof of Theorem \ref{thm:jejm}. 
Then we can easily obtain the $\mathcal{W}^{1,p}$-estimates. Here we need to use the following different estimate to bound the integration on the boundary
\begin{align*}
&\int_{\partial\Omega\cap B_R(y_0)}\eta^2\overline{\partial_k\varphi}\partial_k\tilde{J}_m\cdot e_3{\rm d}\sigma\\ \leq&||\eta^2\partial_k\varphi||_{\mathcal{W}^{1-\frac{1}{p},p}(\partial\Omega)} ||\partial_k(\tilde{J}_m\cdot\boldsymbol{\nu})||_{\mathcal{W}^{-(1-\frac{1}{p}),q}(\partial\Omega)}\\
\leq&C(||\nabla\varphi||_{\mathcal{W}^{1,p}(\Omega)\cap B_r(y_0)} +||\nabla\varphi||_{\mathcal{L}^{p}(\Omega\cap B_R(y_0))}) ||\tilde{J}_m\cdot\boldsymbol{\nu}||_{\mathcal{W}^{\frac{1}{p},q}(\partial\Omega)},
\end{align*}
where $\frac{1}{p}+\frac{1}{q}=1$.

Since the rest of the proof is similar, we omit them for brevity.
\end{proof}




\subsection{Proof of Theorem \ref{thm:15}}
%

 Before giving the proof, we first address the following three lemmas. The first lemma develops the local energy estimate of the solutions with constant coefficients.
\begin{lem}\label{lem7}
If $V$ satisfies
\begin{equation}\label{eqv}
\nabla\wedge(\varepsilon^{-1}(x_0)\nabla\wedge V)=k^2_1\mu(x_0)V \quad \text{and} \quad {\rm div}(\mu(x_0)V)=0\quad \mbox{in} \, \, B_R(x_0),\end{equation}
then
\begin{equation}\label{equ:2.13v}
\int_{B_r(x_0)}|\nabla V|^2dx\leq C(\frac{r}{R})^n\int_{B_R(x_0)}|\nabla V|^2dx, \quad \text{for all} \quad 0\leq r \leq R
\end{equation}
where $n=3$ is the dimension.
\end{lem}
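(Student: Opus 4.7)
The plan is to reduce Lemma \ref{lem7} to classical interior regularity for constant-coefficient elliptic systems and then derive the decay estimate \eqref{equ:2.13v} from a routine volume comparison. Set $A:=\varepsilon^{-1}(x_0)$ and $M:=\mu(x_0)$, two constant, real, uniformly positive-definite matrices. The key observation is that the curl-curl equation augmented by the divergence-free constraint,
$$\nabla\wedge(A\nabla\wedge V)-\nabla\bigl({\rm div}(MV)\bigr)=k_1^2 MV,$$
is a genuinely elliptic second-order system with constant coefficients, and its solutions coincide with those of \eqref{eqv} since ${\rm div}(MV)\equiv 0$ is imposed. Therefore, by classical interior regularity, any weak $\mathcal{H}^1$ solution $V$ of \eqref{eqv} is $C^\infty$ in the interior of $B_R(x_0)$.

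The central analytic step is the interior gradient bound
$$\sup_{B_{R/2}(x_0)}|\nabla V|^2\leq \frac{C}{R^n}\int_{B_R(x_0)}|\nabla V|^2\,dx.$$
Because the coefficients of \eqref{eqv} are constants, every partial derivative $\partial_i V$ is again a weak solution of the same augmented system in $B_R(x_0)$. Applying the standard $\mathcal{L}^\infty$-in-terms-of-$\mathcal{L}^2$ interior estimate for solutions of constant-coefficient elliptic systems to $\partial_i V$ (either via convolution with the smooth fundamental solution, or via iterated Caccioppoli inequalities combined with Sobolev embedding on the ascending chain of balls $B_{R/2}\subset B_{3R/4}\subset B_R$) then yields the displayed bound, with a constant depending only on $n$, $\lambda_1,\ldots,\lambda_4$, and $k_1$.

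Granted the interior gradient bound, \eqref{equ:2.13v} follows at once. For $0\leq r\leq R/2$,
$$\int_{B_r(x_0)}|\nabla V|^2\,dx\leq |B_r|\,\sup_{B_r(x_0)}|\nabla V|^2\leq C\,r^n\cdot\frac{1}{R^n}\int_{B_R(x_0)}|\nabla V|^2\,dx,$$
which is the desired estimate in dimension $n=3$; for $R/2<r\leq R$ the inequality is immediate after enlarging the constant by a factor of $2^n$.

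The main obstacle is checking that the augmented Maxwell system really is uniformly elliptic with constant depending only on the bounds in \eqref{cond1}, and that the lower-order term $k_1^2 MV$ does not corrupt the $\mathcal{L}^\infty$-over-$\mathcal{L}^2$ interior estimate. The first point is a short linear-algebra computation on the principal symbol of $\nabla\wedge(A\nabla\wedge\,\cdot\,)-\nabla\,{\rm div}(M\,\cdot\,)$, which at frequency $\xi$ can be shown to be uniformly positive-definite using only the bounds \eqref{cond1}. The second point is handled by working on balls small enough (depending on $k_1$, $\lambda_1,\ldots,\lambda_4$) that $k_1$ is not a Maxwell eigenvalue of the frozen-coefficient operator on $B_R$, so that a bounded Green's operator for the constant-coefficient problem exists on this scale.
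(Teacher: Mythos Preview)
Your proposal is correct and follows essentially the same line as the paper's proof: both use that $\partial_i V$ again solves the constant-coefficient system, obtain an $L^\infty$-over-$L^2$ interior bound for solutions via iterated Caccioppoli inequalities plus Sobolev embedding (the paper carries this out by hand after rescaling to the unit ball rather than invoking the augmented elliptic system), and finish by volume comparison together with the trivial factor $2^n$ for $r>R/2$. The eigenvalue concern in your final paragraph is unnecessary if you take the Caccioppoli route you already mention, since the local energy inequality absorbs the zero-order term $k_1^2 MV$ directly with no spectral condition on $k_1$.
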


\begin{proof}
Since $\partial_kV$ satisfies the same equation \eqref{eqv}, we only need to prove
\begin{equation*}\int_{B_r(x_0)}|V|^2dx\leq C(\frac{r}{R})^3\int_{B_R(x_0)}|V|^2dx, \qquad \text{for all} \quad 0\leq r \leq R  \end{equation*}
For the simplicity, we assume that $\mu(x_0)$ is an identity matrix, for the non-identity matrix case we apply the argument in {\bf Step 3} of the proof of Proposition \ref{thm:1} to estimate ${\rm div} V$. 

Let us use the substitution $y=\frac{x-x_0}{R}$ in the previous equation and write 
$$
\tilde{V}=\tilde{V}(y)=V(x),
$$ 
then $B_R(x_0)$ becomes $B_1({\rm O})$. Let $\eta \in \mathcal{C}^{\infty}_0(B_1({\rm O}))$ and $\eta\equiv 1$ in $B_r({\rm O})$, $|\nabla\eta|\leq \frac{C}{1-r}$ and $0 \leq \eta \leq 1$. Let us multiply the equation
\eqref{eqv} by $\eta^2 \overline{\tilde{V}}$ and integrate it by part,
\begin{equation}
\begin{aligned}
&\int_{B_1({\rm O})}\varepsilon^{-1}(x_0)\nabla\wedge\tilde V\cdot \overline{\nabla\wedge(\eta^2\tilde V)}dy=\int_{B_1({\rm O})}k^2_1\eta^2\tilde V\cdot\overline{\tilde V}dy, \\
&\int_{B_1({\rm O})}\eta^2|\nabla\wedge\tilde V|^2dy\leq C\int_{B_1({\rm O})} \eta^2 |\tilde V|^2 dy+C\int_{B_1({\rm O})}|\nabla\eta|^2|\tilde V|^2dy.
\end{aligned}
\end{equation}
It gives that
$$ 
\int_{B_r({\rm O})}|\nabla\wedge\tilde V|^2dy\leq \int_{B_1({\rm O})}\eta^2|\nabla\wedge\tilde V|^2dy \leq C(r)\int_{B_1({\rm O})} |\tilde V|^2 dy.
$$

Furthermore, because ${\rm div}(\tilde V)=0$ in $B_1({\rm O})$, we have that 
$$
\int_{B_r({\rm O})} |\nabla\tilde V|^2 dy\leq C(r)\int_{B_1({\rm O})} |\tilde V|^2 dy.
$$
Therefore, by the induction, we get that
\begin{equation}
\int_{B_{\frac{1}{2}({\rm O})}} |\nabla^{(2)}\tilde V|^2 dy\leq C\int_{B_{\frac{3}{4}({\rm O})}} |\nabla\tilde V|^2 dy\leq C\int_{B_1({\rm O})} |\tilde V|^2 dy.
\end{equation}
Hence 
\begin{equation}\label{3.35x}
||\tilde V||^2_{\mathcal{L}^{\infty}(B_{\frac{1}{2}}({\rm O}))}\leq C\sum_{k=0}^2 \int_{B_{\frac{1}{2}}({\rm O})}|\nabla^{(k)}\tilde V|^2dy\leq C\int_{B_1({\rm O})} |\tilde V|^2 dy.
\end{equation}
If $0<\frac{r}{R}<\frac{1}{2}$, then \eqref{3.35x} leads to
\begin{equation}
\int_{B_\frac{r}{R}({\rm O})} |\tilde V|^2 dy \leq C\int_{B_1({\rm O})} |\tilde V|^2 dy\cdot |B_{\frac{r}{R}}({\rm O})|=C(\frac{r}{R})^3\int_{B_1({\rm O})} |\tilde V|^2 dy.
\end{equation}
It follows that if $0<r<\frac{R}{2}$, then
\begin{equation}
\int_{B_r(x_0)} |V|^2 dx=R^3\int_{B_{\frac{r}{R}({\rm O})}}|\tilde{V}|^2dy \leq C(\frac{r}{R})^3R^3\int_{B_1({\rm O})}|\tilde{V}|^2dy=C(\frac{r}{R})^3\int_{B_R(x_0)} |V|^2 dx,
\end{equation}
where the constant $C$ does not depend on $r$ and $R$.

On the other hand, if $\frac{R}{2}\leq r <R$, then we easily obtain that.

\begin{equation}
\int_{B_r(x_0)} |V|^2 dx \leq 2^3(\frac{r}{R})^3\int_{B_R(x_0)} |V|^2 dx.
\end{equation}
\end{proof}

\begin{rem}We actually proved that if $V$ satisfies
\eqref{eqv},
then
\begin{equation}\label{3.21a}
\int_{B_r(x_0)}|V|^2dx\leq C(\frac{r}{R})^3\int_{B_R(x_0)}|V|^2dx, \quad \text{for all} \quad 0\leq r \leq R.
\end{equation}
\end{rem}

Based on Lemma \ref{lem7}, we further have the following lemma for the solutions of the equation with constant coefficients.
\begin{lem}\label{lem9}
If $V$ satisfies \eqref{eqv} in $B_R(x_0)$, then
\begin{equation}\label{2.19}
\int_{B_r(x_0)}|V-\bar{V}_{x_0,r}|^2dx\le C(\frac{r}{R})^{n+2}\int_{B_R(x_0)}|V-\bar{V}_{x_0,R}|^2 dx,
\end{equation}
where $\bar{V}_{x_0,r}:=\frac{1}{|B_r(x_0)|}\int_{B_r(x_0)}V(x)dx$ and $|B_r(x_0)|$ is the volume of the ball $B_r(x_0)$, and $n=3$ is the dimension.
\end{lem}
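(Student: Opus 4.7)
The plan is to derive \eqref{2.19} by combining three ingredients: since $\mu(x_0)$ and $\varepsilon^{-1}(x_0)$ are constant matrices, each partial derivative $\partial_k V$ solves the same system \eqref{eqv} as $V$, so the $L^2$-decay \eqref{3.21a} from the remark following Lemma \ref{lem7} applies to $\nabla V$; this, together with the Poincar\'e inequality $\int_{B_r(x_0)}|V-\bar{V}_{x_0,r}|^2\,dx \le Cr^2\int_{B_r(x_0)}|\nabla V|^2\,dx$, reduces the problem to a Caccioppoli-type bound
\[
\int_{B_{R/2}(x_0)}|\nabla V|^2\,dx \;\le\; \frac{C}{R^2}\int_{B_R(x_0)}|V-\bar{V}_{x_0,R}|^2\,dx.
\]

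Granting this bound, I would apply \eqref{3.21a} to each $\partial_k V$ and sum to obtain $\int_{B_r(x_0)}|\nabla V|^2\,dx \le C(2r/R)^3\int_{B_{R/2}(x_0)}|\nabla V|^2\,dx$ for $r\le R/2$, then chain with Poincar\'e and the Caccioppoli bound to obtain
\[
\int_{B_r(x_0)}|V-\bar{V}_{x_0,r}|^2\,dx \;\le\; Cr^2\cdot(2r/R)^3\cdot\frac{1}{R^2}\int_{B_R(x_0)}|V-\bar{V}_{x_0,R}|^2\,dx \;\le\; C\Bigl(\frac{r}{R}\Bigr)^{5}\int_{B_R(x_0)}|V-\bar{V}_{x_0,R}|^2\,dx,
\]
which is \eqref{2.19} with $n=3$ in the range $r\le R/2$. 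For $R/2\le r\le R$ the estimate is immediate from $B_r\subset B_R$ and the variational characterisation of $\bar{V}_{x_0,r}$ as the minimiser of $c\mapsto\int_{B_r}|V-c|^2$, since this gives $\int_{B_r}|V-\bar{V}_{x_0,r}|^2 \le \int_{B_R}|V-\bar{V}_{x_0,R}|^2 \le 2^{n+2}(r/R)^{n+2}\int_{B_R}|V-\bar{V}_{x_0,R}|^2$.

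The main technical obstacle is the Caccioppoli bound. I would test \eqref{eqv} against $\eta^2\overline{(V-c)}$, where $\eta\in\mathcal{C}_c^\infty(B_R(x_0))$ is a cutoff equal to $1$ on $B_{R/2}(x_0)$ with $|\nabla\eta|\le C/R$, and where $c$ is the $\mu(x_0)\eta^2$-weighted mean of $V$ on $B_R$. Integration by parts produces the energy $\int\eta^2\varepsilon^{-1}(x_0)|\nabla\wedge V|^2\,dx$, a cross term $\int\eta\,\varepsilon^{-1}(x_0)(\nabla\wedge V)\cdot\overline{(\nabla\eta\wedge(V-c))}\,dx$ absorbed by Cauchy-Schwarz with a small parameter, and the zeroth-order contribution $k_1^2\int\mu(x_0)V\cdot\overline{\eta^2(V-c)}\,dx$. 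The last term splits as $k_1^2\int\eta^2\mu(x_0)|V-c|^2\,dx + k_1^2 c\cdot\int\mu(x_0)\eta^2\overline{(V-c)}\,dx$; the choice of $c$ annihilates the second piece, while the first is either absorbed into the $C/R^2$ contribution when $R$ is bounded by a fixed $R_0$, or handled by a Poincar\'e absorption. The divergence constraint ${\rm div}(\mu(x_0)V)=0$ is then used with the cutoff to promote the bound on $\nabla\wedge V$ to a full gradient bound in the standard Helmholtz manner. Finally, a direct computation based on $\int_{B_R}(V-\bar{V}_{x_0,R})\,dx=0$ gives $|c-\bar{V}_{x_0,R}|^2\le CR^{-n}\int_{B_R}|V-\bar{V}_{x_0,R}|^2\,dx$, and therefore $\int_{B_R}|V-c|^2\,dx\le C\int_{B_R}|V-\bar{V}_{x_0,R}|^2\,dx$, closing the Caccioppoli inequality. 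The essential subtlety is that the naive choice $c=\bar{V}_{x_0,R}$ generates an uncontrolled constant source $k_1^2\mu(x_0)\bar{V}_{x_0,R}$, since $|\bar{V}_{x_0,R}|$ is not bounded by $\|V-\bar{V}_{x_0,R}\|_{\mathcal{L}^2(B_R)}$ in general; the weighted-mean replacement is designed precisely to kill this term.
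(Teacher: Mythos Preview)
Your proof is correct and follows the same three-step architecture as the paper: Poincar\'e on $B_r$, the $L^2$-decay of Lemma~\ref{lem7} applied to $\partial_k V$, and a Caccioppoli inequality linking $\int_{B_{R/2}}|\nabla V|^2$ to $R^{-2}\int_{B_R}|V-\bar V_{x_0,R}|^2$. The paper simply tests against $\eta^2(V-\bar V_{x_0,R})$ and asserts the Caccioppoli bound; you go further and flag the zeroth-order cross term $k_1^2\mu(x_0)c\cdot\int\eta^2\overline{(V-c)}\,dx$ as potentially uncontrolled when $c=\bar V_{x_0,R}$, and you remove it by choosing $c$ to be the $\eta^2$-weighted mean so that $\int\eta^2(V-c)\,dx=0$, afterwards comparing $c$ back to $\bar V_{x_0,R}$. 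That refinement is sound and indeed tightens a step the paper leaves implicit; otherwise the two arguments coincide.
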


\begin{proof} Notice that $\partial_kV$ satisfies the same equation as $V$. If $r<\frac{R}{2}$,  then by the Poincare inequality and Lemma \ref{lem7}, we get that
\begin{equation}\label{2.20}
\int_{B_r(x_0)}|V-\bar{V}_{x_0,r}|^2dx\le Cr^2\int_{B_r(x_0)}|\nabla V|^2dx \leq C\frac{r^{n+2}}{R^n}\int_{B_R(x_0)}|\nabla V|^2 dx.
\end{equation}
Now let us introduce a cut-off function $\eta$ and multiply on both sides of the given equation by $\eta^2(V-\bar{V}_{x,R})$. After an integration by part, we get that
\begin{equation}
Cr^2(\frac{r}{R})^n\int_{B_R(x_0)}|\nabla V|^2dx\le Cr^2(\frac{r}{R})^n\frac{1}{R^2}\int_{B_R(x_0)}|V-\bar{V}_{x,R}|^2dx.
\end{equation}
and it is obvious in the case of $r>\frac{R}{2}$.
\end{proof}

The next lemma is classical and the previous methods used could be referred to
as Caccioppoli estimates. This lemma is used to absorb the error terms described by the small parameter $\delta$.
\begin{lem}\label{lem6}
If $\Phi(\rho)$ is monotone increasing and satisfies that
$$
\Phi(\rho)\leq A\lbrack(\frac{\rho}{R})^{\tau_1}+\delta\rbrack \Phi(R)+BR^{\tau_2},
$$
for every $\rho$, such that $ 0<\rho< R\leq R_0$, and $\tau_2<\tau_1$, then let $\delta$ be small enough such that there exists $\tau_2<\nu<\tau_1$, and gives that
$$
\Phi(\rho)\leq C\lbrack(\frac{\rho}{R})^{\nu}\Phi(R)+B\rho^{\tau_2}\rbrack.
$$
\end{lem}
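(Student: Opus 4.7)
The plan is to prove this by a dyadic iteration, which is the standard Campanato-type argument. First I would fix a ratio $\tau\in(0,1)$ small enough that
$$
2A\tau^{\tau_1}\le \tau^{\nu},
$$
which is possible since $\nu<\tau_1$ (so $\tau^{\tau_1}/\tau^{\nu}=\tau^{\tau_1-\nu}\to 0$ as $\tau\to 0$). Then I would choose the smallness threshold on $\delta$ so that $A\delta\le\tfrac{1}{2}\tau^{\nu}$. Substituting $\rho=\tau R$ into the hypothesis, these two choices give the one-step contraction
$$
\Phi(\tau R)\le \tau^{\nu}\Phi(R)+BR^{\tau_2}.
$$

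Next I would iterate this inequality on the geometric sequence of radii $\tau^k R$. A straightforward induction yields
$$
\Phi(\tau^{k}R)\le \tau^{k\nu}\Phi(R)+BR^{\tau_2}\sum_{j=0}^{k-1}\tau^{j\nu}\tau^{(k-1-j)\tau_2}.
$$
Because $\nu>\tau_2$, the geometric sum is controlled by $\tau^{(k-1)\tau_2}\sum_{j\ge 0}\tau^{j(\nu-\tau_2)}\le C\tau^{k\tau_2}$, so
$$
\Phi(\tau^{k}R)\le \tau^{k\nu}\Phi(R)+C B(\tau^{k}R)^{\tau_2}.
$$
This is the desired estimate on the dyadic scale $\rho=\tau^{k}R$.

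Finally, to pass to arbitrary $\rho\in(0,R]$, I would choose $k\in\mathbb{N}$ with $\tau^{k+1}R<\rho\le\tau^{k}R$ and use the monotonicity of $\Phi$ to write $\Phi(\rho)\le \Phi(\tau^{k}R)$. Noting $\tau^{k}\le \tau^{-1}(\rho/R)$ converts the right-hand side into the stated form $C[(\rho/R)^{\nu}\Phi(R)+B\rho^{\tau_2}]$, with a constant $C$ depending only on $A$, $\tau_1$, $\tau_2$, $\nu$ (through $\tau$). The main technical point is getting the geometric sum to close, which is precisely where the gap $\nu>\tau_2$ is used; the choice of $\tau$ and the smallness of $\delta$ are arranged exactly to absorb both the $\tau^{\tau_1}$ and the $\delta$ contributions into $\tfrac{1}{2}\tau^{\nu}$ each. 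No step is genuinely hard once $\tau$ and $\delta$ are chosen in the correct order.
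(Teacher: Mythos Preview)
Your proof is correct and is essentially identical to the paper's argument: the paper also fixes a ratio $\theta\in(0,1)$ with $2A\theta^{\tau_1}=\theta^{\nu}$, imposes the equivalent smallness condition $\delta\theta^{-\tau_1}\le 1$ (i.e., $A\delta\le\tfrac12\theta^{\nu}$), derives the one-step bound $\Phi(\theta R)\le\theta^{\nu}\Phi(R)+BR^{\tau_2}$, iterates on $\theta^k R$, and then uses monotonicity to pass to arbitrary $\rho$. Your write-up is in fact cleaner and more explicit about the geometric sum than the paper's.
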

\begin{proof}
For all $\theta\in(0,1)$, $\Phi(\theta R)\leq A\theta^{\tau_1}\lbrack 1+ \theta^{-\tau_1}\delta\rbrack\Phi(R)+BR^{\tau_2}$. Let $\tau_2<\nu<\tau_1$. Suppose that $2A\theta^{\tau_1}=\theta^{\nu}$, and let $\delta$ be small enough such that $\delta \theta^{-\tau_1}\leq1$, then it gives that
\[
\Phi(\theta R)\le 2A\theta^{\tau_1}\Phi(R)+BR^{\tau_2}\le \theta^{\nu} \Phi(R)+BR^{\tau_2}.
\]
Let us take $k$ for any $0<r<R$, such that $\theta^{1+k}R<r<\theta^kR$, then it follows that
\[
\Phi(r)\le\Phi(\theta^kR)\le\theta^{k\nu}\Phi(R)+B\theta^{(k-1)\tau_2}R^{\tau_2}\frac{1}{\theta^{\tau_2}-\theta^{\nu}}\le C(\frac{r}{R})^{\tau_2}((\frac{r}{R})^{\nu-\tau_2}\Phi(R)+BR^{\tau_2}),
\]
and so it leads to the conclusion.
\end{proof}

Now we are ready to prove Theorem \ref{thm:15}.

\begin{proof}[Proof of theorem \ref{thm:15}]

{\bf Step 1} \, Consider $(V, \tilde{E})$ be the solution of the system:

\begin{equation}\left\{
\begin{array}{rl}
\nabla \wedge \tilde{E}-i \omega \mu_0 \mu(x_0)V =0 \quad &\text{in}\, B_R(x_0),\\
\nabla \wedge V + i \omega \varepsilon_0 \varepsilon(x_0)\tilde{E} =0 \quad &\text{in}\, B_R(x_0),\\
\tilde{E}\wedge \boldsymbol{\nu} = E \wedge \boldsymbol{\nu} \quad &\text{on} \, \partial B_R(x_0).
\end{array} \right.
\end{equation}
Note that $J_e=J_m=0$, then $V$ satisfies the equation (\ref{eqv}) in $B_R(x_0)$, and the following boundary condition
\begin{align*}
(\varepsilon^{-1}(x_0)\nabla \wedge V)\wedge  \boldsymbol{\nu} = -i \omega \varepsilon_0\tilde{E}\wedge  \boldsymbol{\nu} = -i\omega\varepsilon_0 E\wedge  \boldsymbol{\nu}
=(\varepsilon^{-1}(x)\nabla \wedge H) \wedge  \boldsymbol{\nu}   \qquad \text{on} \, \partial B_R(x_0)
\end{align*}

The existence of $V$ is guaranteed by Corollary 4.19 of the book \cite{pm}, since $k_1$ is not a Maxwell eigenvalue.

Let $W=H-V$. Let $B_R:=B_R(x_0)$. Let $B_r:=B_r(x_0)$. Then
\begin{equation}\label{3.36x}
\int_{B_r}|\nabla H|^2dx \le 2\int_{B_r}|\nabla W|^2dx+2\int_{B_r}|\nabla V|^2dx.
\end{equation}
Because the coefficients of the equations  \eqref{eqv} are constants, by Lemma \ref{lem7}, we can derive the inequality as follows
\begin{equation}\label{E3.54}
\int_{B_r}|\nabla H|^2dx\le C(\frac{r}{R})^3\int_{B_R}|\nabla V|^2dx+C\int_{B_R}|\textrm{div} W|^2dx+C\int_{B_R}|\nabla\wedge W|^2dx.
\end{equation}

{\bf Step 2}
Note that by the boundary condition of $V$ and Proposition \ref{thm:2}, $H$ and $V$ are in $\mathcal{W}^{1,p}(B_R)$. It provides that $W\in \mathcal{W}^{1,p}(B_R)$, with the boundary condition $W\wedge\boldsymbol{\nu}=0$ on $\partial B_{R}$. Further, $W$ satisfies the following equations in $B_R$,
\begin{equation}\label{thm5p1}
\nabla \wedge [\varepsilon^{-1}(x_0)\nabla \wedge W]=k_1^2\mu(x_0)W+k_1^2(\mu(x)-\mu(x_0))H+\nabla\wedge[(\varepsilon^{-1}(x_0)-\varepsilon^{-1}(x))\nabla \wedge H],
\end{equation}
and
\begin{equation}\label{thm5p2}
{\rm div}(\mu(x_0)W)={\rm div}((\mu(x_0)-\mu(x))H),
\end{equation}
with the boundary condition
\begin{equation}\label{thm5pb1}
(\varepsilon^{-1}(x_0)\nabla\wedge W)\wedge  \boldsymbol{\nu}=((\varepsilon^{-1}(x_0)-\varepsilon^{-1}(x))\nabla \wedge H)\wedge  \boldsymbol{\nu} \qquad \text{on} \, \partial B_R.
\end{equation}
Multiply $\overline{W}$ on the both sides of the equation (\ref{thm5p1}) and integrate it by parts, then
\begin{align*}
&  \lambda \int_{B_R}|\nabla\wedge W|^2dx \\
& \leq \int_{B_R}(\varepsilon^{-1}(x_0)\nabla \wedge W ) \nabla \wedge\overline{W} dx\\
&  = \int_{B_R}\overline{W}(\nabla \wedge (\varepsilon^{-1}(x_0)\nabla \wedge W ))dx+\int_{\partial B_R}\overline{W}(\varepsilon^{-1}(x_0)\nabla \wedge W )\wedge  \boldsymbol{\nu} ds \\
&  = \int_{B_R}k_1^2 \mu(x_0)W\overline{W}dx+\int_{B_R}k_1^2(\mu(x)-\mu(x_0))H\overline{W}dx\\
& +\int_{B_R}((\varepsilon^{-1}(x_0)-\varepsilon^{-1}(x))\nabla \wedge H) \nabla \wedge \overline{W}dx \\
&-\int_{\partial B_R}((\varepsilon^{-1}(x_0)-\varepsilon^{-1}(x))\nabla \wedge H ) \wedge  \boldsymbol{\nu} \overline{W}ds \\
& +  \int_{\partial B_R}\overline{W}(\varepsilon^{-1}(x_0)\nabla \wedge W)\wedge  \boldsymbol{\nu} ds.
\end{align*}
By the condition (\ref{thm5pb1}), we know that if $\mu \in C^{\alpha}(\Omega)$ and $\varepsilon \in C^{\alpha}$, then

\begin{align}
& \int_{B_R}|\nabla \wedge W|^2dx\label{3.41x2} \\
& \leq C \int_{B_R}|W|^2 dx+CR^{2\alpha}\int_{B_R}|H|^2dx+CR^{2\alpha}\int_{B_R}|\nabla \wedge H|^2dx\nonumber\\
& \leq CR^{2\alpha}\int_{B_R}(|H|^2+|\nabla \wedge H|^2)dx+C R^2||W||^2_{\mathcal{H}^1}\nonumber.
\end{align}

{\bf Step3 } Note that by equation (\ref{thm5p2}),we can obtain that
\begin{equation}
{\rm div}W={\rm div}((I-\mu(x_0))W)+{\rm div}((\mu(x_0)-\mu(x))H).
\end{equation}
If $|I-\mu(x_0)|< \delta_0$, $\mu \in C^{\alpha}$, and $\mu \in W^{1,3+\delta}$, then
\begin{align*}
 \int_{B_R}|{\rm div} W|^2dx  & \leq C\delta_0\int_{B_R}|\nabla W|^2dx+CR^{2\alpha}\int_{B_R}|\nabla H|^2dx +C\int_{B_R}|\nabla \mu|^2|H|^2dx,
\end{align*}
hence it can be written that 
\begin{equation*}
\int_{B_R}|{\rm div} W|^2dx\leq C\delta_0\int_{B_R}|\nabla \wedge W|^2dx+CR^{2\alpha}\int_{B_R}|\nabla H|^2dx+CR^{\frac{2\delta}{9+3\delta}}\int_{B_R}|\nabla H|^2dx. 
\end{equation*}
Let $\alpha^*={\rm min}\{\alpha,\frac{\delta}{9+3\delta}\}$, then by \eqref{3.41x2} (we write $\alpha$ instead of $\alpha^*$ in the following equations for the convenience )
\begin{equation}\label{E3.60}
\int_{B_R}|\nabla W|^2dx \leq CR^{2\alpha}||H||^2_{\mathcal{H}^1}+CR^2||W||^2_{\mathcal{H}^1}.
\end{equation}

{\bf Step 4} By \eqref{E3.54}, we now know that
\begin{align*}
\int_{B_r}|\nabla H|^2dx  \leq C [(\frac{r}{R})^3+R^{2\alpha}]\int_{B_R}(|\nabla H|^2+|H|^2)dx+CR^2||W||^2_{\mathcal{H}^1},
\end{align*}
and similarly,
\begin{align*}
\int_{B_r}|H|^2dx \leq&C(\frac{r}{R})^3\int_{B_R}| V|^2dx+C\int_{B_R}|W|^2dx \\
\leq & C [(\frac{r}{R})^3+R^{2\alpha}]\int_{B_R}(|\nabla H|^2+|H|^2)dx+CR^2||W||^2_{\mathcal{H}^1}.
\end{align*}

Let's set that
$$
\Phi(r)=\int_{B_r}(|\nabla H|^2+|H|^2)dx,
$$
and set
$\delta:=R^{2\alpha}$ where $R$ is sufficiently small such that $\delta$ is small. By Lemma \ref{lem6}, if $||\nabla W||^2_{\mathcal{L}^2(B_R)}<CR^{1+\alpha}$ for any $0<R\leq R_0$, then we obtain that
\[
\Phi(r) \leq C\left[(\frac{r}{R})^{ \nu}\Phi(R)+Cr^{3+\alpha}\right],
\]
for any $1+2\alpha <  \nu<3$. It means
$$
\int_{B_r}(|\nabla H|^2+|H|^2)dx\leq C(\frac{r}{R})^{ \nu}\int_{B_R}(|\nabla H|^2+|H|^2)dx+Cr^{3+\alpha}.
$$

Moreover, if $||\nabla H||^2_{\mathcal{L}^2(B_R)}<CR^{\nu}$ for any $0<R\leq R_0$, then by \eqref{E3.60}, we have that
\begin{equation}
\int_{B_R}|\nabla W|^2dx \leq CR^{2\alpha+\nu}||H||^2_{\mathcal{H}^1(B_{R_0})}+CR^{3+2\alpha},\quad \text{for any} \,0<r\leq R\leq R_0
\end{equation}

{\bf Step 5}
Note that
\begin{equation}\label{eqforthm5}
\begin{aligned}
\int_{B_r}|\nabla H-(\bar{\nabla H}_{x_0,r})|^2dx \le & \int_{B_r}|\nabla V-(\bar{\nabla H}_{x_0,r})|^2dx+\int_{B_r}|\nabla W|^2dx\\
                                           \le  & \int_{B_r}|\nabla V-(\bar{\nabla V}_{x_0,r})|^2dx+C\int_{B_r}|\nabla W|^2dx,
\end{aligned}
\end{equation}
where $\bar{\nabla H}_{x,r}:=\frac{1}{|B_r(x)|}\int_{B_r(x)}\nabla H(z)dz$.
Moreover,
$$
 \int_{B_R}|(\bar{\nabla H}_{x_0,R})-(\bar{\nabla V}_{x_0,R})|^2dx=|B_R|\left| \frac{1}{|B_R|}\int_{B_R}\nabla W dx\right|^2\leq\int_{B_R}|\nabla W|^2 dx.
$$
After an application of the Lemma \ref{lem9} with $n=3$, we get that for any $0<r\leq R\leq R_0$,
\begin{equation}\label{3.27a}
\begin{aligned}
 & \int_{B_r}|\nabla V-(\bar{\nabla V}_{x_0,r})|^2dx+C\int_{B_r}|\nabla W|^2dx \\
 \le & C(\frac{r}{R})^{5}\int_{B_{R}}|\nabla V-(\bar{\nabla V}_{x_0,R})|^2dx+C\int_{B_{R}}|\nabla W|^2dx \\
 \le & C(\frac{r}{R})^{5}\int_{B_R}|\nabla V-(\bar{\nabla H}_{x_0,R})|^2dx+C\int_{B_R}|\nabla W|^2dx \\
  \le & C(\frac{r}{R})^{5}\int_{B_R}|\nabla H-(\bar{\nabla H}_{x_0,R})|^2dx+C\int_{B_R}|\nabla W|^2dx \\
  \le & C(\frac{r}{R})^{5}\int_{B_R}|\nabla H-(\bar{\nabla H}_{x_0,R})|^2dx +CR^{2\alpha+\nu}||H||^2_{\mathcal{H}^1(B_{R_0})}+CR^{3+2\alpha}, 
\end{aligned}
\end{equation}
where the constant $C$ does not depend on $r$ and $R$.
Let
$$
\Phi(r)=\int_{B_r(x_0)}|\nabla H-(\bar{\nabla H}_{x,r})|^2dx,
$$
and choose $\nu$ such that $2\alpha+\nu>3$.  Let
$$
\sigma:=\min(2\alpha+\nu,3+2\alpha,5)\in(3,5),
$$
then by Lemma \ref{lem6},
$$
\int_{B_r}|\nabla H-(\bar{\nabla H}_{x,r})|^2dx \le C(\frac{r}{R})^{\sigma}\int_{B_R}|\nabla H-(\bar{\nabla H}_{x,R})|^2dx+Cr^{\sigma},
$$
and hence
\[
||\nabla H||_{\mathfrak{L}^{2,\sigma}(B_r)} \le C.
\]
By Lemma \ref{lem:16} which is listed in the Appendix, we obtain that
$$
||\nabla H||_{\mathcal{C}^{0,\alpha}(B_r)}\le C,
$$
where $\alpha=\frac{\sigma-n}{2}>0$. Therefore we get that
$$
H \in \mathcal{C}^{1+\alpha}(B_r).
$$

\medskip
{\bf Step 6}. The above steps proved that for any given $x_0\in\Omega$, if there exists $R_0$ such that
\begin{equation}\label{3.52x2}
\|\nabla W\|^2_{\mathcal{L}^2(B_{R}(x_0))}<CR^{1+\alpha}\quad\mbox{and}\quad ||\nabla H||^2_{\mathcal{L}^2(B_R(x_0))}<CR^{\nu},
\end{equation}
for any $0<R\leq R_0$, then we know the solution $H\in\mathcal{C}^{1+\alpha}(B_R(x_0))$. Let $\Omega_h$ be the set of all the points in $\Omega$ such that \eqref{3.52x2} holds. Note that $\|\nabla W\|^2_{\mathcal{L}^2(B_{R}(x_0))}$ and $||\nabla H||^2_{\mathcal{L}^2(B_R(x_0))}$ are continuous with respect to $x_0$, so $\Omega_h$ is open. Following the Lebesgue differentiation theorem, we know that for almost every point in $\Omega$, \eqref{3.52x2} holds, since $|B_R(x_0)|=\frac{4\pi}{3}R^3$ and $(\nabla H, \nabla W)\in (\mathcal{L}^2(\Omega))^2$. Therefore, we have that
$$
meas(\Omega\backslash\Omega_h)=0.
$$

Then we can finish the proof of the theorem 5.
\end{proof}

\subsection{Proof of theorem \ref{thm:E4i} and \ref{them:10a}}

Using the symmetrical structure of the system, we can show a similar interior estimate to the solution $E$. However, one needs to take care of the different boundary condition satisfied by $E$ and $H$, namely the solution $E$ satisfies $E\wedge \boldsymbol{\nu}=G\wedge \boldsymbol{\nu} $ on the boundary and $H$ satisfies 
$(\varepsilon^{-1}\nabla\wedge H)\wedge \boldsymbol{\nu}=(\varepsilon^{-1}J_e)\wedge\boldsymbol{\nu}-k_2 G\wedge\boldsymbol{\nu}$ on the boundary. Hence, in this subsection, we give the detail of the argument to derive the boundary estimates of the solution $E$ of the homogeneous equations in the proof of the propositions (\ref{thm:E4}), and the argument corresponding to the inhomogeneous source terms can be achieved from the proof of theorem \ref{thm:jejm}--\ref{thm:15} by applying the symmetrical structure of the system. 

The regularity results we developed for the solution $E$ of the homogeneous Maxwell's equations are given as below.
\begin{prop}[Regularity for $E$--Homogeneous]\label{thm:E4} Let $\varepsilon(x)\in \mathcal{W}^{1,3+\delta}(\Omega)^{3 \times 3}$, where $\delta>0$.  Suppose $\mu^{-1}(x)\in \mathcal{L}^{\infty}(\Omega)^{3 \times 3}$ and that the condition \eqref{cond1} holds.
If $E$ is a weak solutions of the following equations,
\begin{equation}\label{eqE1}
\nabla\wedge(\mu^{-1}(x)\nabla\wedge E)=k^2\varepsilon(x)E, \qquad {\rm div} (\varepsilon E)=0\qquad\text{in }\Omega,
\end{equation}
with the boundary condition $E\wedge\boldsymbol{\nu}=0$ on $\partial\Omega$, then
$E\in \mathcal{H}^1(\Omega)$ and
\begin{equation}\label{equ:3.56E}
||E||_{\mathcal{H}^1(\Omega)}\le C||E||_{\mathcal{L}^2(\Omega)}.
\end{equation}
Moreover, if $\mu^{-1}(x)\in \mathcal{C}^0(\Omega)^{3\times3}$ and satisfies the condition \eqref{cond1},
then the following inequality holds
\begin{equation}\label{EreguW}
||E||_{\mathcal{W}^{1,p}(\Omega)}\leq C||E||_{\mathcal{L}^2(\Omega)},\qquad p \geq 2,
\end{equation}
where the constant $C$ does not depend on the solutions $E$; and finally assume $k$ is not a Maxwell eigenvalue, and if $\mu^{-1}(x)\in \mathcal{C}^{\beta}(\Omega)^{3 \times 3}$,and $||\varepsilon(x)-I||_{\mathcal{L}^{\infty}(\Omega)^{3 \times 3}} < \delta_0$, then there exists an open set $\Omega_h\subset\Omega$ and $\alpha \in (0,1)$, such that $meas(\Omega\backslash\Omega_h)=0$, and for any $x_0\in\Omega_h$, there exists $r>0$ and $C>0$ such that

\begin{equation} \label{EreguC}
||E||_{\mathcal{C}^{1,\alpha}(B_r(x_0))}\le C(1+||E||_{\mathcal{L}^2(\Omega)}).
\end{equation}
\end{prop}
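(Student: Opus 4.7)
The plan is to exploit the symmetry of the Maxwell system by exchanging $\varepsilon \leftrightarrow \mu$ and $E \leftrightarrow H$: under this swap, equation \eqref{eqE1} becomes \eqref{eq1}, so the interior portions of the proofs of Propositions \ref{thm:1}, \ref{thm:2}, and Theorem \ref{thm:15} transfer verbatim. What genuinely differs is the boundary condition: $E\wedge\boldsymbol{\nu}=0$ is of tangential (Dirichlet) type, whereas the condition $(\varepsilon^{-1}\nabla\wedge H)\wedge\boldsymbol{\nu}=0$ acted in effect as a Neumann condition at the level of the Helmholtz decomposition. I would first derive the $\mathcal{L}^2$-bound on $\nabla\wedge E$ by pairing \eqref{eqE1} with $\overline{E}$ and integrating by parts; by the cyclic identity $(a\wedge b)\cdot c = a\cdot(b\wedge c)$, the boundary term equals $\int_{\partial\Omega}\mu^{-1}\nabla\wedge E\cdot(\boldsymbol{\nu}\wedge\overline{E})\,d\sigma$, which vanishes because $E\wedge\boldsymbol{\nu}=0$, yielding the analogue of \eqref{curlH}.

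Next I would use a Helmholtz decomposition tailored to the boundary condition: write $E = \nabla\psi + \nabla\wedge B$ with $\psi\in\mathcal{H}^1_0(\Omega)$ and $\mathrm{div}\,B=0$. The choice $\psi|_{\partial\Omega}=0$ automatically yields $\nabla\psi\wedge\boldsymbol{\nu}=0$ on $\partial\Omega$, hence $(\nabla\wedge B)\wedge\boldsymbol{\nu}=0$ there as well. Since $\mathrm{div}(\varepsilon E)=0$ (a direct consequence of the second Maxwell equation in \eqref{eqE1}), $\psi$ solves the scalar Dirichlet problem
$$\mathrm{div}(\varepsilon\nabla\psi) = -\mathrm{div}(\varepsilon\nabla\wedge B)\quad\text{in }\Omega,\qquad \psi|_{\partial\Omega}=0,$$
while $\|\nabla\wedge B\|_{\mathcal{W}^{1,2}(\Omega)}\le C(\|\nabla\wedge E\|_{\mathcal{L}^2}+\|E\|_{\mathcal{L}^2})$ follows exactly as in \eqref{curlA}. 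Interior $\mathcal{H}^1$-estimates for $\psi$ are obtained by repeating Step 3 of Proposition \ref{thm:1} with $\varepsilon$ in place of $\mu$.

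For the boundary $\mathcal{H}^1$-estimate, after flattening via $y=\Phi(x)$, the condition $\psi=0$ on $\{y_3=0\}$ implies that the tangential derivatives $\partial_{y_k}\psi$ ($k=1,2$) also vanish there. Multiplying the flattened equation by $\eta^2\partial_k\overline{\psi}$ and integrating by parts, all boundary integrals vanish automatically; in contrast to Step 4 of Proposition \ref{thm:1}, there is no need to match data on $\{y_3=0\}$, which actually simplifies the analysis. The normal second derivative $\partial_{y_3}^2\psi$ is controlled through the equation itself by the tangential second derivatives, yielding \eqref{equ:3.56E}. For the $\mathcal{W}^{1,p}$ upgrade \eqref{EreguW}, I would repeat the freezing-of-coefficients argument of Proposition \ref{thm:2} (freezing $\mu^{-1}$ at $x_0$, exploiting its continuity, and invoking Lemma \ref{lem:3} together with Stampacchia interpolation), the boundary treatment again being simplified by the Dirichlet condition on $\psi$.

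For the $\mathcal{C}^{1,\alpha}$ conclusion \eqref{EreguC}, the almost-everywhere, interior nature of the statement means no new boundary analysis is required: the Campanato-type comparison of Theorem \ref{thm:15}, built on Lemmas \ref{lem7}, \ref{lem9} and \ref{lem6}, transfers under the symmetric swap, comparing $E$ to the solution $V$ of the frozen-coefficient homogeneous Maxwell system on $B_R(x_0)$ with data $V\wedge\boldsymbol{\nu}=E\wedge\boldsymbol{\nu}$ on $\partial B_R(x_0)$, whose existence is guaranteed by $k$ not being a Maxwell eigenvalue. The main obstacle in executing the whole scheme is justifying the decomposition $E=\nabla\psi+\nabla\wedge B$ with Dirichlet $\psi$ together with the correct $\mathcal{L}^p$ mapping bounds for $B$ in the setting of $\mathcal{C}^{1,1}$ domains; this should follow from the Amrouche-type results cited in Lemma \ref{lem16}, but one must carefully invoke the tangential-trace version of the decomposition rather than the normal-trace version used for $H$.
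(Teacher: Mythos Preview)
Your overall strategy is sound and the proof would go through, but your boundary treatment differs genuinely from the paper's. Both arguments agree on the curl bound (pair with $\overline{E}$, boundary term vanishes by $E\wedge\boldsymbol\nu=0$) and on the interior estimates (swap $\varepsilon\leftrightarrow\mu$ in Step~3 of Proposition~\ref{thm:1}). Near $\partial\Omega$, however, the paper does \emph{not} run a Helmholtz decomposition with a Dirichlet scalar $\psi$. Instead it flattens the boundary so that $\tilde E_1=\tilde E_2=0$ on $\{y_3=0\}$ and works component-wise: differentiating $\mathrm{div}(\tilde\varepsilon\tilde E)=0$ in $y_1$ and using the identities $\partial_{y_1}\tilde E_2=\partial_{y_2}\tilde E_1+(\nabla\wedge\tilde E)_3$, $\partial_{y_1}\tilde E_3=\partial_{y_3}\tilde E_1-(\nabla\wedge\tilde E)_2$ yields a scalar divergence-form equation for $\tilde E_1$ alone with homogeneous Dirichlet data; likewise for $\tilde E_2$; then $\nabla\tilde E_3$ is recovered algebraically from the curl relations and the equation $\mathrm{div}(\tilde\varepsilon\tilde E)=0$. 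This produces an inequality of the form $\int_{B_r\cap\Omega}|\nabla E|^2\le C_1 R^{\gamma}\int_{B_R\cap\Omega}|\nabla E|^2+C(R)\int_{B_R\cap\Omega}|E|^2$, and the small-$R$ prefactor is absorbed after a partition of unity.

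Your route---decompose $E=\nabla\psi+\nabla\wedge B$ with $\psi\in H^1_0(\Omega)$ and treat $\psi$ as a scalar Dirichlet problem---is a legitimate alternative, and arguably more parallel to the $H$-argument. What it buys is uniformity of method (Neumann for $H$, Dirichlet for $E$, same template), and the boundary integrals indeed drop out for free since $\partial_{y_k}\psi=0$ on $\{y_3=0\}$ for $k=1,2$. The price, which you correctly flag, is that you must invoke the tangential-trace version of the $L^p$ div-curl estimate to control $\nabla\wedge B$ in $W^{1,p}$ from $(\nabla\wedge B)\wedge\boldsymbol\nu=0$; this is available in the Amrouche--Seloula / Amrouche--Bernardi--Dauge framework for $C^{1,1}$ domains, but needs to be cited precisely. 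The paper's component approach avoids this dependence by never isolating a vector potential near the boundary. For the $\mathcal{C}^{1,\alpha}$ part you are right that the statement is purely interior, and your transfer of the Campanato comparison from Theorem~\ref{thm:15} (with $V\wedge\boldsymbol\nu=E\wedge\boldsymbol\nu$ on $\partial B_R$) matches the paper.
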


\begin{rem}
The result (\ref{equ:3.56E}) and (\ref{EreguW}) can be extended to the inhomogeneous case; Similar to the regularity of $H$, the result (\ref{EreguC}) only works with the homogeneous equation ($J_e=J_m=0$) case.
\end{rem}



\begin{proof}[Proof of Proposition \ref{thm:E4}]
The only difference in this case is the property of the terms on the boundary $\partial\Omega$, since $E$ and $H$ satisfy the different boundary conditions. Let us multiply both sides of the equation \eqref{eqE1} by $\overline{E}$, and integrate it by parts, we get that
$$\int_{\Omega}\mu^{-1}(x)(\nabla\wedge E)\cdot\overline{(\nabla\wedge E)}dx+\int_{\partial\Omega}(\mu^{-1}\nabla\wedge E)\cdot \overline{(E\wedge\boldsymbol{\nu})}{\rm d}\sigma=k^2\int_{\Omega}|E|^2dx.$$
Notice that $E\wedge\boldsymbol{\nu}=0$ on $\partial\Omega$, and so
$$\int_{\Omega}|\nabla\wedge E|^2dx \le C\int_{\Omega}|E|^2dx.$$
Based on the estimate of $\nabla\wedge E$, we can show the interior estimate of $|\nabla E|$ exactly as done in the {\bf Step 3} of the proof of Proposition \ref{thm:1}, where we only need to exchange the role of $\varepsilon$ and $\mu$.

Now we are going to derive the boundary estimates. Let $x_0\in\partial\Omega$ and $B_r$ be the ball with the center $x_0$ and radius $r$.
Let $\eta$ be a smooth positive cut-off function such that $\eta\equiv1$ in $B_r$ and $\eta=0$ on $\partial B_R$.Then multiply $\overline{\eta^2E}$ on the both sides of the equation \eqref{eqE1}, after an integration by part,
\begin{equation}\label{3.60x}
\int_{B_R}\mu^{-1}(x)(\nabla\wedge E)\cdot\overline{(\nabla\wedge (\eta^2 E))}dx+\int_{\partial B_R}(\mu^{-1}\nabla\wedge E)\cdot \overline{(\eta^2E\wedge\boldsymbol{\nu})}{\rm d}\sigma=k^2\int_{B_R}|\eta E|^2dx.
\end{equation}
Notice that $\eta^2 E\wedge\boldsymbol{\nu}=0$ on $\partial{B_R}$, so by employing the H\"{o}lder inequality, it is easy to have that
\begin{equation}
\int_{B_r\cap\Omega}|\nabla\wedge E|^2dx \le C(R)\int_{B_R\cap\Omega}|E|^2dx,
\end{equation}
where the constant $C(R)$ depends on $R$ but does not depend on the solutions $E$ and $H$.

Next, from the equation $\nabla\wedge{H}  +i\,\omega\varepsilon_{0}\varepsilon(x)\,{E}=0$, we have that
$$
{\rm div}(\varepsilon E)=0,
$$
with boundary condition $E\wedge\boldsymbol{\nu}=0$. As the argument in {\bf Step 2} of the proof of Proposition \ref{thm:2}, we introduce the transformation of coordinates such that $\boldsymbol{\nu}=e_3$ in the new coordinates $(y_1,y_2,y_3)$, and $\tilde{E}$ satisfies that ${\rm div}(\tilde{\varepsilon} \tilde{E})=0$, with boundary conditions $\tilde{E}\wedge e_3=0$ on the boundary $y_3=0$ in $B_R(x_0)$. The boundary condition is actually that
$$
\tilde{E}_1=\tilde{E}_2=0.
$$
We first work on the estimates of $\tilde{E}_1$, and the estimate of $\tilde{E}_2$ will be derived similarly.  By the equation,
$$
0=\partial_{y_1}({\rm div}(\tilde{\varepsilon}\tilde{E})) ={\rm div}(\partial_{y_1}(\tilde{\varepsilon})\tilde{E}+\tilde{\varepsilon}\partial_{y_1}(\tilde{E}))
$$
Note that
$$
\partial_{y_1}\tilde{E}_2=\partial_{y_2}\tilde{E}_1+(\nabla\wedge\tilde{E})_3,\qquad\mbox{and}\qquad \partial_{y_1}\tilde{E}_3=\partial_{y_3}\tilde{E}_1-(\nabla\wedge\tilde{E})_2.
$$
Let us define a vector $b$ as the following 
$$
b:=(0,(\nabla\wedge\tilde{E})_3, -(\nabla\wedge\tilde{E})_2)^T,
$$
then $\tilde{E}_1$ satisfies the following elliptic equation of second order,
$$
{\rm div}(\tilde{\varepsilon}\nabla\tilde{E}_1) =-{\rm div}(\partial_{y_1}(\tilde{\varepsilon})\tilde{E}+\tilde{\varepsilon}b),
$$
with the boundary condition $\tilde{E}_1=0$ on $\partial\Omega$.

Multiply $\overline{\eta^2\tilde{E}_1}$ on the both sides of the equation, and integrate by part in $B_R(y_0)$, where $y_0$ is the corresponding point of $x_0$ in the new coordinates, then
\begin{equation}
\int_{B_r\cap\Omega}|\nabla\tilde{E}_1|^2dy \le C(\lambda)\int_{B_R\cap\Omega}|\tilde{E}|^2|\nabla\tilde{\varepsilon}|^2{\rm d}y+ \frac{C(\lambda)}{R}\int_{B_R\cap\Omega}(|\tilde{E}|^2|\nabla\tilde{\varepsilon}|+||\tilde{\varepsilon}||_{\mathcal{L}^{\infty}}^2|b|^2)dy,
\end{equation}
where the constant $C(\lambda)$ only depends on the elliptic ratio.
Since 
$$
\int_{B_R\cap\Omega}|\tilde{E}|^2|\nabla\tilde{\varepsilon}|^2{\rm d}y\leq ||\nabla\tilde{\varepsilon}||_{\mathcal{L}^{3+\delta}}^2\left(\int_{B_R\cap\Omega}|\tilde{E}|^{\frac{6+2\delta}{1+\delta}}{\rm d}y\right)^{\frac{1+\delta}{3+\delta}},
$$
and
$$
\left(\int_{B_R\cap\Omega}|\tilde{E}|^{\frac{6+2\delta}{1+\delta}}{\rm d}y\right)^{\frac{1+\delta}{3+\delta}}\leq||\tilde{E}||_{\mathcal{L}^6(B_R\cap\Omega)}^2|B_R\cap\Omega|^{\frac{2\delta}{9+3\delta}}\leq CR^{\frac{2\delta}{3+\delta}}||\nabla \tilde{E}||_{\mathcal{L}^2(B_R\cap\Omega)}^2.
$$
Secondly,
$$
\int_{B_R\cap\Omega}|\tilde{E}|^2|\nabla\tilde{\varepsilon}|{\rm d}y\leq ||\nabla\tilde{\varepsilon}||_{\mathcal{L}^{3+\delta}}\left(\int_{B_R\cap\Omega}|\tilde{E}|^{\frac{6+2\delta}{2+\delta}}{\rm d}y\right)^{\frac{2+\delta}{3+\delta}},
$$
with
$$
\left(\int_{B_R\cap\Omega}|\tilde{E}|^{\frac{6+2\delta}{2+\delta}}{\rm d}y\right)^{\frac{2+\delta}{3+\delta}}\leq||\tilde{E}||_{\mathcal{L}^6(B_R\cap\Omega)}^2|B_R\cap\Omega|^{\frac{3+2\delta}{9+3\delta}}\leq CR^{\frac{3+2\delta}{3+\delta}}||\nabla \tilde{E}||_{\mathcal{L}^2(B_R\cap\Omega)}^2.
$$
Moreover,
$$
\int_{B_R\cap\Omega}|b|^2{\rm d}y\leq\int_{B_R\cap\Omega}|\nabla\wedge\tilde{E}|^{2}{\rm d}y.
$$
Hence by the above inequalities, we have
\begin{align*}
\int_{B_r\cap\Omega}|\nabla \tilde{E}_1|^2{\rm d}y\leq& C(\lambda)(R^{\frac{2\delta}{3+\delta}}||\nabla\tilde{\varepsilon}||^2_{\mathcal{L}^{3+\delta}}+R^{\frac{3+2\delta}{3+\delta}}||\nabla\tilde{\varepsilon}||_{\mathcal{L}^{3+\delta}})\int_{B_R\cap\Omega}|\nabla \tilde{E}|^2{\rm d}y\\
   &+C(R)||\tilde{\varepsilon}||^2_{\mathcal{L}^{\infty}}\int_{B_R\cap\Omega}|\nabla\wedge\tilde{E}|^2{\rm d}y+C(R)\int_{B_R\cap\Omega}|\tilde{E}|^2{\rm d}y.
\end{align*}
Obviously $\tilde{E}_2$ satisfies the same estimate.

Finally, we are going to estimate $\tilde{E}_3$. Note that
$$
\partial_{y_1}\tilde{E}_3=\partial_{y_3}\tilde{E}_1-(\nabla\wedge\tilde{E})_2,\qquad\mbox{and}\qquad\partial_{y_2}\tilde{E}_3=\partial_{y_3}\tilde{E}_2+(\nabla\wedge\tilde{E})_1,
$$
and by the equation ${\rm div}(\tilde{\varepsilon}\tilde{E})=0$,
$$
\tilde{\varepsilon}_{3,3}\partial_{y_3}\tilde{E}_3=(-\sum_{i,j=1}^{3}\tilde{\varepsilon}_{ij}\partial_{y_i}\tilde{E}+\tilde{\varepsilon}_{3,3}\partial_{y_3}\tilde{E}_3)-\sum_{i,j=1}^3\partial_{y_i}\tilde{\varepsilon}_{ij}\tilde{E}_j,
$$
where $\tilde{\varepsilon}_{i,j}$ is the $(i,j)^{th}$ entry of matrix $\tilde{\varepsilon}$, and $\tilde{\varepsilon}_{3,3}\geq\lambda>0$.
So
\begin{align*}
&\int_{B_r\cap\Omega}|\nabla \tilde{E}_3|^2{\rm d}y\\
\leq& C(\lambda)\int_{B_r\cap\Omega}|\tilde{E}|^2|\nabla\tilde{\varepsilon}|^2{\rm d}y+C(\lambda)||\tilde{\varepsilon}||^2_{\mathcal{L}^{\infty}}\int_{B_r\cap\Omega}(|\nabla\wedge\tilde{E}|^2+|\nabla\tilde{E}_1|^2+|\nabla\tilde{E}_2|^2){\rm d}y.
\end{align*}
Then
\begin{align*}
\int_{B_r\cap\Omega}|\nabla \tilde{E}|^2{\rm d}y\leq& C_1(R^{\frac{2\delta}{3+\delta}}+R^{\frac{3+2\delta}{3+\delta}})\int_{B_R\cap\Omega}|\nabla \tilde{E}|^2{\rm d}y\\
   &+C(R)||\tilde{\varepsilon}||^2_{\mathcal{L}^{\infty}}\int_{B_R\cap\Omega}|\nabla\wedge\tilde{E}|^2{\rm d}y+C(R)\int_{B_R\cap\Omega}|\tilde{E}|^2{\rm d}y,
\end{align*}
where the constant $C_1$ only depends on $||\tilde{\varepsilon}||_{\mathcal{W}^{1,3+\delta}}$ and the elliptic ration. 

Apply the estimates of $\nabla\wedge\tilde{E}$ which we obtained before, and after the change of the coordinates back, we have
\begin{equation}\label{3.61x}
\int_{B_r\cap\Omega}|\nabla E|^2{\rm d}x\leq C_1(R^{\frac{2\delta}{3+\delta}}+R^{\frac{3+2\delta}{3+\delta}})\int_{B_R\cap\Omega}|\nabla \tilde{E}|^2{\rm d}x+C(R)\int_{B_R\cap\Omega}|E|^2dx.
\end{equation}

Finally, let $\eta_i$ be cut-off functions which satisfies $\sum_i \eta_i\equiv 1$ and such that the set of all the subregions  $\Omega_i:=\{x\in\Omega\,;\,\eta_i(x)>0\}$ together is a finite cover of $\Omega_i$, with the property that $\rm{diam}\{\Omega_i\}\leq R$. Then
\begin{align*}
||E||_{\mathcal{H}^{1}(\Omega)}
\leq & \sum_{i}||\eta_i\nabla E||_{\mathcal{L}^2(\Omega)}+||E||_{\mathcal{L}^2(\Omega)}\\
\leq &C(R^{\frac{2\delta}{3+\delta}}+R^{\frac{3+2\delta}{3+\delta}})\sum_i||\nabla E||_{\mathcal{L}^2(\Omega_i)}+C||E||_{\mathcal{L}^2(\Omega)}\\
\leq &C(R^{\frac{2\delta}{3+\delta}}+R^{\frac{3+2\delta}{3+\delta}})||\nabla E||_{\mathcal{L}^2(\Omega)}+C||E||_{\mathcal{L}^2(\Omega)}.
\end{align*}

Therefore, let $R>0$ sufficiently small, We complete the proof of the estimate \eqref{equ:3.56E}.

Similarly, for the remaining two estimates, the interior estimates of $E$ and {\underline the boundary estimate of $\nabla\wedge E$} can be achieved by exchanging the role of $\varepsilon$ and $\mu$. The estimate of ${\rm div}E$ near the boundary $\partial\Omega$ can be deduced by using the relation ${\rm div}(\varepsilon E)=0$ carefully as done in the proof of the estimate \eqref{equ:3.56E} ({\it i.e.}, from \eqref{3.60x} to \eqref{3.61x}), then all the three estimates yield the remaining two estimates of $E$ in the proposition.
\end{proof}
Using all the results obtained before, we can now address the estimate to both $E$ and $H$ as given in Theorem \ref{them:10a}.
\begin{proof}[Proof of Theorem \ref{them:10a}]
By the embedding theorem, $\mu(x)\in \mathcal{C}^{\beta}(\Omega)$ and $\varepsilon(x)\in \mathcal{C}^{\beta}(\Omega)$. Then the theorem follows from Theorem \ref{thm:15} and Proposition \ref{thm:E4}. Moreover, the subset $\Omega_h$ is determined by considering the $\mathcal{H}^1$-estimates of the two functions $W_E$ and $W_H$ together, where $W_E:=E-V_E$ and $W_H:=H-V_H$. $V_H$ is the $V$ constructed in the proof of Theorem \ref{thm:15}, while $V_E$ is similar but with respect to $E$. Clearly, we have that $\Omega_h$ is open and $meas(\Omega\backslash\Omega_h)=0$.
\end{proof}

\section{Appendix}
In the appendix, we will introduce several important functional spaces and properties which are used in this paper. Because most of them are well-known, we only list the lemmas and omit the proof for the shortness.
\begin{defn}[Morrey Space]\label{def1}
We denote by $\mathcal{L}^{p,\nu}(\Omega)$ the Morrey spaces, and for $1 \le p < \infty $ and $\nu \in (0,n+p)$, its norm is defined as
$$||u||_{\mathcal{L}^{p,\nu}}(\Omega)=\{\sup_{x \in \Omega, 0<r<d}r^{-\nu}\int_{\Omega(x,r)}|u(z)|^p dz\}^{\frac{1}{p}}$$
where $\Omega(x,r)=\Omega \cap B(x,r)$, $B(x;r)$ is a ball of which centre is $x \in \Omega$. We call $u(x)\in \mathcal{L}^{p,\nu}(\Omega)$ if $||u||_{\mathcal{L}^{p,\nu}}(\Omega) < \infty$.
\end{defn}

\begin{defn}[Campanato space]\label{def:14}
We denote by $\mathfrak{L}^{p,\nu}(\Omega)$ the Campanato spaces, and for the same $p, \nu$ used in the definition \ref{def1}, the norm of the Campanato spaces is given as
$$||u||_{\mathfrak{L}^{p,\nu}}(\Omega)=||u||_{\mathcal{L}^{p,\nu}}(\Omega)+\{\sup_{x \in \Omega, 0<r<d}r^{-\nu}\int_{\Omega(x,r)}|u(z)-\bar{u}_{x,r}|^p dz\}^{\frac{1}{p}}$$ where $\bar{u}_{x,r}=\frac{1}{|\Omega(x,r)|}\int_{\Omega(x,r)}u(z)dz$.
\end{defn}

The Morrey space and Campanato space have the following two well-known and important properties.

\begin{lem}\label{lem:16}
$\mathfrak{L}^{p,\nu}(\Omega)\cong \mathcal{C}^{0,\delta}$, where $\delta=\frac{\nu-n}{p}$ and $n<\nu<n+p$.
\end{lem}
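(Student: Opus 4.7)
The plan is to prove the classical Campanato isomorphism by establishing equivalence of norms in both directions, using the integral averages $\bar{u}_{x,r}$ as a bridge between the oscillation seminorm on $\mathfrak{L}^{p,\nu}$ and the pointwise Hölder seminorm.

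For the embedding $\mathfrak{L}^{p,\nu}(\Omega) \hookrightarrow \mathcal{C}^{0,\delta}(\Omega)$, I would fix $x \in \Omega$ and work with a dyadic sequence of balls $B(x, r_k)$ with $r_k = 2^{-k} r$. The key estimate is
$$
|\bar{u}_{x,r_{k+1}} - \bar{u}_{x,r_k}| \leq \frac{1}{|B(x, r_{k+1})|}\int_{B(x,r_{k+1})} |u(z) - \bar{u}_{x,r_k}|\, dz,
$$
which, after applying Hölder's inequality and the Campanato seminorm estimate, gives a bound of the form $C r_k^{(\nu-n)/p} \|u\|_{\mathfrak{L}^{p,\nu}}$. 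Because $\nu > n$, this is a geometric series in $k$, so $\{\bar{u}_{x,r_k}\}_k$ is Cauchy and converges to a limit $\tilde u(x)$ which, by the Lebesgue differentiation theorem, coincides a.e. with $u$. Summing the geometric series also gives $|u(x) - \bar{u}_{x,r}| \leq C r^{\delta} \|u\|_{\mathfrak{L}^{p,\nu}}$ at Lebesgue points. To extract Hölder continuity, I would take two points $x, y$ with $|x - y| = \rho$ and introduce the intermediate ball $B := B(x, 2\rho) \cap B(y, 2\rho)$, which has measure comparable to $\rho^n$; then $|\bar{u}_{x,2\rho} - \bar{u}_{y,2\rho}|$ is controlled by $\rho^{-n}\int_B (|u - \bar{u}_{x,2\rho}| + |u - \bar{u}_{y,2\rho}|)\, dz$, which by Hölder and the Campanato condition is again $\leq C \rho^{\delta} \|u\|_{\mathfrak{L}^{p,\nu}}$. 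Combining with the two one-sided bounds finishes the Hölder estimate.

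The reverse embedding $\mathcal{C}^{0,\delta}(\Omega) \hookrightarrow \mathfrak{L}^{p,\nu}(\Omega)$ is straightforward: for $u \in \mathcal{C}^{0,\delta}(\Omega)$, the oscillation of $u$ on $B(x,r)$ is bounded by $[u]_{\mathcal{C}^{0,\delta}} r^\delta$, so
$$
\int_{\Omega(x,r)} |u(z) - \bar{u}_{x,r}|^p\, dz \leq C [u]_{\mathcal{C}^{0,\delta}}^p r^{p\delta + n} = C [u]_{\mathcal{C}^{0,\delta}}^p r^\nu,
$$
and similarly $|u|^p$ integrates against $r^n \leq C r^\nu$ on bounded $\Omega$. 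This yields the Campanato bound with an explicit constant.

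The main obstacle is the forward direction, specifically the control of $|\bar{u}_{x,2\rho} - \bar{u}_{y,2\rho}|$ between distinct base points, which requires choosing the intermediate ball carefully so that its volume is comparable to $\rho^n$ (one must use the non-degeneracy of $\Omega$ near the boundary, typically assumed in the form of an interior cone or Lipschitz condition). Once the two dyadic Cauchy estimates at $x$ and $y$ are combined with this comparison estimate, the Hölder seminorm $[\tilde u]_{\mathcal{C}^{0,\delta}}$ is dominated by $C \|u\|_{\mathfrak{L}^{p,\nu}}$, establishing the isomorphism of the two spaces with equivalent norms.
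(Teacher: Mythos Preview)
Your proposal is correct and supplies the standard classical argument for the Campanato--H\"older isomorphism. The paper itself does not prove this lemma: it is listed in the Appendix among several well-known functional-analytic facts, with the explicit remark that the proofs are omitted ``for the shortness''. So there is no approach in the paper to compare against; what you have written is precisely the textbook proof (dyadic telescoping of averages at a point, comparison of averages at two nearby points via an intermediate ball, and the trivial reverse inclusion), and it would serve as a complete justification of the stated lemma.
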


\begin{lem}
$\mathcal{L}^{p,n}(\Omega)\cong \mathcal{L}^{\infty}$.
\end{lem}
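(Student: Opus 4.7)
The plan is to establish the isomorphism by proving the two continuous embeddings $\mathcal{L}^{\infty}(\Omega) \hookrightarrow \mathcal{L}^{p,n}(\Omega)$ and $\mathcal{L}^{p,n}(\Omega) \hookrightarrow \mathcal{L}^{\infty}(\Omega)$, which together yield the claimed equivalence of norms. Since both inclusions are linear, continuous embeddings with two-sided norm estimates suffice.

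For the first embedding, I would simply use the trivial volume bound. Given $u \in \mathcal{L}^{\infty}(\Omega)$ and any $x \in \overline{\Omega}$, $r > 0$,
$$
r^{-n} \int_{\Omega(x,r)} |u(z)|^{p} \, dz \leq r^{-n} |\Omega(x,r)| \, \|u\|_{\mathcal{L}^{\infty}(\Omega)}^{p} \leq C \|u\|_{\mathcal{L}^{\infty}(\Omega)}^{p},
$$
where $C$ depends only on the dimension $n$. Taking the supremum over $x$ and $r$ gives $\|u\|_{\mathcal{L}^{p,n}(\Omega)} \leq C \|u\|_{\mathcal{L}^{\infty}(\Omega)}$.

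For the reverse embedding, the key tool is the Lebesgue differentiation theorem. Any $u \in \mathcal{L}^{p,n}(\Omega)$ lies in $\mathcal{L}^{p}_{\mathrm{loc}}(\Omega)$, so for almost every $x \in \Omega$,
$$
|u(x)|^{p} = \lim_{r \to 0^{+}} \frac{1}{|\Omega(x,r)|} \int_{\Omega(x,r)} |u(z)|^{p} \, dz.
$$
Because $\partial \Omega$ is of class $C^{1,1}$, there is a constant $c > 0$ such that $|\Omega(x,r)| \geq c\, r^{n}$ uniformly for $x \in \Omega$ and all sufficiently small $r > 0$. Combining this density bound with the Morrey bound yields
$$
|u(x)|^{p} \leq c^{-1} \limsup_{r \to 0^{+}} r^{-n} \int_{\Omega(x,r)} |u(z)|^{p} \, dz \leq c^{-1} \|u\|_{\mathcal{L}^{p,n}(\Omega)}^{p}
$$
at almost every $x \in \Omega$, whence $\|u\|_{\mathcal{L}^{\infty}(\Omega)} \leq C \|u\|_{\mathcal{L}^{p,n}(\Omega)}$.

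The only delicate point is the uniform measure lower bound $|\Omega(x,r)| \geq c\, r^{n}$ for $x$ near $\partial \Omega$, since the Lebesgue differentiation argument would fail if density collapsed at some boundary point; fortunately this rests on the interior cone condition, which is guaranteed by the $C^{1,1}$ regularity of $\partial \Omega$ assumed throughout the paper. Everything else is a routine application of the differentiation theorem and the definition of the Morrey norm.
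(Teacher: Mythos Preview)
Your argument is correct. The paper does not actually prove this lemma: in the Appendix it is listed among several well-known facts about Morrey and Campanato spaces with the explicit disclaimer that ``we only list the lemmas and omit the proof for the shortness.'' Your two-sided embedding proof via the trivial volume bound and the Lebesgue differentiation theorem is the standard route, and what you have written is sound.

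One minor remark: the uniform interior cone condition you invoke for the lower density bound $|\Omega(x,r)|\geq c\,r^n$ near $\partial\Omega$ is not really needed for the reverse inclusion. Since you apply Lebesgue differentiation at almost every \emph{interior} point $x\in\Omega$, for each such fixed $x$ one has $\Omega(x,r)=B_r(x)$ once $r<\operatorname{dist}(x,\partial\Omega)$, so $|\Omega(x,r)|=\omega_n r^n$ in the limit and no uniformity in $x$ is required. The cone condition does matter for other Morrey--Campanato identifications in the paper (e.g.\ Lemma~\ref{lem:16} and Lemma~\ref{lem:24a}), but for this particular lemma it is superfluous.
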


\begin{lem}\label{lem:24a}
$\mathcal{L}^{p,\nu}(\Omega)\cong \mathfrak{L}^{p,\nu}$, for $0\leq\nu<n$.
\end{lem}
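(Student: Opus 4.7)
The plan is to prove this isomorphism by establishing that the two norms are equivalent. One direction is immediate from the definitions: Definition \ref{def:14} defines the Campanato norm as $\|u\|_{\mathfrak{L}^{p,\nu}} = \|u\|_{\mathcal{L}^{p,\nu}} + (\text{oscillation term})$, so the inequality $\|u\|_{\mathcal{L}^{p,\nu}} \le \|u\|_{\mathfrak{L}^{p,\nu}}$ is built into the definition and yields $\mathfrak{L}^{p,\nu} \hookrightarrow \mathcal{L}^{p,\nu}$ with no restriction on $\nu$. The substantive direction is the reverse inclusion: I must show that if $u \in \mathcal{L}^{p,\nu}$, then the oscillation term is finite and is quantitatively controlled by the Morrey norm.

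To accomplish this I would combine the triangle inequality with Jensen's inequality. For any constant $c$ we have $|u(z) - \bar{u}_{x,r}| \le |u(z) - c| + |c - \bar{u}_{x,r}|$, and Jensen applied to $\bar{u}_{x,r} = \frac{1}{|\Omega(x,r)|}\int_{\Omega(x,r)} u\, dz$ gives $|c - \bar{u}_{x,r}|^p \le \frac{1}{|\Omega(x,r)|}\int_{\Omega(x,r)} |u - c|^p\, dz$. Combining these using $|a+b|^p \le 2^{p-1}(|a|^p + |b|^p)$ and integrating yields the key inequality
$$\int_{\Omega(x,r)} |u(z) - \bar{u}_{x,r}|^p\, dz \le 2^p \int_{\Omega(x,r)} |u(z) - c|^p\, dz$$
valid for every constant $c$. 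Specializing to $c = 0$ and invoking the Morrey bound $\int_{\Omega(x,r)} |u|^p\, dz \le r^\nu \|u\|_{\mathcal{L}^{p,\nu}}^p$, I then divide by $r^\nu$ and take the supremum over $x \in \Omega$ and $0 < r < d$ to conclude that the oscillation part of $\|u\|_{\mathfrak{L}^{p,\nu}}$ is bounded by $2 \|u\|_{\mathcal{L}^{p,\nu}}$. Adding back the Morrey summand gives $\|u\|_{\mathfrak{L}^{p,\nu}} \le 3 \|u\|_{\mathcal{L}^{p,\nu}}$ and finishes the equivalence.

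I do not anticipate a serious obstacle: the argument reduces to the triangle-plus-Jensen computation above, with no subtlety beyond the choice $c = 0$. The restriction $0 \le \nu < n$ is, in fact, not explicitly invoked in this particular estimate, but it is the natural regime for the statement, since the companion lemma in the appendix identifies $\mathcal{L}^{p,n}$ with $\mathcal{L}^\infty$, while Lemma \ref{lem:16} identifies $\mathfrak{L}^{p,\nu}$ with a H\"older space when $n < \nu < n+p$; the range $0 \le \nu < n$ is therefore exactly where Morrey and Campanato spaces have matching scaling and neither degenerates into a sup-norm or H\"older-norm description. The only minor technical point is that the volume bound $|\Omega(x,r)| \le c\, r^n$ used implicitly must hold uniformly for $x$ near $\partial\Omega$, which follows from the standing $C^{1,1}$ assumption on $\partial\Omega$ and does not affect the structure of the argument.
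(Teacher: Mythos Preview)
Your argument is correct. The paper does not supply its own proof of this lemma: it is listed in the Appendix among several standard facts whose proofs are explicitly omitted ``for the shortness.'' Given the paper's definition of the Campanato norm in Definition~\ref{def:14}, which already contains the Morrey norm as a summand, your triangle-plus-Jensen estimate with $c=0$ is the natural and complete route to the norm equivalence; your observation that the range $0\le\nu<n$ is not actually used under this particular definition is also accurate.
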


\begin{defn}[BMO space]
We denote by $\operatorname{BMO}(\Omega)$ the bounded mean oscillation function space, and its norm is defined as
$$||u||_{\operatorname{BMO}(\Omega)}=||u||_{\mathcal{L}^1(\Omega)}+\sup_{Q\subset\Omega}\frac{1}{|Q|}\int_{Q}|u-\bar{u}_Q| dx $$
where $Q$ is a hypercube in $\Omega$ and $|Q|$ denotes its volume. We call $u \in \operatorname{BMO}(\Omega) $ if $||u||_{\operatorname{BMO}(\Omega)}<\infty$.
\end{defn}

\begin{lem}
For any open bounded domain $\Omega$, we have that $||u||_{\operatorname{BMO}(\Omega)}\le ||u||_{\mathfrak{L}^{2,n}(\Omega)}$.
\end{lem}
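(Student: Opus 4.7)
The plan is to split the BMO norm as written — the $\mathcal{L}^{1}$ part plus the mean oscillation supremum — and bound each piece separately by the Campanato norm $\|u\|_{\mathfrak{L}^{2,n}(\Omega)} = \|u\|_{\mathcal{L}^{2,n}(\Omega)} + [u]_{\mathfrak{L}^{2,n}(\Omega)}$, where $[u]_{\mathfrak{L}^{2,n}}$ denotes the seminorm involving the local means $\bar{u}_{x,r}$. Throughout I will freely use the fact that every cube $Q\subset\Omega$ of diameter $d_Q$ is contained in a ball $B(x_Q,r)$ with $r=d_Q$ centered at the center $x_Q$ of $Q$, and that $|Q|$ and $r^n$ are comparable up to a fixed dimensional constant that gets absorbed into the constant hidden in the statement.

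First I would handle the $\mathcal{L}^{1}$ part. By Hölder's inequality and the definition of the Morrey norm, for any $x\in\Omega$ and admissible $r$,
\begin{equation*}
|\bar u_{x,r}| \le \frac{1}{|\Omega(x,r)|}\Bigl(\int_{\Omega(x,r)}|u|^2\,dz\Bigr)^{1/2}|\Omega(x,r)|^{1/2} \le C\,r^{-n/2}\cdot r^{n/2}\,\|u\|_{\mathcal{L}^{2,n}(\Omega)} = C\,\|u\|_{\mathcal{L}^{2,n}(\Omega)}.
\end{equation*}
Letting $r\to 0$ and invoking Lebesgue differentiation yields $|u(x)|\le C\,\|u\|_{\mathcal{L}^{2,n}(\Omega)}$ for a.e.\ $x$, which is precisely the embedding $\mathcal{L}^{2,n}(\Omega)\hookrightarrow \mathcal{L}^\infty(\Omega)$ already cited in the preceding lemma. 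Since $\Omega$ is bounded, $\|u\|_{\mathcal{L}^1(\Omega)} \le |\Omega|\,\|u\|_{\mathcal{L}^\infty(\Omega)} \le C\,\|u\|_{\mathcal{L}^{2,n}(\Omega)} \le C\,\|u\|_{\mathfrak{L}^{2,n}(\Omega)}$.

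Next I would handle the oscillation supremum. Fix a cube $Q\subset\Omega$ with center $x_Q$, diameter $d_Q$, and set $r:=d_Q$, so that $Q\subset\Omega(x_Q,r)$. By Cauchy–Schwarz,
\begin{equation*}
\frac{1}{|Q|}\int_Q|u-\bar u_Q|\,dx \le \frac{1}{|Q|^{1/2}}\Bigl(\int_Q|u-\bar u_Q|^2\,dx\Bigr)^{1/2}.
\end{equation*}
Since $\bar u_Q$ is the $\mathcal{L}^{2}$-best constant approximation of $u$ on $Q$, replacing it by $\bar u_{x_Q,r}$ only enlarges the integral, and enlarging the domain of integration from $Q$ to $\Omega(x_Q,r)$ enlarges it further:
\begin{equation*}
\int_Q |u-\bar u_Q|^2\,dx \le \int_{\Omega(x_Q,r)}|u-\bar u_{x_Q,r}|^2\,dx \le r^{n}\,[u]_{\mathfrak{L}^{2,n}(\Omega)}^{2}.
\end{equation*}
Using $|Q|\ge c_n\,r^n$ gives $\frac{1}{|Q|}\int_Q|u-\bar u_Q|\,dx\le C\,[u]_{\mathfrak{L}^{2,n}(\Omega)}$, and taking the supremum over cubes $Q\subset\Omega$ bounds the oscillation part by the Campanato seminorm, hence by $\|u\|_{\mathfrak{L}^{2,n}(\Omega)}$.

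Combining the two estimates gives the desired inequality (the constant, as usual in these embedding statements, is absorbed into the meaning of $\le$). There is no genuine obstacle here: the only mildly subtle point is the cube-versus-ball bookkeeping, which is resolved by choosing the ball concentric with the cube and of radius equal to the cube's diameter, so that the cube is contained in $\Omega(x_Q,r)$ and the two volume scales differ only by a dimensional factor.
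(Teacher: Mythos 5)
Your proof is correct. The paper states this lemma in the appendix without proof (it is listed among ``well-known'' facts), so there is no argument of the authors' to compare against; what you give is the standard one, and both halves are sound --- the $\mathcal{L}^1$ part via the Morrey embedding $\mathcal{L}^{2,n}\hookrightarrow\mathcal{L}^\infty$ from the preceding lemma, and the oscillation part via Cauchy--Schwarz, the $\mathcal{L}^2$-minimality of the mean, and the comparability $|Q|\sim d_Q^{\,n}$. The only point worth flagging is that your first displayed inequality, as stated ``for any admissible $r$,'' implicitly uses $|\Omega(x,r)|\geq c\,r^n$, which can fail near the boundary of a general open bounded domain; but since you only use it in the limit $r\to 0$ for a fixed interior point $x$, where $\Omega(x,r)=B(x,r)$ eventually, the conclusion is unaffected.
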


By employing the BMO space, we have the following interpolation theorem.

\begin{lem}[Stampacchia interpolation theorem]
Let $1<q<+\infty$, and $T$ be a linear operator.
If $$\begin{aligned}
||Tu||_{\mathcal{L}^q(\Omega)}\le& C_1||u||_{\mathcal{L}^q(\Omega)}, \qquad \forall u \in \mathcal{L}^q(\Omega),\\
||Tu||_{\operatorname{BMO}(\Omega)}\le& C_2||u||_{\mathcal{L}^{\infty}(\Omega)}, \qquad \forall u \in \mathcal{L}^{\infty}(\Omega).
\end{aligned}
$$
Then for $p\in [q,+\infty)$, $||Tu||_{\mathcal{L}^p(\Omega)}\le C||u||_{\mathcal{L}^p}(\Omega)$.
\end{lem}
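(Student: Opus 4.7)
The plan is to prove the $L^p$ bound by estimating the distribution function $\lambda_{Tu}(s) := |\{x \in \Omega : |Tu(x)| > s\}|$ and then invoking the layer-cake identity $\|Tu\|_{L^p}^p = p\int_0^\infty s^{p-1}\lambda_{Tu}(s)\,ds$. Because $\Omega$ is bounded, $L^p(\Omega) \hookrightarrow L^q(\Omega)$ whenever $p \ge q$, so the hypothesis $\|Tu\|_{L^q} \le C_1\|u\|_{L^q}$ is meaningful for every $u \in L^p(\Omega)$. The strategy is the classical Calder\'on--Zygmund splitting of $u$ at a level tied to $s$, sending the bounded piece through the BMO endpoint and the highly concentrated piece through the $L^q$ endpoint.

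\textbf{Main steps.} First I would extend $u$ by zero to a large cube $Q_0\supset\Omega$ and perform the Calder\'on--Zygmund decomposition of $|u|^q$ at level $s^q$, producing disjoint dyadic cubes $\{Q_i\}$ with $s^q < |Q_i|^{-1}\int_{Q_i}|u|^q \le 2^n s^q$ and $|u|\le s$ a.e.\ off $\bigcup_i Q_i$, and writing $u = g_s+b_s$ in the standard way (the good part $g_s$ equals $u$ outside $\bigcup Q_i$ and equals the mean of $u$ on each $Q_i$). Then $\|g_s\|_{L^\infty} \le Cs$, $\|b_s\|_{L^q}^q \le C\int_{\bigcup Q_i}|u|^q$, and $|\bigcup Q_i| \le s^{-q}\|u\|_{L^q}^q$. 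By linearity $Tu = Tg_s + Tb_s$. The BMO hypothesis gives $\|Tg_s\|_{\mathrm{BMO}} \le C_2\|g_s\|_{L^\infty} \le Cs$, and the John--Nirenberg inequality converts this into an exponential distribution function estimate of the form $|\{|Tg_s - c|>s\}| \le Ce^{-c'/C_2}|\Omega|$ for a suitable centering constant $c$; the $L^q$ hypothesis applied to $b_s$ together with Chebyshev gives $|\{|Tb_s|>s\}| \le Cs^{-q}\|b_s\|_{L^q}^q \le Cs^{-q}\int_{\{|u|>s\}}|u|^q$.

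\textbf{Conclusion.} Combining these bounds via the inclusion $\{|Tu|>2s\} \subset \{|Tg_s-c|>s\}\cup\{|Tb_s - c'|>s\}$, multiplying by $s^{p-1}$, integrating in $s$, and applying Fubini (a layer-cake estimate on the right-hand side, using $p>q$) reduces the problem to
$$\int_0^\infty s^{p-q-1}\!\int_{\{|u|>s\}}|u|^q\,dx\,ds \le \frac{1}{p-q}\|u\|_{L^p}^p,$$
which is elementary. The boundary case $p=q$ is the first hypothesis itself.

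\textbf{Main obstacle.} The real difficulty is that BMO supplies only oscillation control, not pointwise size, so $Tg_s$ is not pointwise dominated by $\|g_s\|_{L^\infty}$. The role of John--Nirenberg is precisely to upgrade the BMO estimate into an exponential distribution function bound strong enough that its contribution can be absorbed against $s^{p-1}$ for every finite $p$; getting the constants to match up and the centering constant $c$ to be chosen consistently across scales is the delicate bookkeeping. A minor technical nuisance is that Calder\'on--Zygmund cubes may cross $\partial\Omega$, which is handled by the zero extension to $Q_0$ at the very start.
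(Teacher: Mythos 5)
The paper itself offers no proof of this lemma --- it is stated in the appendix with the remark that proofs of the listed facts are omitted --- so there is nothing to compare your argument against; it has to stand on its own, and as written it does not.

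The fatal step is the treatment of the good part. After the Calder\'on--Zygmund splitting at level $s$ you have $\|g_s\|_{L^\infty}\le Cs$, hence $\|Tg_s\|_{\operatorname{BMO}}\le C_2Cs$, and John--Nirenberg at height $s$ then yields exactly what you wrote: $|\{|Tg_s-c|>s\}|\le Ce^{-c'/(CC_2)}|\Omega|$. The exponent contains no $s$, because the BMO seminorm of $Tg_s$ scales \emph{linearly} in $s$ while you are testing at height $s$; the two cancel. So this bound is a constant multiple of $|\Omega|$, uniform in $s$, and $\int_0^\infty s^{p-1}\cdot\mathrm{const}\,ds$ diverges. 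This cannot be repaired by decomposing at level $\gamma s$ with $\gamma$ small (you only shrink the constant, not introduce decay), nor by mixing in the $L^q$ bound on $Tg_s$ (that only recovers an $s^{-q}$ tail, and $\int^\infty s^{p-q-1}\,ds$ still diverges for $p>q$). The underlying obstruction is structural: a function whose BMO norm is comparable to $s$ genuinely can exceed $s$ on a fixed fraction of $\Omega$ for every $s$, so no distributional estimate of $Tg_s$ at its own height can supply the decay you need. In the Marcinkiewicz argument the good part contributes \emph{zero} because $L^\infty\to L^\infty$ boundedness kills the level set outright; replacing the target by BMO destroys exactly that step, and this is the whole difficulty of Stampacchia's theorem, not a bookkeeping issue about centering constants.

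The missing idea is that the smallness for the ``good'' contribution must be measured relative to a set that itself shrinks in $s$, not relative to $\Omega$. The standard routes are: (i) the Fefferman--Stein good-$\lambda$ inequality $|\{M_d(Tu)>2s,\ (Tu)^{\#}\le\gamma s\}|\le C\gamma\,|\{M_d(Tu)>s\}|$, obtained by running the Calder\'on--Zygmund decomposition on $Tu$ (not on $u$) and using the oscillation bound cube by cube on the resulting maximal cubes --- here the right-hand side decays in $s$ because the level set of $M_d(Tu)$ does --- followed by $\|Tu\|_{L^p}\lesssim\|(Tu)^{\#}\|_{L^p}+\|Tu\|_{L^q}$; or (ii) complex interpolation using the identification $[L^q,\operatorname{BMO}]_\theta=L^p$. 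A secondary, repairable issue: your bound $\|b_s\|_{L^q}^q\le C\int_{\{|u|>s\}}|u|^q$ is not what the CZ cubes give (they give control on $\{M(|u|^q)>s^q\}$); the clean splitting for the bad part is simply $b_s=u\chi_{\{|u|>s\}}$, with no cubes at all. But fixing that does not touch the main gap.
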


The final one is the Helmholtz decomposition in the vector analysis.

\begin{lem}[Helmholtz Decomposition] \label{lem16} Suppose $\Omega$ is a bounded, simply-connected, Lipschitz domain. Every square-integrable vector field $u\in(\mathcal{L}^2(\Omega))^3$ has an orthogonal decomposition:
$$
u=\nabla\varphi+\nabla\wedge A,
$$
where $\varphi\in \mathcal{H}^1(\Omega)$ is a scalar function, and $A\in \mathcal{H}^1(curl,\Omega)$.
\end{lem}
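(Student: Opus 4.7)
The plan is to produce $\varphi$ by a variational argument, which automatically forces $u-\nabla\varphi$ to be divergence-free with vanishing normal trace, and then to invoke an existence theorem for vector potentials on simply-connected Lipschitz domains to produce $A$.

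First I would construct $\varphi$ by direct minimization. On the quotient $\mathcal{H}^1(\Omega)/\mathbb{R}$, which carries the equivalent norm $\phi\mapsto\|\nabla\phi\|_{\mathcal{L}^2}$ by the Poincar\'e--Wirtinger inequality, consider the functional
\[
J(\phi):=\tfrac{1}{2}\int_\Omega|\nabla\phi|^2\,dx-\int_\Omega u\cdot\nabla\phi\,dx.
\]
It is strictly convex and coercive, so it admits a unique minimizer $\varphi\in\mathcal{H}^1(\Omega)/\mathbb{R}$; equivalently one could apply Lax--Milgram to the bilinear form $(\phi,\psi)\mapsto\int_\Omega\nabla\phi\cdot\nabla\psi\,dx$ against the continuous linear functional $\psi\mapsto\int_\Omega u\cdot\nabla\psi\,dx$. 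The Euler--Lagrange identity $\int_\Omega\nabla\varphi\cdot\nabla\phi\,dx=\int_\Omega u\cdot\nabla\phi\,dx$ for every $\phi\in\mathcal{H}^1(\Omega)$ is precisely the statement that $w:=u-\nabla\varphi\in\mathcal{L}^2(\Omega)^3$ is weakly divergence-free with vanishing normal trace $w\cdot\boldsymbol{\nu}=0$ in $\mathcal{H}^{-1/2}(\partial\Omega)$ (the trace makes sense since $w\in\mathcal{H}(\textrm{div},\Omega)$).

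Next I would produce $A$ from this divergence-free remainder. Because $\Omega$ is bounded, simply-connected and Lipschitz, the vector-potential theorem of Amrouche--Seloula \cite{acsn} and Amrouche--Bernardi--Costabel--Dauge \cite{accdm}, which the paper itself already relies on, yields $A\in\mathcal{H}^1(\Omega)^3$ with $\nabla\wedge A=w$ and $\|A\|_{\mathcal{H}^1(\Omega)}\le C\|w\|_{\mathcal{L}^2(\Omega)}$; in particular $A\in\mathcal{H}(\textrm{curl},\Omega)$. The orthogonality $\int_\Omega\nabla\varphi\cdot(\nabla\wedge A)\,dx=0$ is then immediate by integration by parts, using $\textrm{div}(\nabla\wedge A)=0$ in the interior together with the surface-divergence identity $(\nabla\wedge A)\cdot\boldsymbol{\nu}=\textrm{div}_{\partial\Omega}(A\wedge\boldsymbol{\nu})$ and the tangential gauge $A\wedge\boldsymbol{\nu}=0$ supplied by the cited construction.

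The main obstacle is the vector-potential step, which is exactly where the simply-connectedness of $\Omega$ enters. On a multiply-connected $\Omega$, the space of divergence-free $\mathcal{L}^2$ fields with zero normal trace contains a nontrivial finite-dimensional harmonic cohomology that is not in the range of curl, and one would first have to subtract off a harmonic Neumann field. By contrast, the variational construction of $\varphi$ and the verification of orthogonality are routine, so the geometric content of the lemma is fully packaged in the quoted vector-potential existence theorem.
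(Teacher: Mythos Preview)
The paper does not actually prove this lemma; the appendix explicitly states that the listed results are well-known and that the proofs are omitted for brevity, and in the body the authors simply refer the reader to Amrouche--Seloula \cite{acsn} and Amrouche--Bernardi--Dauge \cite{accdm} for details. So there is no ``paper's own proof'' to compare against.

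Your proposal is correct and is one of the standard routes to the Helmholtz decomposition: solve the Neumann problem for $\varphi$ variationally so that $w=u-\nabla\varphi$ is divergence-free with vanishing normal trace, then invoke the vector-potential existence theorem on simply-connected Lipschitz domains to write $w=\nabla\wedge A$. One minor remark: your justification of orthogonality is slightly roundabout. You do not need the surface-divergence identity or the tangential gauge $A\wedge\boldsymbol{\nu}=0$; orthogonality is already built into the Euler--Lagrange equation you derived, since $\int_\Omega w\cdot\nabla\psi\,dx=0$ for every $\psi\in\mathcal{H}^1(\Omega)$, and in particular for $\psi=\varphi$. Apart from that cosmetic point, the argument is sound, and your observation that simple-connectedness is exactly what eliminates the harmonic Neumann fields obstructing the curl-representation is the right diagnosis of where the topology enters.
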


\begin{acknowledgements}
The authors would like to thank Prof. Yves Capdeboscq for the helpful discussion. 
\end{acknowledgements}



\end{document}